\newcommand{\CC}{\mathbb{C}}
\newcommand{\NN}{\mathbb{N}}
\newcommand{\PP}{\mathbb{P}}
\newcommand{\XX}{\mathbb{X}}
\newcommand{\AAA}{\mathbb{A}}
\newcommand{\HF}{\mathrm{HF}}
\newcommand{\caL}{\mathcal{L}}
\newcommand{\caS}{\mathcal{S}}
\theoremstyle{plain}
\newtheorem{theorem}{Theorem}[section]
\newtheorem{proposition}[theorem]{Proposition}   
\newtheorem{corollary}[theorem]{Corollary}        
\newtheorem{lemma}[theorem]{Lemma}
\theoremstyle{definition}              
\newtheorem{definition}[theorem]{Definition}
\newtheorem{example}[theorem]{Example}
\theoremstyle{remark}
\newtheorem*{notation}{{\it Notation}}
\newtheorem{remark}[theorem]{Remark}
\title{On the Hilbert function of general fat points in $\PP^1 \times \PP^1$}
\author[E. Carlini]{Enrico Carlini}
\address[E. Carlini]{DISMA Department of Mathematical Sciences, Politecnico di Torino, Turin, Italy}
\email{enrico.carlini@polito.it}
\author[M. V. Catalisano]{Maria Virginia Catalisano}
\address[M. V. Catalisano]{Dipartimento di Ingegneria Meccanica, Energetica, Gestionale e dei Trasporti, Universit\`a degli studi di Genova, Genoa, Italy}
\email{catalisano@diptem.unige.it}
\author[A. Oneto]{Alessandro Oneto}
\address[A. Oneto]{INRIA Sophia Antipolis M\'editerran\'ee (team Aromath), Sophia Antipolis, France}
\email{alessandro.oneto@inria.fr}
\keywords{Hilbert functions, fat points, linear systems, Horace method} 
\subjclass[2010]{Primary 14C20, 15A69 \; Secondary 13A02, 13D02, 14N15}
\begin{document}
\maketitle

\begin{abstract}
 We study the bi-graded Hilbert function of ideals of general fat points with same multiplicity in $\PP^1\times \PP^1$. Our first tool is the multiprojective-affine-projective method introduced by the second author in previous works with A.V.~Geramita and A.~Gimigliano where they solved the case of double points. In this way, we compute the Hilbert function when the smallest entry of the bi-degree is at most the multiplicity of the points. Our second tool is the differential Horace method introduced by J.~Alexander and A.~Hirschowitz to study the Hilbert function of sets of fat points in standard projective spaces. In this way, we compute the entire bi-graded Hilbert function in the case of triple points.
\end{abstract}

\section{Introduction}
Problems regarding polynomial interpolation are very classical and have been studied since the beginning of last century. In the classic case, we consider a set of points on the projective plane and we want to compute the dimension of the \textcolor{black}{linear system} of curves of given degree passing through the set of points with prescribed multiplicity. Sometimes, the linear conditions imposed by the the multiple points on the space of curves of given degree are not always independent. In these cases, we say that we have {\it unexpected curves}. In the case of less than $8$ points, these cases were explained by G.~Castelnuovo \cite{Ca91} and, more recently, in the work of M.~Nagata \cite{Na60}. In general, the situation is completely described by the famous {\it SHGH Conjecture} which takes the name after the works of B.~Segre \cite{Segre61}, B.~Harbourne \cite{Harbourne85}, A.~Gimigliano \cite{Gimigliano88} and A.~Hirschowitz \cite{Hirschowitz89}.

In the language of modern commutative algebra, this means to compute the Hilbert function of the ideal of {\it fat points} with prescribed multiplicity and look at its Hilbert function in a given degree. By parameter count, we expect that such a dimension is the difference between the dimension of the space of curves of the fixed degree and the \textcolor{black}{number of} conditions imposed by the fat points. If this is the \textcolor{black}{actual} dimension, we say that the set of fat points impose {\it independent conditions} on the linear system of curves of the fixed degree.

In this paper, we want to consider a multi-graded interpolation problem. We consider ideals of fat points in $\PP^1\times\PP^1$ with support in general position and we look at its bi-graded Hilbert function. The case of double points has been settled by the second author together with A.V.~Geramita and A.~Gimigliano \cite{CGG05}. They introduced a method called {\it multiprojective-affine-projective method} which allows to reduce the multi-graded problem to a question in the standard projective plane.

A milestone for polynomial interpolation problems is the work \textcolor{black}{by} J.~Alexander and A.~Hirschowitz \textcolor{black}{in which the authors considered} ideals of double points in general position in the standard $n$-dimensional projective spaces. Exceptional cases where ideals of double points fail to give independent conditions on hypersurfaces of some degree were known since the beginning of last century, but we had to wait until 1995 for a complete classification which is now the so-called Alexander-Hirschowitz Theorem. The complete proof came after a series of enlightening papers where they developed a completely new method of approach called {\it m\'ethode d'Horace diff\'erentielle} \cite{AHa,AHb,AH,AH00}. 

\textcolor{black}{In our computations, we use the multiprojective-affine-projective method of \cite{CGG05} to reduce the problem to the study of fat points in $\PP^2$, where we use the differential Horace method of \cite{AH00}.}

\subsubsection*{Formulation of the problem and main results.}
\noindent Let $S = \CC[x_0,x_1;y_0,y_1] = \bigoplus_{i,j} S_{i,j}$ be the bi-graded coordinate ring of $\PP^1\times\PP^1$, namely $S_{i,j}$ is the vector space of bi-homogeneous polynomials of bi-degree $(i,j)$.

\begin{definition}
Let $\{P_1,\ldots,P_s\}$ be a set of points in $\PP^1\times\PP^1$. We will always assume, except when explicitly mentioned, that they are in general position. Let $\wp_i\subset S$ be the prime ideal defining the point $P_i$, respectively. The {\it scheme of fat points} of {\it multiplicity $m\geq 1$} with {\it support} at the $P_i$'s is the $0$-dimensional scheme $\XX$ defined by the ideal $I_{\XX,\PP^1\times\PP^1} = \wp_1^m \cap \ldots \cap \wp_s^m$. \textcolor{black}{If there is no ambiguity, we simply denote it by $I_{\XX}$.}
\end{definition}

For any bi-homogeneous ideal $I$ in $S$, we define the {\it Hilbert function} of $S/I$ as
$$
 \HF_{S/I}(a,b) := \dim_{\CC}(S/I)_{(a,b)} = \dim_\CC S_{(a,b)} - \dim_{\CC} I_{(a,b)}, \text{ for } (a,b) \in \NN^2.
$$
For short, we denote by $\HF_{\XX}$ the Hilbert function of the quotient ring $S/I_{\XX}$.
\begin{center}
\medskip
{\sc Question.} {\it Let $\XX$ be a scheme of fat points of multiplicity $m$ in $\PP^1\times \PP^1$. 

What is the bi-graded Hilbert function of $\XX$?}
\medskip
\end{center}

It is well-known that if $a,b \gg 0$, then the Hilbert function stabilize and is equal to degree of the scheme $\XX$, i.e., if $\XX$ is a scheme of $s$ fat points of multiplicity $m$, $\HF_{\XX}(a,b) = s{m+1 \choose 2}$. 

 In this paper, we study the Hilbert function in the case of {\it general} fat points of multiplicity $m$ in $\PP^1 \times \PP^1$. E. Guardo and A. Van Tuyl give a bound for the region \textcolor{black}{where} the Hilbert function of {\it general} fat points \textcolor{black}{becomes constant}, see \cite{GVT}. Our first result is \textcolor{black}{Theorem \ref{thm:main_any_m}}, where we compute the Hilbert function for {\it low} bi-degrees, namely for bi-degrees $(a,b)$ such that $\min\{a,b\} \leq m$. 

\setcounter{theorem}{9}
\setcounter{section}{3}
  \begin{theorem}\label{thm:main_any_m}
  Let $a \geq b$ and assume $m \geq b$. Let $\XX = mP_1 + \ldots + mP_s \subset \PP^1 \times\PP^1$. Then, 
 $$
  \HF_\XX(a,b) = \min\left\{(a+1)(b+1),
  	s{m+1 \choose 2} - s{m-b \choose 2}\right\},
 $$ 
 except if $s = 2k+1$ and $a = bk+c+s(m-b)$, with $c = 0,\ldots,b-2$, where 
 $$
 	\HF_\XX(a,b) = (a+1)(b+1) - {c+2 \choose 2}.
 $$
 \end{theorem}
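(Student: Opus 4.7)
The plan is to compute $\HF_\XX(a,b)$ by reducing the bi-graded interpolation problem to a single-graded problem on $\PP^2$ and then inducting via the differential Horace method. First, I would apply the multiprojective-affine-projective method of \cite{CGG05} to identify $\dim_\CC (I_\XX)_{(a,b)}$ with $\dim_\CC (I_\mathbb{Y})_{a+b}$, where $\mathbb{Y} \subset \PP^2$ is the union of $s$ general fat points of multiplicity $m$ together with two fixed fat points of multiplicities $a$ and $b$ lying on a chosen line $L$. It then suffices to compute this latter dimension; a direct binomial calculation shows that $\binom{a+b+2}{2} - \binom{a+1}{2} - \binom{b+1}{2} = (a+1)(b+1)$, so the reduction is numerically consistent with the claimed formula.

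Second, I would induct on $b$, using degeneration onto $L$. The base case $b = 0$ reduces to a question about $s$ general fat points of multiplicity $m$ on $\PP^1$, whose Hilbert function is classically $\min\{a+1, sm\}$, matching the formula. For the inductive step, I specialize a suitable number of the general multiplicity-$m$ points onto $L$ and apply the differential Horace lemma of \cite{AH00}: each specialized point decomposes into a trace supported on $L$ and a residue supported off $L$. The residue is a smaller instance of the same $\PP^2$ problem, handled by the induction, while the trace reduces to an elementary computation on $L \cong \PP^1$. When $s = 2k$ is even, the specialized points pair up cleanly and the induction delivers exactly the expected minimum of $(a+1)(b+1)$ and $s\binom{m+1}{2} - s\binom{m-b}{2}$.

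For $s = 2k+1$, after pairing off $2k$ of the general points onto $L$, a single multiplicity-$m$ point remains unpaired, and its contribution to the trace on $L$ introduces a defect visible in the Hilbert function precisely when $a = bk + c + s(m-b)$ with $c = 0, \ldots, b-2$. I would track this unpaired point through the differential Horace specialization and identify the resulting defect with the dimension $\binom{c+2}{2}$ of a specific graded piece of a residual linear system on $L$. The differential (as opposed to classical) version of Horace's method is essential here, because only it permits the half-integer balancing of multiplicities forced by odd $s$; classical Horace would either over-count on $L$ or leave an indeterminate residue.

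The main obstacle is the verification of this exceptional case: one must show both that the exceptional range is exactly the claimed $b-1$ values of $a$ (no more and no less) and that the defect equals $\binom{c+2}{2}$ on the nose. This reconciliation between the trace/residue decomposition on $\PP^2$ and the Hilbert function on $L \cong \PP^1$ is the combinatorial heart of the theorem; note also that the exceptional range is empty when $b \leq 1$, consistent with the fact that the parity obstruction requires at least a two-dimensional trace problem on $L$ to manifest. The remainder of the argument, while technical, is a standard if careful application of the reduction and the Horace machinery described above.
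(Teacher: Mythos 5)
Your first step (the multiprojective--affine--projective reduction to $X = aQ_1+bQ_2+mP_1+\cdots+mP_s\subset\PP^2$, via Lemma \ref{lem:multiproj-aff-proj}) agrees with the paper, but after that your plan has a genuine gap. Everything you propose — specializing some of the $P_i$ onto a line and applying the (differential) Horace lemma — is a semicontinuity argument, and semicontinuity only works in one direction: it bounds $\dim\caL_{a+b}(X)$ for \emph{general} points from above by the dimension at a special configuration, i.e.\ it can only prove that the Hilbert function is at least the expected value. It therefore cannot establish the exceptional cases of the theorem, where for $s=2k+1$ and $a=bk+c+s(m-b)$ with $0\le c\le b-2$ one must prove the linear system is strictly \emph{larger} than expected, of dimension exactly ${c+2 \choose 2}$. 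You defer precisely this point (``track the unpaired point through the differential Horace specialization''), but no Horace-type specialization can produce a defect. What is needed, and what the paper actually does, is a B\'ezout argument valid for general position: first the lines $\overline{Q_1P_i}$ are fixed components of multiplicity $m-b$, reducing to $m=b$ with $a'=a-s(m-b)$ (with the range $a-s(m-b)<b$ handled separately in Proposition \ref{prop: a' < b}); then, writing $a'=bk+c$, for $s=2k+1$ the unique curve $C\in\caL_{k+1}(kQ_1+Q_2+P_1+\cdots+P_s)$ is shown to be a fixed component of multiplicity $b-c$ (Lemma \ref{lemma: multiple curve}), and removing $(b-c)C$ leaves a residual system that is non-defective (Lemma \ref{prop:congruent_0_defective}), which yields the exact value ${c+2\choose 2}$ in Proposition \ref{prop:2k+1 points}. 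Without this, or some other a priori lower bound on the dimension in the defective cases, your induction cannot close; in particular your assertion that the failure of classical Horace for odd $s$ is the source of the exception misidentifies the obstruction, which is the multiple fixed curve $C$, not an arithmetic imbalance in the trace on $L$.

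A secondary point: the differential Horace method is not used by the paper for this theorem at all — it is reserved for the triple-point result (Theorem \ref{thm:main_triple}). The non-defective cases here ($s$ even, the emptiness statements for large $s$, and Theorem \ref{thm:main} summarizing the case $m=b$) are handled by elementary means: specializing points onto lines, removing the fixed lines $\overline{Q_1P_i}$ and the line through the specialized points, induction on $b$ and on $b-c$, and Lemma \ref{lem:collinear points} for adding collinear simple points. So even for the part of your plan that semicontinuity can in principle deliver, the heavy differential machinery is unnecessary, and your sketch would still need the explicit degeneration bookkeeping (which points go on the line, with which traces) that constitutes the actual content of those lemmas.
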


\textcolor{black}{Observe that, in all our formulas, we use the standard rule that ${i \choose j} = 0$ if $i < j$.}

Since in each row and column the Hilbert function is eventually constant and Theorem \ref{thm:main_any_m} gives us the Hilbert function only for bi-degrees such that $\min\{a,b\} \leq m$, \textcolor{black}{in general} we are left with an intermediate region where we cannot conclude our computations (see Example \ref{example:HF}). \textcolor{black}{However, in case of triple points, we are able to give a complete description of the Hilbert function in Theorem \ref{thm:main_triple_multi}.}

\setcounter{section}{4}
\setcounter{theorem}{5}
\begin{theorem}\label{thm:main_triple_multi}
	Let $\XX = 3P_1 + \ldots + 3P_s \subset \PP^1 \times \PP^1$. Then,
	$$
		\HF_\XX(a,b) = \min\left\{(a+1)(b+1), 6s\right\},
	$$
	except for
	\begin{enumerate}
	\item $b=1$ and $s<\frac{2}{5}(a+1)$, where $\HF_{\XX}(a,1) = 5s$;
\item  $s$ odd, say $s = 2k+1$, and \\
	 	$\quad (a,b) = (4k+1,2)$, where $\HF_\XX(4k+1,2) = (a+1)(b+1) - 1$; \\
   		$\quad (a,b) = (3k,3)$, where $\HF_\XX(3k,3) = (a+1)(b+1) - 1$;\\
	 	$\quad (a,b) = (3k+1,3)$, where $\HF_\XX(3k+1,3) = 6s- 1$; \\
	\item $s = 5$ and $(a,b) = (5,4)$, where $\HF_X(5,4) = 29$.\\
\end{enumerate}
\end{theorem}

\setcounter{section}{1}

\begin{remark}
Recall that $\PP^1\times\PP^1$ is the {\it Hirzebruch surface} usually called $\mathbb{F}_0$, thus in this paper we study linear systems with multiple base points on $\mathbb{F}_0$. The general case of linear systems with multiple points on the Hirzebruch surfaces $\mathbb{F}_n$ has been considered in several papers. For example, Proposition 5.2 in \cite{La02} gives a list of exceptional cases for any natural number $n$. However, in the case $n = 0$, the exception (iii) of our Theorem \ref{thm:main_triple_multi} is missing. Moreover, we point out that the methods used in \cite{La02} are different from the ones we are using.
\end{remark}

\subsubsection*{Structure of the paper.} In Section \ref{sec:2}, we explain our approach and we describe the main tools we are going to use. In particular, we describe the {\it multiprojective-affine-projective method}, introduced by the \textcolor{black}{second} author together with A.V.~Geramita and A.~Gimigliano \cite{CGG05}, and the {\it differential Horace method} of J.~Alexander and A.~Hirschowitz \cite{AH00}. In Section \ref{sec:3}, we prove Theorem \ref{thm:main_any_m}. In Section \ref{sec:4}, by the differential Horace method we compute the complete bi-graded Hilbert function in the case of triple points (Theorem \ref{thm:main_triple_multi}). In the Appendix, we implement our results with the algebra software {\it Macaulay2} \cite{M2}. 

\subsubsection*{Acknowledgements.} The authors are grateful to R.~Fr\"oberg for suggesting the problem. The third author thanks all the participants of the {\it Problem Solving Seminar in Commutative Algebra} at the Department of Matematics of Stockholm University for useful discussions. The authors wish to thank A. Laface and A. Van Tuyl for useful comments. The first author acknowledges the finacial support of the Politecnico di Torino (Italy) through the ``Ricerca di Base" fund RDB\_17. The second author was supported  by MIUR funds (PRIN 2010-11 grant 2010S47ARA-004 -- Geometria delle Variet\`a Algebriche) (Italy) and by the Universit\`a degli Studi di Genova through the ``Progetti di Ateneo 2015".

\section{Approach and tools}\label{sec:2}
\subsection{Multiprojective-affine-projective method}
In \cite{CGG05}, the authors introduced a method to reduce the multi-graded problem to the graded case. We describe this method in the case of $\PP^1\times\PP^1$, which is the one of our interest. We consider the following birational map
\begin{center}
\begin{tabular}{c c c c c c}
 $\phi$: & $\PP^1 \times \PP^1$ & $\dashrightarrow$ & $ \AAA^2$ & $\longrightarrow$ & $\PP^2,$ \\
 & $([a_0:a_1],[b_0:b_1])$ & $\mapsto$ & $\left(\frac{a_1}{a_0},\frac{b_1}{b_0}\right)$ &$\mapsto$ & $\left[1:\frac{a_1}{a_0}:\frac{b_1}{b_0}\right]$. \\
\end{tabular}
\end{center}
This map is well-defined on the chart $\mathcal{U} = \{a_0b_0 \neq 0\}$.

 Given a set of fat points $X = m_1P_1+\ldots+m_sP_s$ in $\PP^n$ defined by the ideal $I_{X,\PP^n} = \wp_1^{m_1}\cap\ldots\cap\wp_s^{m_s}$, we denote by $\HF_X(d)$ the standard graded Hilbert function in degree $d$ of the corresponding quotient ring. In case of no ambiguity about the ambient space, we simply denote the ideal as $I_{X}$. 

\begin{lemma}\label{lem:multiproj-aff-proj}\cite[Theorem 1.5]{CGG05}
 Let $a,b$ be positive integers and let $\XX$ be a $0$-dimensional scheme with support in $(\PP^1\times\PP^1)\setminus \mathcal{U}$. Let $X = aQ_1 + bQ_2 +\phi(\XX) \subset \PP^2$, where $Q_1 = [0:1:0]$, $Q_2 = [0:0:1]$. Then,
 $$
  \dim(I_{\XX,\PP^1\times\PP^1})_{(a,b)} = \dim(I_{X,\PP^2})_{a+b}.
 $$
\end{lemma}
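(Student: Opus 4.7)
The plan is to exhibit a $\CC$-linear isomorphism between $(I_{\XX,\PP^1\times\PP^1})_{(a,b)}$ and $(I_{X,\PP^2})_{a+b}$ induced by $\phi$.

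First I would construct a bijection $\Psi\colon S_{(a,b)} \xrightarrow{\sim} V$, where $V$ is a distinguished subspace of $R_{a+b}$ with $R = \CC[z_0,z_1,z_2]$. Given $f \in S_{(a,b)}$, dehomogenize it on the chart $\mathcal{U}$ to $g(u,v) := f(1,u,1,v)$, a polynomial of degree at most $a$ in $u$ and at most $b$ in $v$, then rehomogenize in $\PP^2$ by clearing denominators: $F(z_0,z_1,z_2) := z_0^{a+b}\, g(z_1/z_0, z_2/z_0)$, a form of degree $a+b$. Working monomial by monomial, the image $V$ is exactly the subspace of degree-$(a+b)$ forms $F = \sum c_{pqr} z_0^p z_1^q z_2^r$ whose exponents satisfy the constraints forced by bi-degree $(a,b)$; translating these constraints into orders of vanishing at $Q_1 = [0:1:0]$ and $Q_2 = [0:0:1]$ identifies $V$ with the degree-$(a+b)$ component of the ideal $\wp_{Q_1}^{a} \cap \wp_{Q_2}^{b}$.

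Second, I would verify that $\phi$ restricts to a biregular isomorphism from $\mathcal{U}$ onto the affine chart $\{z_0 \neq 0\} \subset \PP^2$, so at every point of $\mathrm{supp}(\XX) \subset \mathcal{U}$ it is a local isomorphism. Hence $\phi(\XX)$ is a $0$-dimensional scheme in $\PP^2$ with the same fat-point multiplicity structure as $\XX$, and its support is disjoint from $Q_1$ and $Q_2$. Tracing $f \in S_{(a,b)}$ through $\Psi$, the condition that $f$ vanishes to the prescribed order at each point of $\XX$ is equivalent, via this local isomorphism, to the condition that $F = \Psi(f)$ vanishes to the same order along $\phi(\XX)$. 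Combining with the vanishing at $Q_1$ and $Q_2$ that already cuts out $V$, this yields $f \in (I_{\XX})_{(a,b)}$ if and only if $F \in (I_{X})_{a+b}$, where $X = aQ_1 + bQ_2 + \phi(\XX)$, so the two dimensions coincide.

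The main obstacle is the first step: pinning down precisely which degree-$(a+b)$ forms lie in the image of $\Psi$ and translating the resulting exponent inequalities faithfully into the multiplicities $a$ at $Q_1$ and $b$ at $Q_2$. This requires a careful monomial bookkeeping (essentially, $q \le a$ and $r \le b$ in the expansion of $F$ amount to vanishing to the stated orders at the two coordinate points), but no new idea is needed beyond the interplay of dehomogenization and vanishing along the coordinate line $\{z_0 = 0\}$. A secondary technical point is that the biregular restriction $\phi|_{\mathcal{U}}$ induces an isomorphism of local rings, so $\phi(\XX)$ carries the same scheme structure as $\XX$; this ensures that the vanishing-order correspondence at the $P_i$'s is faithful.
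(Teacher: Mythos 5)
Your proposal is correct and is essentially the standard argument behind this statement, i.e.\ the proof of \cite[Theorem 1.5]{CGG05}, which the paper quotes without reproving: dehomogenize on the chart $\mathcal{U}$, rehomogenize to degree $a+b$, identify the image with the forms satisfying the two vertex conditions, and match local vanishing orders through the biregular map $\phi|_{\mathcal{U}}$. One bookkeeping caution for your ``main obstacle'': with $\phi$ as defined, a monomial $z_0^pz_1^qz_2^r$ with $q\le a$, $r\le b$ vanishes to order $p+r\ge b$ at $Q_1=[0:1:0]$ and to order $p+q\ge a$ at $Q_2=[0:0:1]$, so the constraints $q\le a$ and $r\le b$ actually give multiplicity $b$ at $Q_1$ and $a$ at $Q_2$ (i.e.\ the vertices carry the multiplicities opposite to the labels in the statement); this transposition, which is already implicit in the statement as printed, is harmless for the dimension equality --- just exchange the names of $Q_1$ and $Q_2$ --- and does not affect any of the applications, where the $P_i$ are general.
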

\begin{notation}
	From now on, let $\XX$ be a scheme of $s$ fat points of multiplicity $m$ in $\PP^1\times\PP^1$, i.e., $$\XX = mP_1 + \ldots + mP_s;$$let $X$ be the scheme defined as in Lemma \ref{lem:multiproj-aff-proj}, i.e., $$X = aQ_1 + bQ_2 + mP_1 + \ldots + mP_s,$$ where, with an abuse of notation, we denote by $P_i$ both the point in $\PP^1\times \PP^1$ and its image in $\PP^2$. Moreover, let $\caL_{a,b}(\XX)$ be the linear system of curves in $\PP^1 \times \PP^1$ of bi-degree $(a,b)$ passing through $\XX$ and let $\caL_{d}(X)$ be the linear system of curves in $\PP^2$ of degree $d$ passing through $Z$.
\end{notation}
\noindent By Lemma \ref{lem:multiproj-aff-proj}, our original question is equivalent to the following.
\smallskip

\begin{center}
{\sc Question.} {\it Let $X = aQ_1 + bQ_2 + mP_1 + \ldots + mP_s \subset \PP^2$ be a scheme of fat points 

 in general position. What is the dimension of the linear system $\caL_{a+b}(X)$?
}
\end{center}

\smallskip

\noindent The {\bf virtual dimension} of the linear system $\caL_{a+b}(X)$, given by a parameter count, is 
\begin{align*}\label{exp.dim}
 \textit{vir}.\dim\caL_{a+b}(X) & = {a+b+2 \choose 2} - {a+1 \choose 2} - {b+1 \choose 2} - s{m+1 \choose 2},
\end{align*}
which coincides with the virtual dimension of the linear system $\caL_{a,b}(\XX)$
\begin{equation*}
	\textit{vir}.\dim\caL_{a,b}(\XX) = (a+1)(b+1) - s{m+1 \choose 2}.
\end{equation*}
The {\bf expected dimension} is defined as the maximum between $0$ and the virtual dimension. If the actual dimension is equal to the virtual dimension, we say that $X$ {\bf imposes independent conditions in degree $a+b$}. Similarly for $\XX$. If the actual dimension is bigger than the expected value, we say that the linear system is defective and we call {\bf defect} the difference between the expected dimension and the actual dimension. In these cases, we call {\bf algebraic defect} the difference between the actual dimension and the virtual dimension. Note that the algebraic defect might be bigger than the defect.

In case of small number of points, the dimension of linear systems of curves with multiple base points of any multiplicity is known. This story goes back to the work of G. Castelnuovo \cite{Ca91} and attracted a lot of attention in the commutative algebra and algebraic geometry community. Just to mention some of them, see \cite{Na60,Segre61,DG84,Harbourne85,Gimigliano88,Hirschowitz89,AH,AH00}. For any number of points with not too large multiplicities see \cite{CM00,CCMO03,Ya07}.

\subsection{Lemmata}
\text{The following results,} are well-known facts for the experts in the area and can be found in several papers in the literature. \textcolor{black}{We explicitly recall for convenience of the reader.}

\begin{lemma}\label{lemma: extremal}
Let $Z$ be a $0$-dimensional scheme in $\PP^2$. Then:
\begin{enumerate}
 \item if $Z$ imposes independent conditions in degree $d$, then, it is true also for any $Z'\subset Z$;
 \item if $\caL_d(Z)$ is empty, then $\caL_d(Z'')$ is empty for any $Z'' \supset Z$.
\end{enumerate}
\end{lemma}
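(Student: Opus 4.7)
The plan is to deduce both statements from the basic fact that a scheme inclusion $A \subset B$ corresponds to the reverse ideal inclusion $I_B \subset I_A$, combined with the vanishing of $H^1$ for coherent sheaves supported on a zero-dimensional scheme. Statement (2) is essentially immediate at the level of ideals: from $Z \subset Z''$ we obtain $I_{Z''} \subset I_Z$, hence $(I_{Z''})_d \subset (I_Z)_d$. The assumption that $\caL_d(Z)$ is empty translates to $(I_Z)_d = 0$, and therefore $(I_{Z''})_d = 0$, which is what we need.

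For (1) I would adopt the cohomological viewpoint. Twisting the ideal sheaf sequence $0 \to \mathcal{I}_Z \to \mathcal{O}_{\PP^2} \to \mathcal{O}_Z \to 0$ by $\mathcal{O}(d)$ and taking the long exact sequence identifies $\HF_Z(d)$ with $\deg Z - \dim H^1(\mathcal{I}_Z(d))$, so $Z$ imposes independent conditions in degree $d$ if and only if $H^1(\mathcal{I}_Z(d)) = 0$. For $Z' \subset Z$ the short exact sequence
$$
0 \to \mathcal{I}_Z \to \mathcal{I}_{Z'} \to \mathcal{I}_{Z'}/\mathcal{I}_Z \to 0
$$
has quotient supported on the $0$-dimensional scheme $Z$, so $H^1$ of any twist of the quotient vanishes. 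The associated long exact sequence therefore exhibits $H^1(\mathcal{I}_{Z'}(d))$ as a quotient of $H^1(\mathcal{I}_Z(d)) = 0$, and consequently $Z'$ imposes independent conditions in degree $d$ as well.

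There is no real obstacle in either part; the only point deserving care is that the characterization of ``independent conditions'' used above must match the definition given in the paper (equality of the actual and virtual dimensions of $\caL_d(Z)$), which is straightforward since $\dim (I_Z)_d = \binom{d+2}{2} - \HF_Z(d)$ and the virtual dimension equals $\binom{d+2}{2} - \deg Z$. A purely algebraic reformulation of (1) is also available: the restriction map $S_d \to H^0(\mathcal{O}_{Z'})$ factors through $S_d \to H^0(\mathcal{O}_Z) \to H^0(\mathcal{O}_{Z'})$, where the second arrow is surjective because $\mathcal{O}_{Z'}$ is a quotient of $\mathcal{O}_Z$; surjectivity of the first arrow, which is the hypothesis, then forces surjectivity of the composition.
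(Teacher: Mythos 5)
Your proof is correct, but it takes a different route from the paper for part (1). The paper argues by a purely numerical squeeze: it bounds $\dim\caL_d(Z')$ from below by the virtual dimension ${d+2 \choose 2}-\deg(Z')$ and from above by $\dim\caL_d(Z)+\deg(Z\setminus Z')$, and then uses the hypothesis $\dim\caL_d(Z)={d+2\choose 2}-\deg(Z)$ to force equality throughout; no cohomology is invoked. You instead translate ``independent conditions'' into the vanishing of $H^1(\mathcal{I}_Z(d))$ (equivalently, surjectivity of the evaluation map $S_d\to H^0(\mathcal{O}_Z)$) and kill $H^1(\mathcal{I}_{Z'}(d))$ via the sequence $0\to\mathcal{I}_Z\to\mathcal{I}_{Z'}\to\mathcal{I}_{Z'}/\mathcal{I}_Z\to 0$, whose quotient has $0$-dimensional support; your closing remark, that $S_d\to H^0(\mathcal{O}_{Z'})$ factors through the surjection $H^0(\mathcal{O}_Z)\to H^0(\mathcal{O}_{Z'})$, is the cleanest version of this and is really the same observation without sheaf cohomology. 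The trade-off: the paper's argument is elementary and self-contained (only parameter counts, exactly the tools it uses elsewhere), while yours is more conceptual and makes transparent why the statement is purely about surjectivity of restriction maps; it does rely on the standard identification $\dim(I_Z)_d=h^0(\mathcal{I}_Z(d))$, i.e.\ on working with saturated ideals, which is harmless here but worth stating. Part (2) you prove exactly as the paper does, by the trivial inclusion of ideals.
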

\begin{proof}
 (1) It is enough to consider the following chain of inequalities:
 \begin{align*}
	{d+2 \choose 2} - \deg(Z') \leq \dim\caL_d(Z') & \leq \dim\caL_d(Z) + \deg(Z\setminus Z') = \\ & = {d+2 \choose 2} - \deg(Z) + (\deg(Z) - \deg(Z')).
 \end{align*}
 
 (2) If there are no curves of degree $d$ through $Z$, then, there are no curves of degree $d$ through $Z''$.
\end{proof}
  
  \begin{lemma}\label{lemma: special} 
   Let $Z$ be a scheme of fat points in $\PP^2$ in general position. If there exists a specialization $\widetilde{Z}$ of $Z$ such that $\caL_d(\widetilde{Z})$ is non-defective, then it is true also for $\caL_d(Z)$.
  \end{lemma}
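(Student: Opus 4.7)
The plan is to invoke the upper semicontinuity of the dimension of the linear system $\caL_d$ under flat deformation of the supports of the fat points. Fix the numerical data of $Z$ (the number $s$ of points and their multiplicities $m_1,\ldots,m_s$), and consider the parameter space $U$ of ordered $s$-tuples of distinct points in $\PP^2$, together with the tautological flat family $\mathcal{Z} \to U$ whose fibre over $t = (P_1,\ldots,P_s)$ is the fat point scheme $Z_t = m_1P_1 + \ldots + m_sP_s$. Because $\deg(Z_t) = \sum_i {m_i+1 \choose 2}$ is constant on $U$, the function
$$
 t \longmapsto \dim(I_{Z_t})_d = {d+2 \choose 2} - \HF_{Z_t}(d)
$$
is upper semicontinuous. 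Consequently there is a non-empty Zariski open subset $U_0 \subset U$ on which this dimension is minimal, and by definition the general configuration $Z$ lies in $U_0$.

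On the other hand, for every $t \in U$ the trivial bound $\HF_{Z_t}(d) \leq \min\left\{{d+2 \choose 2},\, \deg(Z_t)\right\}$ gives the universal lower bound $\dim \caL_d(Z_t) \geq \text{expected dimension}$. The hypothesis provides a specialization $\widetilde{Z} = Z_{\widetilde{t}}$ for some $\widetilde{t} \in U$ at which this lower bound is attained with equality. Upper semicontinuity then yields
$$
 \dim \caL_d(Z) \;\leq\; \dim \caL_d(\widetilde{Z}) \;=\; \text{expected dimension},
$$
and combining with the universal lower bound forces equality, i.e., $\caL_d(Z)$ is non-defective.

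The only real point of care is what ``specialization'' means: the multiplicity structure at each support point must be preserved so that $Z$ and $\widetilde{Z}$ are honest fibres of the same flat family $\mathcal{Z} \to U$. Once this is granted, the statement reduces to a direct application of the standard semicontinuity theorem, so no serious technical obstacle arises — the whole content is that degenerating the supports can only make the linear system larger, never smaller.
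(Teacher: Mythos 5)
Your proposal is correct and follows exactly the same route as the paper, which disposes of the statement in one line by invoking upper semicontinuity of the Hilbert function; you have merely spelled out the standard flat-family argument (constancy of $\deg Z_t$, semicontinuity of $t\mapsto\dim(I_{Z_t})_d$, and the universal lower bound by the expected dimension) that this invocation presupposes.
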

  \begin{proof}
   It follows by upper semicontinuity of the Hilbert function.
  \end{proof}
  
  \begin{remark}\label{remark: extremal}
 The analogous of Lemma \ref{lemma: extremal} and Lemma \ref{lemma: special} also hold if we consider schemes of fat points in $\PP^1\times\PP^1$. This tells us that, in order to prove that a scheme $\XX$ of $s$ fat points of multiplicity $m$ in $\PP^1\times \PP^1$ imposes independent conditions in bi-degree $(a,b)$ for {\it any} number of point $s$, it is enough to consider 
$$
 s_1 = \left\lfloor \frac{(a+1)(b+1)}{{m+1 \choose 2}} \right\rfloor ~~~ \text{ and }~~~
 s_2 = \left\lceil \frac{(a+1)(b+1)}{{m+1 \choose 2}} \right\rceil,
$$
and to prove that:
\begin{itemize}
\item $\caL_{(a,b)}(\XX)$ has expected dimension for $s = s_1$, i.e., equal to $(a+1)(b+1) - s_1{m+1 \choose 2}$;
\item $\caL_{(a,b)}(\XX)$ is empty for $s = s_2$.
\end{itemize}
\end{remark}
  
  \begin{notation}
   Let $Z$ be a scheme of fat points and $r = \{L = 0\}$ be a line in $\PP^2$. We denote:
   \begin{enumerate}
   \item[] ${\rm Res}_r(Z)$: the {\bf residue} of $Z$ with respect to $r$ is the scheme defined by the ideal $I_Z : (L)$;
   \item[] ${\rm Tr}_r(Z)$: the {\bf trace} of $Z$ over $r$ is the scheme defined by the ideal $I_Z + (L)$.
   \end{enumerate}
   More explicitly, if $Z = m_1P_1 + \ldots + m_sP_s$ and the points $P_1,\ldots,P_{s'}$ have support on the line $r$, we have that
   \begin{align*}
{\rm Res}_r(Z) & = (m_1-1)P_1 + \ldots + (m_{s'}-1)P_{s'} + m_{s'+1}P_{s'+1} + \ldots + m_sP_s \subset \PP^2; \\
 {\rm Tr}_r(Z) &= m_1P_1 + \ldots + m_{s'}P_{s'} \subset r.
\end{align*}
  \end{notation}
\begin{lemma}\label{lem:collinear points}{\rm \cite[Lemma 2.2]{CGG05}}
 Let $Z \subset \PP^2$ be a $0$-dimensional scheme, and let $P_1,\ldots,P_s$ be general points on a line $r$. 
\begin{enumerate}
\item If $ \dim\caL_{d}(Z+P_1+\ldots+P_{s-1}) > \dim\caL_{d-1}({\rm Res}_r(Z))$, then
$
  \dim\caL_d(Z+P_1+\ldots+P_s) = \dim\caL_d(Z) - s;$
\item 
if $ \dim\caL_{d-1}({\rm Res}_r(Z))=0$ and $\dim\caL_{d}(Z)\leq s$, then
$\dim\caL_{d}(Z+P_1+\ldots+P_{s})= 0$.
\end{enumerate}
\end{lemma}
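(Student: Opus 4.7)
The plan is to exploit the Castelnuovo-type restriction exact sequence in order to separate, inside $(I_{Z'})_d$, the forms vanishing on $r$ from those that do not. Letting $L$ be a linear form defining $r$, multiplication by $L$ provides an injection $(I_{\mathrm{Res}_r(Z')})_{d-1} \hookrightarrow (I_{Z'})_d$ whose image is exactly the subspace of forms divisible by $L$, and restriction to $r$ sends $(I_{Z'})_d$ to a subspace of $\CC[t]_d \cong H^0(r,\mathcal{O}_r(d))$ with kernel equal to this image. A crucial observation, used repeatedly, is that residuing a simple point on $r$ returns the empty scheme at that point, so $\mathrm{Res}_r(Z+P_1+\ldots+P_k)=\mathrm{Res}_r(Z)$; hence the subspace of $(I_{Z+P_1+\ldots+P_k})_d$ consisting of forms divisible by $L$ has dimension $\dim\caL_{d-1}(\mathrm{Res}_r(Z))$, independent of $k$.

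For part $(1)$, I would proceed by adding the general points one at a time. Monotonicity of the Hilbert function together with the hypothesis $\dim \caL_d(Z+P_1+\ldots+P_{s-1}) > \dim\caL_{d-1}(\mathrm{Res}_r(Z))$ forces the same strict inequality to hold at every intermediate stage $k\leq s-1$. Consequently the image $W_k$ of the restriction map $(I_{Z+P_1+\ldots+P_k})_d \to \CC[t]_d$ is a non-zero subspace, whose base locus on $r$ is a finite set of at most $d$ points. A general point $P_{k+1}$ on $r$ avoids this base locus, so adding it cuts $\dim\caL_d$ by exactly $1$. Iterating $s$ times yields $\dim\caL_d(Z+P_1+\ldots+P_s) = \dim\caL_d(Z) - s$.

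Part $(2)$ proceeds along the same lines. The hypothesis $\dim\caL_{d-1}(\mathrm{Res}_r(Z))=0$ says no form in $(I_Z)_d$ is divisible by $L$, so the restriction map is injective on every intermediate $\caL_d(Z+P_1+\ldots+P_k)$. The same base-locus argument shows that each general $P_{k+1}$ drops the dimension by exactly $1$ as long as the current system is non-empty, so starting from $\dim\caL_d(Z) \leq s$ the dimension reaches $0$ after at most $s$ steps. The main obstacle throughout is verifying that a general point on $r$ is not contained in the base locus of the restricted linear system; both hypotheses of the lemma are engineered precisely to ensure this, by controlling in different ways the contribution of curves having $r$ as a component.
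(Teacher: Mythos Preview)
Your argument is correct and is essentially the standard proof of this result. Note, however, that the paper does not give its own proof of this lemma: it is simply quoted from \cite[Lemma~2.2]{CGG05}, so there is nothing in the paper to compare against beyond the statement itself. Your use of the restriction sequence together with the observation that $\mathrm{Res}_r(Z+P_1+\ldots+P_k)=\mathrm{Res}_r(Z)$ (so the kernel of restriction has constant dimension $\dim\caL_{d-1}(\mathrm{Res}_r(Z))$ at every step) is exactly the mechanism behind the original argument in \cite{CGG05}.
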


\subsection{Horace method}\label{sec:Horace}
The Horace method provides a very powerful tool to prove that a base-curve free linear system has the expected dimension by using an inductive approach. 

Let $Z = m_1P_1+\ldots+m_sP_s \subset \PP^2$ be a scheme with support in general points in $\PP^2$. By parameter count, we know that
\begin{equation}\label{ineq:dimension}
 \dim\caL_d(Z) \geq \max\left\{0, {d+2 \choose 2} - \sum_{i=1}^s {m_i+1 \choose 2}\right\}.
\end{equation}
By Lemma \ref{lemma: special}, if we find a specialization $\widetilde{Z}$ of our scheme such that $\dim\caL_d(\widetilde{Z})$ is as expected, we conclude that the same is true for $Z$. We specialize some of the points to be collinear. Assume that $P_1,\ldots,P_{s'}$ lie on the line $r = \{L = 0\}$. Then, we have {\it Castelnuovo's inequality},
\begin{equation}\label{ineq:castelnuovo}
 \dim~(I_{\widetilde{Z},\PP^2})_d \leq \dim~(I_{{\rm Res}_r(\widetilde{Z}),\PP^2})_{d-1} + \dim~(I_{{\rm Tr}_r(\widetilde{Z}),r})_d.
\end{equation}
This inequality allows us to use induction because on the right hand side we have the \textcolor{black}{dimension of} linear system $\caL_{d-1}({\rm Res}_r(\widetilde{Z}))$ of plane curves with lower degree and the dimension of the ideal of a $0$-dimensional scheme \textcolor{black}{embedded in $\PP^1$}. Thus, if we can prove that the right hand side in \eqref{ineq:castelnuovo} equals the right hand side of \eqref{ineq:dimension}, we can conclude. Unfortunately, sometimes the arithmetic does not allow this method to work for any specialization. In order to overcome this problem, J.~Alexander and A.~Hirschowitz introduced in a series of papers the so-called {\it differential Horace method} \cite{AHb,AHa,AH,AH00}. Here, we follow the exposition of \cite{GI04}.

\begin{definition}
	In the ring of formal functions $S = \CC\llbracket x,y \rrbracket$, we say that an ideal is
	{\it vertically graded with respect to $y$} if it is of the form
	$$
	I = I_0 \oplus I_1y \oplus I_2y^2 \oplus \ldots \oplus (y^m),\text{ where the } I_i
	\text{'s are ideals in }\CC\llbracket x\rrbracket.
	$$
	Let $Z$ be a $0$-dimensional scheme in $\PP^2$ with support at a point $P$ lying on line $r$. We say that $Z$ is {\it vertically graded with base $r$} if there is a regular system of parameters $(x,y)$ at $P$ such that $r$ is defined by $y = 0$ and the ideal of $Z$ is vertically graded in the localization of the coordinate ring of $\PP^2$ at the point $P$. 

For any positive integer $t$, we define the $(t+1)$-th residue and trace of $Z$ with respect to $r$ by the ideals:
\begin{align*}
\text{\it $(t+1)$-th residue:} & ~~~I_{{\rm Res}^t_r(Z)} := I_Z + (I_Z : I_r^{t+1})I_r^t; \\ 
 \text{\it $(t+1)$-th trace:} & ~~~I_{{\rm Tr}^t_r(Z)} := (I_Z : I_r^t) \otimes \mathcal{O}_r;
\end{align*}
where $\mathcal{O}_r$ denotes the structure sheaf of the line $r$. In ${\rm Res}^t_r(Z)$, we remove the $(t+1)$-th {\it slice} of $Z$; in ${\rm Tr}^t_r(Z)$, we consider only the $(t+1)$-th {\it slice} of $Z$. If $Z = Z_1 + \ldots + \textcolor{black}{Z_s}$ is a non-connected $0$-dimensional scheme of fat points, we denote by ${\rm Res}^t_r(Z) = {\rm Res}^t_r(Z_1) + \ldots + {\rm Res}^t_r(\textcolor{black}{Z_s})$ and ${\rm Tr}^t_r(Z) = {\rm Tr}^t_r(Z_1)+ \ldots + {\rm Tr}^t_r(Z_s)$.
\end{definition}
\begin{example}
 Let $Z = 3P$ be a triple point in $\PP^2$. Let $I_P = (x,y)$ and $r = \{y = 0\}$. We have that $Z$ is vertically graded with base $r$ because we can write $I_Z = I_0 \oplus I_1y \oplus I_2y^2 \oplus (y^3)$, where $I_i = (x^{3-i}) \subset \CC\llbracket x\rrbracket$. 
 
 \noindent If $t = 0$, we obtain the usual residue and trace, i.e.,
  
 \begin{minipage}{0.75\textwidth}
  \begin{align*}
   I_{{\rm Res}^0_r(Z)} &:= (x^3,x^2y,xy^2,y^3) + \left((x^3,x^2y,xy^2,y^3) : (y)\right) = \\ & = (x^2,xy,y^2)\subset \CC\llbracket x,y \rrbracket;\\
      I_{{\rm Tr}^0_r(Z)} &:= (x^3,x^2y,xy^2,y^3) \otimes \frac{\CC\llbracket x,y \rrbracket}{(y)} = (x^3) \subset \frac{\CC\llbracket x,y \rrbracket}{(y)}.
  \end{align*}
  \end{minipage}\hspace{1cm}
  \begin{minipage}{0.2\textwidth}
  \resizebox{!}{0.6\textwidth}{
  \begin{tikzpicture}[line cap=round,line join=round,x=1.0cm,y=1.0cm]
\clip(3.,-1.) rectangle (7.,3.);
\draw [domain=0.:10.] plot(\x,{(-0.-0.*\x)/1.});
\begin{scriptsize}
\draw[color=black] (-4.14,-0.15) node {$f$};
\draw [fill=black] (4.,0.) circle (4.5pt);
\draw [fill=black] (5.,0.) circle (4.5pt);
\draw [fill=black] (6.,0.) circle (4.5pt);
\draw [color=black] (4.,1.) circle (4.5pt);
\draw [color=black] (5.,1.) circle (4.5pt);
\draw [color=black] (4.,2.) circle (4.5pt);
\end{scriptsize}
\end{tikzpicture}}
  \end{minipage}
  
\noindent  If we consider other values of $t$, we consider different {\it slices} on the line $r$. If $t = 1$, we get
  
   \begin{minipage}{0.75\textwidth}
  \begin{align*}
   I_{{\rm Res}^1_r(Z)} &:= (x^3,x^2y,xy^2,y^3) + \left((x^3,x^2y,xy^2,y^3) : (y^2)\right)(y) = \\ & = (x^3,xy,y^2)\subset \CC\llbracket x,y \rrbracket;\\
      I_{{\rm Tr}^1_r(Z)} &:= \left((x^3,x^2y,xy^2,y^3):(y)\right) \otimes \frac{\CC\llbracket x,y \rrbracket}{(y)} = (x^2) \subset \frac{\CC\llbracket x,y \rrbracket}{(y)}.
  \end{align*}
  \end{minipage}\hspace{1cm}
  \begin{minipage}{0.2\textwidth}
  \resizebox{!}{0.6\textwidth}{
  \begin{tikzpicture}[line cap=round,line join=round,x=1.0cm,y=1.0cm]
\clip(3.,-1.) rectangle (7.,3.);
\draw [domain=0.:10.] plot(\x,{(-0.-0.*\x)/1.});
\begin{scriptsize}
\draw[color=black] (-4.14,-0.15) node {$f$};
\draw [fill=black] (4.,0.) circle (4.5pt);
\draw [fill=black] (5.,0.) circle (4.5pt);
\draw [color=black] (6.,1.) circle (4.5pt);
\draw [color=black] (4.,1.) circle (4.5pt);
\draw [color=black] (5.,1.) circle (4.5pt);
\draw [color=black] (4.,2.) circle (4.5pt);
\end{scriptsize}
\end{tikzpicture}}
  \end{minipage}

\noindent If $t = 2$, we get 

   \begin{minipage}{0.75\textwidth}
  \begin{align*}
   I_{{\rm Res}^2_r(Z)} &:= (x^3,x^2y,xy^2,y^3) + \left((x^3,x^2y,xy^2,y^3) : (y^3)\right)(y^2) = \\
   & = (x^3,x^2y,y^2)\subset \CC\llbracket x,y \rrbracket;\\
      I_{{\rm Tr}^2_r(Z)} &:= \left((x^3,x^2y,xy^2,y^3):(y^2)\right) \otimes \frac{\CC\llbracket x,y \rrbracket}{(y)} = (x) \subset \frac{\CC\llbracket x,y \rrbracket}{(y)}.
  \end{align*}
  \end{minipage}\hspace{1cm}
  \begin{minipage}{0.2\textwidth}
  \resizebox{!}{0.6\textwidth}{
  \begin{tikzpicture}[line cap=round,line join=round,x=1.0cm,y=1.0cm]
\clip(3.,-1.) rectangle (7.,3.);
\draw [domain=0.:10.] plot(\x,{(-0.-0.*\x)/1.});
\begin{scriptsize}
\draw[color=black] (-4.14,-0.15) node {$f$};
\draw [fill=black] (4.,0.) circle (4.5pt);
\draw [color=black] (5.,2.) circle (4.5pt);
\draw [color=black] (6.,1.) circle (4.5pt);
\draw [color=black] (4.,1.) circle (4.5pt);
\draw [color=black] (5.,1.) circle (4.5pt);
\draw [color=black] (4.,2.) circle (4.5pt);
\end{scriptsize}
\end{tikzpicture}}
  \end{minipage}
\end{example}

\begin{lemma}[{\it (Lemma d'Horace diff\'erentielle)}]\cite[Proposition 2.6]{GI04}
 Let $Z = R + S\subset\PP^2$ be a scheme of fat points where $R$ is a $0$-dimensional scheme of general fat points with support on a line $r$ and $S$ is a scheme of general fat points with support on $\PP^2$. If
 \begin{enumerate}
  \item $\dim~(I_{{\rm Res}^t_r(R) + S,\PP^2})_{d-1} = {d+1 \choose 2} - \deg({\rm Res}^t_r(R) + S)$;
  \item $\dim~(I_{{\rm Tr}^t_r(R),r})_d = d+1 - \deg({\rm Tr}^t_r(R))$;
 \end{enumerate}
 then, 
 $$
  \dim\caL_d(Z) = {d+2 \choose 2} - \deg(Z).
 $$
\end{lemma}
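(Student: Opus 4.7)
The content of the statement is the upper bound $\dim(I_{Z,\PP^2})_d \leq \binom{d+2}{2} - \deg(Z)$, since the lower bound is automatic from parameter counting. By the semicontinuity principle used in Lemma \ref{lemma: special}, this bound is preserved under flat specialization, so the plan is to establish it after replacing $Z$ by a convenient flat degeneration $Z_0$.

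To construct $Z_0$, I would exploit the vertically graded decomposition $I_{m_iP_i} = I_0 \oplus I_1 y \oplus \cdots \oplus (y^{m_i})$ of each fat point of $R$ with respect to $r$. The idea is to keep the $(t+1)$-th slice in place on $r$ while translating the remaining slices off $r$ by an amount $\varepsilon$ in the direction transverse to $r$, and then to pass to the flat limit as $\varepsilon \to 0$. The scheme $S$ is untouched, since its support is generic and therefore disjoint from $r$. The construction is to be arranged so that the flat limit $Z_0$ has classical trace ${\rm Tr}_r(Z_0) = {\rm Tr}^t_r(R)$ and classical residue ${\rm Res}_r(Z_0) = {\rm Res}^t_r(R) + S$.

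Applying the classical Castelnuovo inequality \eqref{ineq:castelnuovo} to $Z_0$ then yields
\[
\dim(I_{Z_0,\PP^2})_d \leq \dim(I_{{\rm Res}^t_r(R)+S,\PP^2})_{d-1} + \dim(I_{{\rm Tr}^t_r(R),r})_d.
\]
Invoking hypotheses (1) and (2), together with Pascal's identity $\binom{d+2}{2} = \binom{d+1}{2} + (d+1)$ and the balance $\deg({\rm Res}^t_r(R)) + \deg({\rm Tr}^t_r(R)) = \deg(R)$---a direct count on the vertically graded decomposition of each $m_iP_i$---one obtains $\dim(I_{Z_0,\PP^2})_d \leq \binom{d+2}{2} - \deg(Z)$, and semicontinuity yields the same bound for $Z$.

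The main obstacle is the construction of the family and the identification of its flat limit: one needs a one-parameter family $\{Z_\varepsilon\}_{\varepsilon\in\AAA^1}$ with $Z_\varepsilon \simeq Z$ for $\varepsilon \neq 0$ and whose flat limit $Z_0$ realizes the classical trace and residue on $r$ prescribed above. This is the technical heart of the differential Horace method of Alexander and Hirschowitz, and it is precisely where the vertically graded hypothesis is indispensable: it furnishes the intrinsic slice decomposition required to separate the ``on-line'' slice from the slices that must be displaced off $r$ in the limit.
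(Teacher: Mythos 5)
First, note that the paper does not prove this statement at all: it is quoted verbatim from \cite[Proposition 2.6]{GI04}, so there is no internal proof to compare against; your proposal must therefore stand on its own.

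As written, it has a genuine gap, and it sits exactly at what you call the technical heart. Your plan is to produce a flat degeneration $Z_0$ of $Z$ whose \emph{classical} trace and residue with respect to $r$ are the \emph{virtual} ones, ${\rm Tr}_r(Z_0)={\rm Tr}^t_r(R)$ and ${\rm Res}_r(Z_0)={\rm Res}^t_r(R)+S$, and then to apply Castelnuovo's inequality \eqref{ineq:castelnuovo} to $Z_0$. But for $t\geq 1$ (which is the only case in which the differential lemma says anything beyond the classical Horace lemma) no subscheme of $\PP^2$ whatsoever has this pair as its trace and residue, so in particular no flat limit of translates of $Z$ can. Indeed, for any $0$-dimensional scheme $W$ and any line $r$ one has $I_W\subseteq I_W:I_r$, hence ${\rm Tr}_r\bigl({\rm Res}_r(W)\bigr)\subseteq {\rm Tr}_r(W)$ as subschemes of $r$. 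For a triple point and $t=1$ this compatibility fails for the virtual pair: ${\rm Tr}^1_r(3P)$ has length $2$, while the trace of ${\rm Res}^1_r(3P)=(x^3,xy,y^2)$ on $r$ is $(x^3)$, of length $3$, which cannot be contained in a length-$2$ subscheme of $r$. (One can also see the contradiction directly: if $x^2+yh\in I_{Z_0}$, then $x^2+yh\in I_{Z_0}:(y)=(x^3,xy,y^2)$, which is impossible.) So the scheme $Z_0$ you would need does not exist, and Castelnuovo's inequality cannot be invoked on any single limit scheme. This impossibility is precisely why Alexander and Hirschowitz had to introduce the \emph{differential} method: the actual proof (as in \cite[Proposition 2.6]{GI04}) works with a one-parameter family in which the selected slice is translated off $r$ by $\varepsilon$, and bounds the dimension by analyzing limits of the \emph{curves} (divisors) in the linear systems $\caL_d(Z_\varepsilon)$ as $\varepsilon\to 0$ — peeling off the line $r$ and using the vertically graded structure slice by slice — rather than by computing the ideal of a flat limit scheme. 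Your degree count $\deg({\rm Res}^t_r(R))+\deg({\rm Tr}^t_r(R))=\deg(R)$ and the direction of semicontinuity are fine; what is missing is a correct substitute for the nonexistent $Z_0$, i.e.\ the actual differential argument.
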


 \section{Hilbert function in low bi-degrees}\label{sec:3}
We are now ready to start our computations. We use the following notation. Let $\XX$ denote a scheme of $s$ fat points of multiplicity $m$ in $\PP^1 \times \PP^1$, i.e., $$\XX = mP_1 + \ldots + mP_s.$$ Let $Z$ be the scheme of fat points in $\PP^2$ constructed from $\XX$ and a bi-degree $(a,b)$, where we always assume $a\geq b \textcolor{black}{ \geq 0}$, as described in Lemma \ref{lem:multiproj-aff-proj}, i.e., $$X = aQ_1 + bQ_2 + mP_1 + \ldots + mP_s.$$
We can consider multiplicities $m\geq 2$ since the case of simple points is trivial. 

 Given two points $A,B \in \PP^2$, we use the notation $\overline{AB}$ for the line passing through them.
 
 \subsection{The case $m = \min\{a,b\}$.}\label{sec: m = b} In this section, \textcolor{black}{we start our computations by considering $m = b$}. Since the cases $b = 0$ is trivial, we may assume $b \geq 1$.

Let $a = bk + c$, with $0 \leq c \leq b-1$. Then, as we mentioned in Remark \ref{remark: extremal}, we consider
\begin{align*}
 s_1 & = \left\lfloor \frac{(a+1)(b+1)}{{b+1 \choose 2}} \right\rfloor = 
 \begin{cases}
  2k & \text{ for } c < \frac{b-2}{2}; \\
  2k+1 & \text{ for } \frac{b-2}{2} \leq c < b-1; \\
  2k+2 & \text{ for } c = b-1.
 \end{cases} \\
  s_2 & = \left\lceil \frac{(a+1)(b+1)}{{b+1 \choose 2}} \right\rceil = 
  \begin{cases}
  2k+1 & \text{ for } c < \frac{b-2}{2}; \\
  2k+2 & \text{ for } c \geq \frac{b-2}{2}. 
 \end{cases}
\end{align*}

We first prove the following lemma that will be useful for our methods. 
\begin{lemma}\label{lemma: multiple curve}
	Let $X = aQ_1 + bQ_2 + bP_1 + \ldots + bP_s \subset \PP^2$ where $a = bk+c$, with $0 \leq c \leq b-1$, and $s = 2k+1$. Then, the unique (irreducible) curve $C \in \caL_{k+1}(kQ_1+Q_2+P_1+\ldots+P_s)$ is contained in the base locus of $\caL_{a+b}(X)$ with multiplicity at least $b-c$.
\end{lemma}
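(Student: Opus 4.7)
The proof naturally splits into two independent parts: first, verifying that the unique curve $C$ in $\caL_{k+1}(kQ_1+Q_2+P_1+\ldots+P_{2k+1})$ is well-defined and irreducible; second, using a repeated Bezout-style argument to show that $C$ appears with multiplicity at least $b-c$ in the base locus of $\caL_{a+b}(X)$.

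For existence and uniqueness, the space of degree-$(k+1)$ forms with multiplicity $k$ at $Q_1$ has dimension $\binom{k+3}{2}-\binom{k+1}{2}=2k+3$, and the $2k+2$ further simple conditions at $Q_2$ and the $P_i$'s yield an expected dimension of $1$; since the $P_i$'s are general and the conditions are simple, expected and actual agree, giving a unique $C$ up to scalar. To rule out a decomposition $C=C_1\cup C_2$ with $\deg C_i=d_i$ and $d_1+d_2=k+1$, I would observe that $\mathrm{mult}_{Q_1}C_1+\mathrm{mult}_{Q_1}C_2\ge k$ together with $\mathrm{mult}_{Q_1}C_i\le d_i$ forces at least one component (say $C_2$) to be a union of $d_2$ lines through $Q_1$, each of which by generality contains at most one of the $2k+2$ simple base points. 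The remaining component $C_1$ then has degree $d_1$ and multiplicity at least $d_1-1$ at $Q_1$, lives in a $(2d_1+1)$-dimensional linear system, and must pass through at least $d_1+k+1$ general points; since $d_1+k+1\ge 2d_1+1$ for every allowable $d_1\le k$, imposing these generic simple conditions forces the system to be empty.

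The second part is an induction on $j=0,1,\ldots,b-c-1$ showing that every $D\in\caL_{a+b}(X)$ admits a factorization $D=C^{j+1}E_{j+1}$. Assuming $D=C^jE_j$, the residual factor $E_j$ has degree $a+b-j(k+1)$ and multiplicity at least $a-jk$, $b-j$, $b-j$ at $Q_1$, $Q_2$ and the $P_i$'s respectively. If $C$ did not divide $E_j$, then by Bezout the intersection $C\cdot E_j$ would equal $(k+1)(a+b-j(k+1))$, while the local contributions at $Q_1,Q_2,P_1,\ldots,P_{2k+1}$ sum to at least
\[
k(a-jk)+(2k+2)(b-j).
\]
A direct calculation using $a=bk+c$ shows that this local lower bound exceeds the Bezout product by exactly $b-c-j$, which is strictly positive for $j<b-c$. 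This contradicts Bezout, so $C$ must divide $E_j$; iterating yields $C^{b-c}\mid D$, which is the required statement.

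The most delicate step is establishing the irreducibility of $C$, since the dimension counts for the various reducible decompositions are tight and one must carefully justify that the generality of the $P_i$'s truly excludes each allowable $(d_1,d_2)$. The Bezout computation itself is mechanical, but one must be careful to track the multiplicities at each step and to verify that $b-c-j>0$ is precisely the strict inequality needed to force divisibility throughout the induction.
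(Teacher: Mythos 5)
Your proof is correct and takes essentially the same route as the paper: a B\'ezout excess computation (your excess $b-c-j$ at step $j$ is exactly the paper's excess $b-c$ applied after removing $j$ copies of $C$) combined with induction to peel off successive copies of $C$ from the base locus. The only difference is that you also verify existence, uniqueness and irreducibility of $C$, which the paper states without proof; this is harmless extra work and your argument for it is sound.
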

\begin{proof}
	We proceed by induction on $b-c$. First, observe that, if $C'$ is a general element in $\caL_{a+b}(X)$, we have
	\begin{align*}
		\deg(C\cap C') & = ak + (s+1)b = \deg(C)\deg(C') + b-c;
	\end{align*}
	then, by B\'ezout's Theorem, $C$ is contained in $\caL_{a+b}(X)$. If $b-c = 1$, the claim is obvious. Let $b-c \geq 2$. We can remove the curve and we get $\caL_{a'+b'}(X')$, where $X' = a'Q_1 + b'Q_2 + b'P_1 + \ldots + b'P_s$, with $a' = a-k$ and $b' = b-1$. Now, $a' = b'k+c$ where $c \leq b-2 = b' -1$. By inductive hypothesis, the curve $C$ is contained with multiplicity $b'-c = b-1-c$ in $\caL_{a'+b'}(X')$ and, consequently, the claim follows. 
\end{proof}

Before the general case, we consider particular cases depending on the congruence class of $a$ modulo $b$. 

\begin{lemma}\label{prop:congruent_0_defective}{\rm [$a \equiv 0$ (mod $b$)]}
 Let $X = bkQ_1 + bQ_2 + bP_1 + \ldots + bP_s\subset \PP^2$. Then,
 $$
  \dim\caL_{b(k+1)}(X) = \max\left\{0, {b(k+1) + 2 \choose 2} - {bk+1 \choose 2} - (s+1){b+1 \choose 2}\right\}.
 $$
 except for $s = 2k + 1$, where the defect is equal to $1$.
\end{lemma}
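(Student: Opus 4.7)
The plan is to separate the exceptional value $s = 2k+1$ from the remaining cases. For $s = 2k+1$, the defect comes from a base-curve phenomenon captured by Lemma \ref{lemma: multiple curve}. For the other values, Remark \ref{remark: extremal} reduces the question to the two thresholds $s_1$ and $s_2$, which are handled by a Horace-type specialization argument.

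First, I would treat the defective case $s = 2k+1$ by applying Lemma \ref{lemma: multiple curve} with $c = 0$: the unique curve $C \in \caL_{k+1}(kQ_1 + Q_2 + P_1 + \ldots + P_{2k+1})$ appears as a base component of $\caL_{b(k+1)}(X)$ with multiplicity at least $b$. Because the multiplicities of $X$ at $Q_1, Q_2, P_i$ are exactly $bk, b, b$, the scheme-theoretic residue $X - bC$ is empty, so $\caL_{b(k+1)}(X) = \CC \cdot C^b$ is one-dimensional. A direct computation shows that the virtual dimension equals $-(b-2)(b+1)/2$, hence the expected dimension is $0$ when $b \geq 2$ and the defect is $1$; when $b = 1$ the virtual dimension already equals $1$, so the formula is attained without correction.

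For the remaining values of $s$, Remark \ref{remark: extremal} reduces the question to the two thresholds $s = s_1 = \lfloor (bk+1)(b+1)/{b+1 \choose 2} \rfloor$ and $s = s_2 = \lceil (bk+1)(b+1)/{b+1 \choose 2} \rceil$, with the value $2k+1$ already handled. Depending on $b$, these lie in $\{2k, 2k+1, 2k+2\}$. At each threshold, my plan is a Horace-type specialization on the line $r = \overline{Q_1 Q_2}$: choose a suitable number of the general points $P_i$ to lie on $r$ (or, when the arithmetic on the trace or residue does not balance, replace one or more fat points $bP_i$ with a vertically graded scheme based on $r$ as in Section \ref{sec:Horace}), and apply Castelnuovo's inequality, or the differential Horace lemma, to compare $\dim\caL_{b(k+1)}(X)$ with $\dim(I_{{\rm Tr}})_{b(k+1)}$ on $r$ and $\dim\caL_{b(k+1)-1}({\rm Res})$ in $\PP^2$. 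The residue is again a scheme of fat points of the same shape but with smaller parameters, so that an induction on $k$ can close the argument.

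The hard part will be the Horace step: one must choose the correct number of points to specialize, and the correct depth $t$ of the vertically graded slice in the differential variant, so that both the trace system on $r$ and the residue system in $\PP^2$ meet their expected dimensions, and so that the residue falls under the inductive hypothesis. In the borderline arithmetic where $s_1$ or $s_2$ equals $2k$ or $2k+2$, it is crucial to verify that no curve analogous to the one produced by Lemma \ref{lemma: multiple curve} obstructs the specialization, and this verification must be carried out separately.
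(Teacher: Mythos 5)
Your treatment of the defective case $s=2k+1$ is correct and is essentially the paper's argument: by Lemma \ref{lemma: multiple curve} with $c=0$ the curve $C$ sits in the base locus with multiplicity $b$, the residual system is $\caL_0(\emptyset)$, so the system is exactly $\CC\cdot C^b$, of dimension $1$, against expected dimension $0$ (for $b\geq 2$). The rest of the proposal, however, has a genuine gap: for the non-defective values of $s$ you only announce a Horace-type plan (``choose a suitable number of points to specialize on $\overline{Q_1Q_2}$, choose the right $t$, induct on $k$'') and explicitly defer ``the hard part''. That part is the actual content of the lemma. In particular, the case $s=2k$, where one must show $\dim\caL_{b(k+1)}(X)=b+1$, requires a concrete specialization that is verified to work; the paper does this by putting $P_1,\ldots,P_k$ on a line $r$ through $Q_2$, adding one auxiliary simple point $A$ on $r$, removing $r$ as a fixed component, then removing the $k$ lines $\overline{Q_1P_i}$ ($i=k+1,\ldots,2k$) by B\'ezout, and inducting on $b$ (with the elementary base case $b=1$), not on $k$. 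None of these choices, nor the arithmetic making the trace and residue come out right, is carried out in your sketch, so the claim for $s=2k$ is unproven.

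There is also a structural problem with invoking Remark \ref{remark: extremal} here: for $b>2$ one has $s_1=2k$ and $s_2=2k+1$, and $s_2$ is precisely the defective case in which the system is \emph{not} empty. Hence the ``expected dimension at $s_1$, emptiness at $s_2$'' scheme cannot close the argument for $s\geq 2k+2$, and your proposal never supplies the missing step. The paper gets it almost for free: since the system at $s=2k+1$ is spanned by $C^b$, which does not pass through an additional general point, the system at $s=2k+2$ (hence at all larger $s$, by Lemma \ref{lemma: extremal}(2)) is empty. Adding this observation, and actually executing the $s=2k$ specialization and induction (or the paper's version of it), is what is needed to turn your outline into a proof.
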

\begin{proof}
 {\sc[Case $s = 2k+1$]} In this case, we expect the linear system $\caL_{b(k+1)}(X)$ to be empty. The conclusion follows because, by Lemma \ref{lemma: multiple curve}, the unique (irreducible) curve $C$ in the linear system $\caL_{k+1}(kQ_1+Q_2+P_1+\ldots+P_{2k+1})$ is contained with multiplicity $b$ in the base locus of $\caL_{b(k+1)}(X)$. 
 
\smallskip
\noindent {\sc [Case $s > 2k+1$]} Since $\dim\caL_{b(k+1)}(X) = 1$ for $s = 2k+1$, the linear system is empty for $s > 2k+1$. 
 
 \smallskip
 \noindent {\sc [Case $s = 2k$]} We know that
 \begin{equation*}
  \dim\caL_{b(k+1)}(X) \geq {b(k+1) + 2 \choose 2} - {bk+1 \choose 2} - (2k+1){b+1 \choose 2} = b+1.
 \end{equation*}
 Let $\widetilde{X}$ be the specialized scheme where we assume that the points $P_{1},\ldots,P_{k}$ are collinear with $Q_2$ and lie on a line $r$ (here, with an abuse of notation, we still call the specialized points by $P_i$'s). By Lemma \ref{lemma: special}, it is enough to prove the following.
 
 \smallskip
 {\it Claim: $\dim\caL_{b(k+1)}(\widetilde{X}) = b+1$.}
 \smallskip
  
  We add an extra point $A$ on the line $r$ and consider the scheme $\widetilde{X}+A$. If we prove that $$\dim\caL_{b(k+1)}(\widetilde{X}+A) = b,$$
 we are done. The line $r$ is a fixed component for $\caL_{b(k+1)}(\widetilde{X}+A)$; hence, we can remove it and
 $$
 	\dim\caL_{b(k+1)}(\widetilde{X}+A) = \dim\caL_{b(k+1)-1}({\rm Res}_r(\widetilde{X}+A)).
 $$
 Now, we proceed by induction on $b$.
 
If $b = 1$, 
$$
	\dim\caL_{k}({\rm Res}_r(\widetilde{X}+A)) = {k+2 \choose 2} - {k+1 \choose 2} - k = 1 = b.
$$
Now, let $b \geq 2$. The lines $\overline{Q_1P_i}$ are a fixed component of $\dim\caL_{b(k+1)-1}({\rm Res}_r(\widetilde{X}+A))$, for $i = k+1,\ldots , 2k$. Hence, after removing them, we conclude by induction
 \begin{equation}\label{eq:8b}
  \dim\caL_{b(k+1)-1)}({\rm Res}_r(\widetilde{X}+A)) = \dim\caL_{(b-1)(k+1)}(X') = b,
 \end{equation}
 where $X'=(b-1)k Q_1 + (b-1) Q_2 + (b-1)P_1 + \ldots + (b-1)P_{2k}$. Now, the claim is proved.
\end{proof}
\begin{lemma}\label{prop:congruent_b-1_defective}{\rm [$a \equiv b-1$ (mod $b$)]}
 Let $X = \textcolor{black}{(bk+b-1)}Q_1 + bQ_2 + bP_1 + \ldots + bP_s\subset\PP^2$. Then, 
 $$
  \dim\caL_{b(k+2)-1}(X) = \max\left\{0,{b(k+2)+1 \choose 2}-{b(k+1) \choose 2}-(s+1){b+1 \choose 2}\right\}.
 $$
\end{lemma}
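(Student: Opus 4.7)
The plan is to reduce the lemma to a single value of $s$ and then induct on $b$. Here $(a+1)(b+1) = b(k+1)(b+1) = (2k+2)\binom{b+1}{2}$, so the bounds in Remark~\ref{remark: extremal} satisfy $s_1 = s_2 = 2k+2$; since the virtual dimension also vanishes at this value of $s$, both conditions of Remark~\ref{remark: extremal} collapse to the single statement that $\dim\caL_{b(k+2)-1}(X) = 0$ for $X = (b(k+1)-1)Q_1 + bQ_2 + b(P_1+\ldots+P_{2k+2})$. I will prove this by induction on $b$, after strengthening the claim to include the case in which $P_1, \ldots, P_{k+1}$ are specialized onto a general line $r$ through $Q_2$; the generic statement then follows by upper semicontinuity of the Hilbert function.

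For the base case $b=1$, the specialized line $r$ carries total multiplicity $k+2 > k+1 = d$, so by B\'ezout it is a fixed component of $\caL_{k+1}(X)$. Removing it leaves $kQ_1 + P_{k+2} + \ldots + P_{2k+2}$ in degree $k$. A plane curve of degree $k$ with a $k$-fold point at $Q_1$ must be a union of $k$ lines through $Q_1$, and such a configuration can pass through at most $k$ of the remaining $k+1$ general points $P_{k+2}, \ldots, P_{2k+2}$; hence no such curve exists.

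For the inductive step ($b \geq 2$), Castelnuovo's inequality with respect to $r$ gives $\dim\caL_d(X) \leq \dim\caL_{d-1}(\mathrm{Res}_r X) + \dim(I_{\mathrm{Tr}_r X, r})_d$. The trace on $r$ consists of $k+2$ fat points of multiplicity $b$, of total degree $(k+2)b = d+1$, so $\dim(I_{\mathrm{Tr}_r X, r})_d = 0$, and it suffices to show $\dim\caL_{d-1}(\mathrm{Res}_r X) = 0$. In $\mathrm{Res}_r X$, for each $j = k+2, \ldots, 2k+2$, the line $\overline{Q_1 P_j}$ meets an element of $\caL_{d-1}$ in at least $(b(k+1)-1) + b = b(k+2)-1 > d-1$ points, so by B\'ezout it lies in the base locus. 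Removing these $k+1$ lines successively (each drops the multiplicity at $Q_1$ and at the corresponding $P_j$ by one) produces a residual scheme of exactly the form of the lemma for parameter $b-1$, namely $((b-1)(k+1)-1)Q_1 + (b-1)Q_2 + (b-1)(P_1 + \ldots + P_{2k+2})$ in degree $(b-1)(k+2)-1$, while $P_1, \ldots, P_{k+1}$ still lie on $r$. The strengthened inductive hypothesis then gives $\dim = 0$, closing the induction.

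The main technical subtlety is that the natural form of the inductive hypothesis (generic configurations only) is too weak to apply to the residual scheme, because upper semicontinuity runs the wrong way for transferring emptiness from generic to specialized. Strengthening the claim from the outset to allow the specialization of $P_1, \ldots, P_{k+1}$ on $r$ resolves this, and the B\'ezout reductions preserve this specialization: the line $r$ is disjoint from $Q_1$ and from $P_{k+2}, \ldots, P_{2k+2}$ for generic $r$, while the lines $\overline{Q_1 P_j}$ neither move the $P_i$ lying on $r$ nor create unwanted incidences.
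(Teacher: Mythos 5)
Your proof is correct and follows essentially the same route as the paper: reduce to $s=2k+2$ via Remark \ref{remark: extremal}, specialize $P_1,\ldots,P_{k+1}$ onto a line $r$ through $Q_2$, remove $r$ and then the fixed lines $\overline{Q_1P_j}$ by B\'ezout, and induct on $b$. The only differences are cosmetic but welcome: you state explicitly the strengthened (specialized) form of the induction hypothesis that the paper's ``conclude by induction'' leaves implicit, and you justify the base case $b=1$ geometrically (unions of $k$ lines through $Q_1$) rather than by the bare numerical count.
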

\begin{proof}
 In the notations of Remark \ref{remark: extremal}, we have that $s_1 = s_2 = 2k+2$. Hence, we just need to prove that $\caL_{b(k+2)-1}(X)$ is empty in the case of $2k+2$ points. Let $\widetilde{X}$ be the specialized scheme, where the points $Q_2,P_{1},\ldots,P_{k+1}$ lie on a line $r$. By Lemma \ref{lemma: special}, it is enough to prove the following claim. 
 
 \smallskip
 	{\it Claim: $\dim\caL_{b(k+2)-1}(\widetilde{X}) = 0.$}
 	
 \smallskip 
  The line $r$ is a fixed component of the linear system $\caL_{b(k+2)-1}(X)$, hence
$$
	\caL_{b(k+2)-1}(\widetilde{X}) = \caL_{b(k+2)-2}({\rm Res}_r(\widetilde{X})).
$$
 We proceed by induction on $b$. If $b = 1$, 
 \begin{align*}
 	\dim\caL_{k+1}(\widetilde{X}) = \dim\caL_{k}({\rm Res}_r(\widetilde{X})
 	= {k+2 \choose 2} - {k+1 \choose 2} - (k+1) = 0. 
 \end{align*}
Assume $b\geq 2$. The lines $\overline{Q_1P_i}$ are contained in the base locus of $\caL_{b(k+2)-2}({\rm Res}_r(\widetilde{X}))$, for $i = k+2,\ldots, 2k+2$, and, after removing them, we conclude by induction 
 \begin{equation}\label{eq:13a}
  \dim\caL_{b(k+2)-2}(\widetilde{X}) = \dim\caL_{(b-1)(k+2)-1}(X') = 0,
 \end{equation}
 where $X '=\big((b-1)(k+1)-1\big)Q_1 + (b-1)Q_2 + (b-1)P_1 + \ldots + (b-1)P_{2k+2}$. Now, the claim is proved.
\end{proof}

\begin{proposition}\label{prop:2k+1 points}
 Let  $X = aQ_1 + bQ_2 + bP_1 + \ldots + bP_s\subset \PP^2$ where $a = bk+c$, with $0 \leq c \leq b-1$, and $s = 2k+1$. Then, 
 $$ 
 	\dim\caL_{a+b}(X) = {a+b+2 \choose 2} - {a+1 \choose 2} - (2k+2){b+1 \choose 2} + {b-c \choose 2} = {c+2 \choose 2}.
 $$
 In particular, for $b-c \geq 2$, it is defective.
\end{proposition}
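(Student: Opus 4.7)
I would proceed in two steps: first reduce the problem via Lemma \ref{lemma: multiple curve}, and then compute the dimension of the residual linear system using a sequence of standard Cremona transformations.

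\emph{Step 1 (stripping off $C$).} By Lemma \ref{lemma: multiple curve}, the unique irreducible curve $C \in \caL_{k+1}(kQ_1 + Q_2 + P_1 + \ldots + P_{2k+1})$ lies in the base locus of $\caL_{a+b}(X)$ with multiplicity $b-c$. Since $C$ is irreducible, every $F \in \caL_{a+b}(X)$ factors uniquely as $F = C^{b-c}\cdot G$, and a direct multiplicity count using $\deg C = k+1$, $\mathrm{mult}_{Q_1}C = k$, $\mathrm{mult}_{Q_2}C = \mathrm{mult}_{P_i}C = 1$ shows that $G$ has degree $a+b-(b-c)(k+1) = c(k+2)$ and multiplicities $c(k+1)$ at $Q_1$ and $c$ at each of $Q_2, P_1, \ldots, P_{2k+1}$. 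This yields a bijection of vector spaces, giving $\dim \caL_{a+b}(X) = \dim \caL_{c(k+2)}(Y)$ with $Y = c(k+1)Q_1 + cQ_2 + cP_1 + \ldots + cP_{2k+1}$.

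\emph{Step 2 (Cremona reduction).} To compute $\dim \caL_{c(k+2)}(Y)$, I would apply a sequence of $k+1$ standard Cremona transformations. For $j = 1, \ldots, k$, the $j$-th Cremona is centered at the general triple $(Q_1, P_{2j-1}, P_{2j})$; by the standard formulas it transforms $\caL_{c(k+3-j)}\bigl(c(k+2-j)Q_1 + cQ_2 + cP_{2j-1} + \ldots + cP_{2k+1}\bigr)$ into $\caL_{c(k+2-j)}\bigl(c(k+1-j)Q_1 + cQ_2 + cP_{2j+1} + \ldots + cP_{2k+1}\bigr)$, consuming the two points $P_{2j-1}, P_{2j}$ while lowering both the degree and the multiplicity at $Q_1$ by $c$. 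After $k$ such steps the system becomes $\caL_{2c}(cQ_1 + cQ_2 + cP_{2k+1})$, and a final Cremona centered at $(Q_1, Q_2, P_{2k+1})$ reduces this to $\caL_c(\emptyset)$, which has dimension ${c+2 \choose 2}$. All multiplicities stay non-negative throughout, so each Cremona gives a dimension-preserving birational equivalence of linear systems.

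\emph{Conclusion and defect.} Combining Steps 1 and 2 yields $\dim \caL_{a+b}(X) = {c+2 \choose 2}$. The identity ${a+b+2 \choose 2} - {a+1 \choose 2} - (2k+2){b+1 \choose 2} + {b-c \choose 2} = {c+2 \choose 2}$ follows by a direct expansion using $a = bk+c$; one computes successively ${a+b+2 \choose 2} - {a+1 \choose 2} = (a+b)(b+1) + 1 - {b \choose 2}$ and then $(a+b)(b+1) - (k+1)b(b+1) = c(b+1)$, leaving $c(b+1) + 1 - {b \choose 2} + {b-c \choose 2} = (c+1)(c+2)/2$. The virtual dimension equals ${c+2 \choose 2} - {b-c \choose 2}$, so the algebraic defect is ${b-c \choose 2}$, which is strictly positive exactly when $b-c \ge 2$.

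\emph{Main obstacle.} The delicate point in Step 2 is the bookkeeping to ensure that all multiplicities on both sides of each Cremona remain non-negative (otherwise the dimension-preservation argument breaks). This is straightforward here because at the $j$-th step the relevant multiplicities are $c(k+2-j)$ at $Q_1$ and $c$ at each $P_i$, so the new multiplicities $c(k+1-j)$ and $0$ are non-negative for $1 \le j \le k+1$. A purely Horace-based alternative (e.g.\ specializing some $P_i$'s onto the line $\overline{Q_1 Q_2}$) could also be used, but would require a more involved case analysis.
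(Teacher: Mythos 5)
Your argument is correct, and it splits the same way the paper's does only in its first half. The opening step --- removing the curve $C \in \caL_{k+1}(kQ_1+Q_2+P_1+\ldots+P_s)$ with multiplicity $b-c$ via Lemma \ref{lemma: multiple curve} and identifying the residual system $\caL_{c(k+2)}\big(c(k+1)Q_1+cQ_2+cP_1+\ldots+cP_{2k+1}\big)$ --- is exactly the paper's reduction. Where you genuinely diverge is in evaluating that residual system: the paper simply invokes Lemma \ref{prop:congruent_0_defective} (applied with $b$ replaced by $c$ and $k$ by $k+1$, which avoids that lemma's exceptional case $s=2k+3$ since here $s=2k+1$), a lemma whose own proof rests on specialization and a Horace-type induction; you instead compute the dimension directly by a chain of $k+1$ standard quadratic Cremona transformations ending at $\caL_c(\emptyset)$. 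Your bookkeeping is right: with $d'=2d-m_1-m_2-m_3$ and $m_i'=d-m_j-m_k$ all transformed multiplicities stay non-negative, the chain terminates at dimension ${c+2 \choose 2}$, and your expansion of the identity involving ${b-c \choose 2}$ checks out. What your route buys is a self-contained, specialization-free computation of the exact dimension of the residual system, independent of the case analysis behind Lemma \ref{prop:congruent_0_defective}; what it costs is importing the classical Cremona formulas, which the paper never uses, together with the (routine, but worth stating) genericity requirements for iterating them: at each step the three centers must be non-collinear and the remaining base points must avoid the fundamental lines, open dense conditions on the original general configuration that propagate through the chain. Two small points to tidy up: for $c=0$ the Cremona chain is degenerate and one should just observe that the residual system is the constants, of dimension $1={2 \choose 2}$; and for the defectivity claim, positivity of the algebraic defect ${b-c \choose 2}$ gives defectivity because the actual dimension ${c+2 \choose 2}$ is positive and therefore also exceeds the expected dimension, which is the maximum of $0$ and the virtual dimension --- you have this in substance, but the comparison should explicitly be with the expected, not the virtual, dimension.
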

\begin{proof}
 By Lemma \ref{lemma: multiple curve}, the unique (irreducible) curve $C$ in the linear system $\caL_{k+1}(kQ_1+Q_2+P_1+\ldots+P_s)$ is contained with multiplicity $b-c$ in $\caL_{a+b}(X)$. Therefore, after removing it, we obtain
 $$
 	\dim\caL_{a+b}(X) = \dim\caL_{c(k+2)}(X'),
 $$
 where $X' = c(k+1)Q_1 + cQ_2 + cP_1 + \ldots + cP_s$. Then, by Lemma \ref{prop:congruent_0_defective}, we get
 $$
 	\dim\caL_{a+b}(X)  = {c(k+2)+2 \choose 2} - {c(k+1)+1 \choose 2} - (2k+2){c+1 \choose 2} = {c+2 \choose 2}.
 $$
\end{proof}
\begin{proposition}\label{prop:2k+2 points}
 Let $X = aQ_1 + bQ_2 + bP_1 + \ldots + bP_s \subset \PP^2$ where $a = bk+c$, with $0 \leq c \leq b-1$, and $s = 2k+2$. Then, $$\dim\caL_{a+b}(X) = 0.$$
\end{proposition}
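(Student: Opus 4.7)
My plan is to reduce to Lemma~\ref{prop:congruent_b-1_defective}, which handles the boundary case $c = b-1$. The idea is that multiplying forms in $(I_X)_{a+b}$ by a fixed polynomial whose only zero among the points of $X$ is at $Q_1$ lifts the linear system into a larger-degree one that matches exactly the scheme treated in Lemma~\ref{prop:congruent_b-1_defective}.

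First I would set $t := b - 1 - c \geq 0$ and pick $t$ distinct lines $L_1,\ldots,L_t$ through $Q_1$, each avoiding $Q_2$ and $P_1,\ldots,P_{2k+2}$ (possible by genericity of the points). Writing $G = L_1 \cdots L_t$, the polynomial $G$ has degree $t$ and vanishes at $Q_1$ with multiplicity exactly $t$, since the $L_i$ are distinct, while being a unit in the local ring at each of the other points of $X$. Multiplication by $G$ is then a $\CC$-linear injection of $(I_X)_{a+b}$ into $(I_{X'})_{a+b+t}$, where
$$
X' = (a+t)Q_1 + bQ_2 + bP_1 + \ldots + bP_{2k+2}.
$$

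Next I would check the numerical identities $a + t = b(k+1) - 1$ and $a + b + t = b(k+2) - 1$. These are exactly the scheme and the degree appearing in Lemma~\ref{prop:congruent_b-1_defective} with the same $k$ and $s = 2k+2$. That lemma yields $\caL_{b(k+2)-1}(X') = 0$, and the injection above then forces $\caL_{a+b}(X) = 0$, as claimed. There is essentially no obstacle: the only fact to verify is that the $Q_1$-multiplicity of $G$ is exactly $t$, which is immediate from the $L_i$ being distinct lines concurrent at $Q_1$. Conceptually, this multiplication trick bypasses any need to analyze a residual curve through the new point $P_{2k+2}$, and pushes the whole statement into the situation already settled in Lemma~\ref{prop:congruent_b-1_defective}.
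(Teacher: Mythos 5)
Your argument is correct, and it reaches the result by a genuinely different route than the paper. The paper proves Proposition \ref{prop:2k+2 points} by induction on $b-c$: the base case $b-c=1$ is Lemma \ref{prop:congruent_b-1_defective}, and in the inductive step the irreducible curve $C\in\caL_{k+1}(kQ_1+Q_2+P_1+\cdots+P_{2k+1})$ is shown (via Lemma \ref{lemma: multiple curve} and B\'ezout) to lie in the base locus, removed, and the resulting scheme is compared with a subscheme to which the inductive hypothesis applies, using Lemma \ref{lemma: extremal}(2). You instead raise the multiplicity at $Q_1$ and the degree simultaneously: multiplication by a degree-$t$ product of lines through $Q_1$, $t=b-1-c$, gives a $\CC$-linear injection $(I_X)_{a+b}\hookrightarrow (I_{X'})_{b(k+2)-1}$ with $X'=(b(k+1)-1)Q_1+bQ_2+bP_1+\cdots+bP_{2k+2}$, so everything reduces in one step to Lemma \ref{prop:congruent_b-1_defective}; note that the requirement that the lines avoid $Q_2$ and the $P_i$ (and that the vanishing order be exactly $t$) is not even needed, since order at least $t$ at $Q_1$ and injectivity of multiplication by a nonzero form already suffice. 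The only point to make explicit is that the right-hand side of Lemma \ref{prop:congruent_b-1_defective} evaluates to $0$ when $s=2k+2$ (a short computation, and precisely what that lemma's proof establishes), so the target system is indeed empty. Your route avoids both the induction and the B\'ezout/base-locus analysis, at the price of this small numerical check; the paper's route stays within the curve-removal machinery it has already set up (Lemma \ref{lemma: multiple curve}), which it reuses throughout Section 3.
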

\begin{proof}
We proceed by induction on $b-c$. If $b-c = 1$, then the claim follows from Lemma \ref{prop:congruent_b-1_defective}. Then, we assume $b-c \geq 2$. The curve $C \in \caL_{k+1}(kQ_1 + Q_2 + P_1 + \ldots + P_{s-1})$ is contained in the base locus of the linear system. After removing it, we obtain that
  $$
   \dim\caL_{b(k+1)+c}(X) = \dim\caL_{a'+b'}(X'),
  $$
  where $X' = a'Q_1+ b'Q_2 + b'P_1 + \ldots + b'P_{s-1} + bP_s$ where $a' = a-k$ and $b' = b-1$. Consider the subscheme $X'' = a'Q_1+ b'Q_2 + b'P_1 + \ldots + b'P_{s-1} + b'P_s$. Since $c \leq b-2 = b'-1$, we conclude by induction that the linear system $\caL_{a'+b'}(X'')$ is empty and, {\it a fortiori}, also $\caL_{a'+b'}(X')$ is empty.
\end{proof}

\begin{proposition}
 Let $X = aQ_1 + bQ_2 + bP_1 + \ldots + bP_s \subset \PP^2$ where $a = bk+c$, with $0 \leq c \leq b-1$, and $s = 2k$. Then, 
 $$
 	\dim\caL_{a+b}(X) = {a+b+2 \choose 2} - {a+1 \choose 2} - (2k+1){b+1 \choose 2} = (b+1)(c+1).
 $$
\end{proposition}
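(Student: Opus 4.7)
The plan is to argue by induction on $b$. For $b=1$ we have $c=0$, and the statement is the $s=2k$ case of Lemma \ref{prop:congruent_0_defective}. For the inductive step $b\ge 2$, I would split on whether $c=b-1$ or $c\le b-2$.

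If $c=b-1$, I would invoke Proposition \ref{prop:2k+2 points} at the same $(b,c)$: adjoining two general fat points $bP_{2k+1},bP_{2k+2}$ to $X$ produces a scheme with $s=2k+2$, on which that proposition forces the linear system to be empty. Since two fat points of multiplicity $b$ impose at most $2\binom{b+1}{2}=b(b+1)$ conditions, I would obtain
\[
\dim\caL_{a+b}(X)\le 0+b(b+1)=(b+1)(c+1),
\]
and the virtual-dimension lower bound gives equality. No specialization is needed here.

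If $c\le b-2$, I would specialize $P_1,\ldots,P_k$ onto a general line $r$ through $Q_2$, producing $\widetilde X$; by Lemma \ref{lemma: special} it then suffices to bound $\dim\caL_{a+b}(\widetilde X)$ above by $(b+1)(c+1)$. After adjoining $c+1$ general simple points $A_1,\ldots,A_{c+1}$ on $r$, it is enough to prove that $\dim\caL_{a+b}(\widetilde X+A_1+\cdots+A_{c+1})=b(c+1)$, since simple points drop the dimension by at most one each. A trace-count on $r$ shows that $r$ is a fixed component (its trace has degree $(k+1)b+(c+1)=a+b+1$), so it can be peeled off. In the residue the $k$ lines $\overline{Q_1P_j}$, $j=k+1,\ldots,2k$, become successively fixed components by the same type of computation, and removing all of them yields
\[
Y=(a-k)Q_1+(b-1)(Q_2+P_1+\cdots+P_{2k})
\]
in degree $A+B$, with $A=a-k=(b-1)k+c$ and $B=b-1$. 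Since $c\le b-2=B-1$, the scheme $Y$ fits the proposition at the parameters $(B,c,k)$ with $s=2k$, and the induction hypothesis on $b$ delivers $\dim\caL_{A+B}(Y)=(B+1)(c+1)=b(c+1)$, which closes the case.

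I expect the main obstacle to be precisely the case $c=b-1$, because the Castelnuovo reduction used for $c\le b-2$ would push $c$ to $B=b-1$, i.e., outside the valid range of the proposition, and the induction would break there. Sandwiching the system directly against Proposition \ref{prop:2k+2 points} is the cleanest workaround.
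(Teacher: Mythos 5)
Your base case $b=1$ (via Lemma \ref{prop:congruent_0_defective}) and your treatment of $c=b-1$ (sandwiching against Proposition \ref{prop:2k+2 points}, which is proved independently of this statement, so there is no circularity) are fine, and the trace counts that make $r$ and the lines $\overline{Q_1P_j}$ fixed components are correct. The gap is at the end of the inductive step for $c\le b-2$: after specializing $P_1,\dots,P_k$ onto the line $r$ through $Q_2$ and peeling off $r$ and the $k$ lines $\overline{Q_1P_j}$, the residual scheme $Y=(a-k)Q_1+(b-1)Q_2+(b-1)P_1+\cdots+(b-1)P_{2k}$ is \emph{not} in general position: the $k+1$ points $Q_2,P_1,\dots,P_k$ still lie on $r$. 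The proposition you are inducting on concerns general points, and semicontinuity goes the wrong way for you: it only gives $\dim\caL_{A+B}(Y)\ge (B+1)(c+1)$, whereas your chain of reductions needs the upper bound $\dim\caL_{A+B}(Y)\le b(c+1)$ to close. So the sentence ``the induction hypothesis on $b$ delivers $\dim\caL_{A+B}(Y)=(B+1)(c+1)$'' is unjustified as written; to repair it you would have to strengthen the inductive statement so that it covers the collinear configuration itself (this is exactly what the paper does inside the proof of Lemma \ref{prop:congruent_0_defective}, where the induction on $b$ is run for the specialized scheme, with the auxiliary point placed on $r$ at every stage), and then redo the $c=b-1$ and auxiliary-point steps within that specialized family.

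For comparison, the paper's proof avoids this trap by a different decomposition: it inducts on $b-c$, chooses $A_1$ general and $A_2,\dots,A_{c+1}$ on the unique curve $C\in\caL_{k+1}(kQ_1+Q_2+P_1+\cdots+P_s+A_1)$ of Lemma \ref{lemma: multiple curve}, and removes $C$ by B\'ezout. Since $C$ passes through all the base points, the residual scheme $(a-k)Q_1+(b-1)Q_2+(b-1)P_1+\cdots+(b-1)P_s$ keeps its support in general position, so the induction hypothesis applies verbatim, with base case $b-c=1$ supplied by Lemma \ref{prop:congruent_b-1_defective}. Your line-peeling strategy can likely be made to work, but only after upgrading the inductive claim as indicated; in its current form the argument has a genuine gap.
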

\begin{proof}
  We proceed by induction on $b-c$. If $b - c = 1$, then $a \equiv b-1 ~{\rm (mod }~b{\rm)}$ and the claim follows from Lemma \ref{prop:congruent_b-1_defective}. Let $b - c \geq 2$. We consider extra points $A_1,\ldots,A_{c+1}$ where $A_1$ is general and $A_2,\ldots,A_{c+1}$ lie on the unique curve in the linear system $\caL_{k+1}(kQ_1+Q_2+P_1+\ldots+P_s+A_1)$. Thus, by B\'ezout's Theorem, $C$ is fixed component for the linear system of curves ofdegree $a+b$ through $X + A_1+\ldots + A_{c+1}$. Hence,
\begin{equation}\label{13:b}
  \dim\caL_{a+b}(X+A_1+\ldots+A_{c+1}) = \dim\caL_{a'+b'}(X'),
\end{equation}
 where $X' = a'Q_1+ b'Q_2 + b'P_1 + \ldots + b'P_s$, with $a' = a-k$ and $b' = b-1$. Now, $a' = b'k+c$ with $c \leq b-2 = b'-1$. By induction, 
 \begin{equation}\label{13:c}
 	\dim\caL_{a'+b'}(X') = (b'+1)(c+1) = b(c+1).
 \end{equation}
 Since
\begin{equation}
 \dim\caL_{a+b}(X) \leq \dim\caL_{a+b}(X+A_1+ \ldots +A_{c+1}) + (c+1)
\end{equation}  
and by \eqref{13:b} and \eqref{13:c}, we get
$$
	\dim\caL_{a+b}(X) \leq (b+1)(c+1).
$$
Since the expected dimension is always a lower bound for the actual dimension, we conclude.
 \end{proof}

Summarizing all previous results, we obtain the following result.

\begin{theorem}\label{thm:main}
  Let $X = aQ_1 + bQ_2 + bP_1 + \ldots + bP_s \subset \PP^2$ with $a\geq b$. Then,
  $$
  	\HF_X(a+b) = \min\left\{{a+b+2 \choose 2}, {a+1 \choose 2} + (s+1){b+1 \choose 2}\right\},
  $$
  except for $s = 2k+1$ and $a = bk+c$, with $0 \leq c \leq b-2$, where 
  $$
  	\HF_X(a+b) = {a+b+2 \choose 2} - {c+2 \choose 2}.
  $$
\end{theorem}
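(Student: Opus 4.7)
The plan is to translate the statement, via the identity $\HF_X(a+b) = \binom{a+b+2}{2} - \dim \caL_{a+b}(X)$, into a computation of the linear system dimension, and then assemble it from the Propositions already proved at the three critical values $s \in \{2k, 2k+1, 2k+2\}$, propagating to all other $s$ using the monotonicity principles in Lemma \ref{lemma: extremal}.

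Write $a = bk + c$ with $0 \leq c \leq b-1$. First, I would treat $s \geq 2k+2$: Proposition \ref{prop:2k+2 points} shows that $\caL_{a+b}(X)$ is empty at $s = 2k+2$, and Lemma \ref{lemma: extremal}(2) then forces emptiness for every $s \geq 2k+2$, so $\HF_X(a+b) = \binom{a+b+2}{2}$. Next, I would treat $s \leq 2k$: the $2k$-point Proposition gives non-defectivity at $s = 2k$, and Lemma \ref{lemma: extremal}(1) propagates independent conditions to every $s \leq 2k$ (such a scheme being a subscheme of the $2k$-point one after relabeling the $P_i$), yielding $\HF_X(a+b) = \binom{a+1}{2} + (s+1)\binom{b+1}{2}$. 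Finally, for $s = 2k+1$, Proposition \ref{prop:2k+1 points} directly gives $\dim \caL_{a+b}(X) = \binom{c+2}{2}$, hence $\HF_X(a+b) = \binom{a+b+2}{2} - \binom{c+2}{2}$; when $c \leq b-2$ this is the exceptional value stated in the theorem (a genuine defect), while when $c = b-1$ a short binomial identity shows $\binom{a+b+2}{2} - \binom{b+1}{2} = \binom{a+1}{2} + (2k+2)\binom{b+1}{2}$, so this value is automatically absorbed into the $\min$ formula.

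The remaining work is purely arithmetic bookkeeping: verifying in each regime that the computed linear-system dimension is compatible with the $\min$ in the statement, i.e. that $\binom{a+b+2}{2} \leq \binom{a+1}{2} + (s+1)\binom{b+1}{2}$ for $s \geq 2k+2$ and the reverse inequality for $s \leq 2k$, and confirming the binomial identity needed in the $s = 2k+1$, $c = b-1$ subcase. The main obstacle is therefore not geometric—the geometric content has already been dispatched by the cited Propositions—but simply checking that the three regimes glue together into the clean $\min$-plus-exception formula of the statement, and in particular that the exception is restricted to $c \leq b-2$ exactly because for $c = b-1$ the formula $\binom{a+b+2}{2} - \binom{c+2}{2}$ agrees with the expected dimension.
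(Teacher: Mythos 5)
Your proposal is correct and follows essentially the same route as the paper: the paper's proof of this theorem is precisely the ``summary'' of Proposition \ref{prop:2k+1 points}, Proposition \ref{prop:2k+2 points} and the $s=2k$ proposition, glued over all $s$ via the monotonicity statements of Lemma \ref{lemma: extremal} (as in Remark \ref{remark: extremal}), together with the same arithmetic checks (sign of the virtual dimension $(a+1)(b+1)-s\binom{b+1}{2}$ in each regime and the identity absorbing the $c=b-1$ case into the $\min$). No gaps.
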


\begin{remark}
	In the case $b = 2$, this result was already proved in \cite[Proposition 2.1]{CGG05}.
\end{remark}

\subsection{The case $m > \min\{a,b\}$.}\label{sec: m > b}
Let $X = aQ_1 + bQ_2 + mP_1 +\ldots+ mP_s \subset\PP^2$ with $a \geq b$ and $m > b$. First, note that if the linear system $\caL_{a+b}(X)$ is not empty, \textcolor{black}{then}, by B\'ezout's Theorem, we have that all the lines $\overline{Q_1P_i}$ are contained in the base locus  with multiplicity at least $m-b$. If $a - s(m-b) \geq b$, we have that
$$
	\dim\caL_{a+b}(X) = \dim\caL_{a'+b}(X'),
$$
with $X' = a'Q_1 + bQ_2 + bP_1 + \ldots + bP_s$, where $a' = a-s(m-b)$. Therefore, this case can be reduced to the case $m = b$ that we \textcolor{black}{ treated in the previous} section. Here, we \textcolor{black}{consider the case} $a - s(m-b) < b$.
\begin{proposition}\label{prop: a' < b}
	Let $X = aQ_1 + bQ_2 + mP_1 + \ldots + mP_s\subset \PP^2$ with $a \geq b$, $m > b$ and $a - s(m-b) < b$. Then, $\caL_{a+b}(X)$ is empty, \textcolor{black}{except for $s = 1$, with $0 \leq a+b-m < b$, where $\dim\caL_{a+b} = {a+b-m+2 \choose 2}$.}
\end{proposition}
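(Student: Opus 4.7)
The plan is to split into the cases $s=1$ and $s \geq 2$ and apply B\'ezout in each.

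\textbf{Case $s=1$.} The hypotheses reduce to $a \geq b$, $m > b$ and $a < m$. If $a + b < m$, a curve of degree $a+b$ vanishing to order $m$ at $P_1$ is forced to vanish identically, so $\dim\caL_{a+b}(X) = 0$. If $a + b \geq m$, B\'ezout on $\overline{Q_1P_1}$ (the multiplicities at $Q_1$ and $P_1$ sum to $a + m > a + b$) shows this line is a fixed component with multiplicity $\geq m - b$; after removing it, the linear system becomes $\caL_{a'+b}(a'Q_1 + bQ_2 + bP_1)$ where $a' := a + b - m \in [0, b)$. A second B\'ezout on $\overline{Q_2P_1}$ (intersection $\geq 2b > a' + b$) removes this line with multiplicity $\geq b - a'$, leaving $\caL_{2a'}(a'Q_1 + a'Q_2 + a'P_1)$. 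Three general fat points of equal multiplicity $a'$ in degree $2a'$ form a non-defective system: after specializing the three points to the coordinate vertices of $\PP^2$, the ideal in degree $2a'$ is spanned by the monomials $x_0^{i_0}x_1^{i_1}x_2^{i_2}$ with $i_0 + i_1 + i_2 = 2a'$ and $i_j \leq a'$, giving ${a'+2 \choose 2}$ monomials. Hence $\dim\caL_{a+b}(X) = {a+b-m+2 \choose 2}$.

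\textbf{Case $s \geq 2$.} We induct on $a + b$. B\'ezout on the lines $\overline{Q_1P_i}$ shows each is a fixed component with multiplicity $\geq m - b$; if $s(m-b) > a$, the required total multiplicity at $Q_1$ would exceed $a$ and the system is empty. Otherwise $\alpha := a - s(m - b) \in [0, b)$ and we reduce to $\caL_{\alpha + b}(\alpha Q_1 + bQ_2 + bP_1 + \ldots + bP_s)$. A second B\'ezout on $\overline{Q_2P_i}$ (intersection $\geq 2b > \alpha + b$) makes each such line a fixed component with multiplicity $\geq b - \alpha$; if $s(b - \alpha) > b$, the system is empty. Otherwise $\beta := b - s(b - \alpha) \in [0, b]$, and the residual system is $\caL_{\alpha+\beta}(\alpha Q_1 + \beta Q_2 + \alpha P_1 + \ldots + \alpha P_s)$.

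If $\beta = 0$, such a curve has degree $\alpha$ and multiplicity $\alpha$ at $Q_1$, hence decomposes as $\alpha$ lines through $Q_1$; to have multiplicity $\alpha$ at each of $s \geq 2$ general $P_i$ these lines would all have to coincide with $\overline{Q_1 P_i}$ for every $i$, impossible. If $\beta > 0$, Lemma \ref{lem:multiproj-aff-proj} (applied in reverse) identifies the residual system with $\caL_{\alpha,\beta}(\XX')$ on $\PP^1 \times \PP^1$, where $\XX' = \alpha P_1 + \ldots + \alpha P_s$. The identity $\alpha - \beta = (s-1)(b - \alpha) > 0$ gives $\alpha > \beta$, and the three hypotheses of the proposition for the new triple $(\alpha, \beta, \alpha)$ (last entry being the new multiplicity) all reduce algebraically to the known inequality $\alpha < b$. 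Since $(a+b) - (\alpha + \beta) = s\bigl((s+1)(m-b) - (a-b)\bigr) > 0$ by $a - b < s(m-b)$, the inductive hypothesis applies and the residual system is empty.

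\textbf{Main obstacle.} The heart of the argument is the inductive step in the case $s \geq 2$ with $\beta > 0$: one must check that the double B\'ezout reduction preserves the hypotheses of the proposition and strictly decreases $a + b$. Both amount to short algebraic verifications of the identities $\alpha - \beta = (s-1)(b - \alpha)$ and $(a+b) - (\alpha + \beta) = s\bigl((s+1)(m-b) - (a-b)\bigr)$; once established, the induction terminates in one of the explicit empty base cases listed above.
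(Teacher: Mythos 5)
Your overall strategy is sound and genuinely different from the paper's. For $s\geq 2$ the paper first discards all but two of the $P_i$'s (emptiness for a subscheme suffices), removes the triangle of lines $\overline{Q_2P_1},\overline{Q_2P_2},\overline{P_1P_2}$ with multiplicity roughly $(b-a')/2$, and then quotes the $m=b$ results of Section~3.1; you instead keep all $s$ points, remove only the lines through $Q_2$, and close a self-referential induction on $a+b$ by checking that the residual configuration $(\alpha,\beta,\alpha)$ again satisfies the hypotheses of the proposition with the same $s$ and strictly smaller degree. Your bookkeeping there is correct: $\alpha-\beta=(s-1)(b-\alpha)>0$, the three hypotheses hold for the new triple, the degree drops by $s\bigl((s+1)(m-b)-(a-b)\bigr)>0$, and the $\beta=0$ terminal case (cone through $Q_1$ containing two distinct lines with full multiplicity) and the $s=1$ computation (ending at three equal fat points of multiplicity $a+b-m$ in degree $2(a+b-m)$, with the monomial count $\binom{a+b-m+2}{2}$ after specializing to the coordinate points) are fine; the paper asserts this last non-defectivity without proof, so your monomial argument is a welcome addition.

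The one real blemish is the justification of the two ``empty'' dispatches. You write that if $s(m-b)>a$ the system is empty because ``the required total multiplicity at $Q_1$ would exceed $a$,'' and analogously for $s(b-\alpha)>b$ at $Q_2$. That is not a contradiction: a member of $\caL_{a+b}(X)$ is only required to have multiplicity \emph{at least} $a$ at $Q_1$, so excess multiplicity at a base point rules out nothing. The correct (and easy) reason is the residual one you use elsewhere: any nonzero $C'\in\caL_{a+b}(X)$ factors as $\prod_i\overline{Q_1P_i}^{\,m-b}\cdot C''$, where either $\deg C''=a+b-s(m-b)<0$ (impossible) or $0\le\deg C''<b$ while $C''$ must still vanish to order $b$ at $Q_2$, forcing $C''=0$; similarly, in the second step the residual after removing $\prod_i\overline{Q_2P_i}^{\,b-\alpha}$ has degree $\alpha+\beta<\alpha$ but must vanish to order $\alpha$ at $Q_1$. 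With these one-line patches (which mirror the paper's handling of the case $a-s(m-b)<0$ via the leftover multiplicity $m-m'>b$ at a point in degree $b$), your induction is complete.
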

\begin{proof}
	Assume that $a - s(m-b) < 0 $. \textcolor{black}{Let $s'$, $m'$ be the quotient and the reminder, respectively, of the division between $a$ and $m-b$, i.e.,} $a = s'(m-b) + m'$, with $s' < s$ and $0 \leq m' < m-b$. Then,
	$$
		\dim\caL_{a+b}(X) = \dim\caL_{b}(X'),
	$$
	where $X' = \textcolor{black}{0\cdot Q_1} + bQ_2 + bP_1 + \ldots + bP_{s'} + (m-m')P_{s'+1} + mP_{s'+2} + \ldots +mP_s$. Since $m - m' > b$, we conclude that $\caL_{b}(X')$ is empty. 

	Now, we are left with the cases $0 \leq a-s(m-b) < b$.
	
	 \textcolor{black}{If $s = 1$, we have $m \leq a + b$ and $a < m$. By B\'ezout's Theorem, we can remove all the lines $\overline{Q_1P_1}$ with multiplicity $m-b$ and $\overline{Q_2P_1}$ with multiplicity $m-b$. Then, we get
	 $$
	 	\dim\caL_{a+b}(X) = \dim\caL_{2(a+b-m)}((a+b-m)Q_1+(a+b-m)Q_2+(a+b-m)P).
	 $$
	 This is non-defective and we get $\dim\caL_{a+b}(X) = {a+b-m+2\choose 2}$.}
	
	 If $s \geq 2$, we remove the lines $\overline{Q_1P_i}$, for $i = 1,\ldots,s$, \textcolor{black}{with multiplicity $m-b$} and we get
	$$
		\caL_{a+b}(X) = \caL_{a'+b}(X''),
	$$
	where $X'' = a'Q_1 + bQ_2 + bP_1 + \ldots + bP_s$ and $a' = a-s(m-b) < b$. \textcolor{black}{Since $s \geq 2$}, it is enough to show that $\caL_{a'+b}(X''')$, where $X''' = a'Q_1 + bQ_2 + bP_1 + bP_2$, is empty. Assume $\dim\caL_{a'+b}(X''') \neq 0$. Since the lines $\overline{Q_2P_1}, \overline{Q_2P_2}$ and $\overline{P_1P_2}$ are in the base locus of $\caL_{a'+b}(X''')$ with multiplicity at least $b-a'$, we need to have $a'+b \geq 3(b-a')$. Let 
	$$b-a' = \begin{cases} 2t & \text{ if } b-a' \text{ is even}; \\ 2t+1 & \text{ if } b-a' \text{ is odd.}\end{cases}$$
	We remove the lines $\overline{Q_2P_1}, \overline{Q_2P_2}$ and $\overline{P_1P_2}$ with multiplicity $t$ and we get
	$$
		\dim\caL_{a'+b}(X''') = \dim\caL_{a'+b-3t}(\overline{X}),
	$$
	where $\overline{X} = a'Q_1 + b'Q_2 + b'P_1 + b'P_2$ where $b' = b-2t$. Note that, since $a'+b \geq 3(b-a')$, then $a'+b-3t \geq 0$. If $b - a' = 2t$, we have $a' = b'$ and, since $t\geq 1$, 
	$
		\dim\caL_{2b'-t}(\overline{X}) = 0.
	$
	If $b - a' = 2t+1$, we have $a' = b'-1$ and 
	$
		\dim\caL_{2b'-t-1}(\overline{X}) = 0.
	$
	This concludes the proof.
\end{proof}

\subsection{Bi-graded Hilbert function in extremal bi-degrees.} 
By Lemma \ref{lem:multiproj-aff-proj}, we can translate our previous computations to get the expressions for the bi-graded Hilbert function of schemes of general fat points in $\PP^1\times\PP^1$ in {\it extremal} bi-degrees.
  \begin{theorem}\label{thm:main_any_m}
  Let $a \geq b$ and assume $m \geq b$. Let $\XX = mP_1 + \ldots + mP_s \subset \PP^1 \times\PP^1$. Then, 
 $$
  \HF_\XX(a,b) = \min\left\{(a+1)(b+1),~ 
  	s{m+1 \choose 2} - s{m-b \choose 2}\right\}
 $$ 
 except if $s = 2k+1$ and $a = bk+c+s(m-b)$, with $c = 0,\ldots,b-2$, where 
 $$
 	\HF_\XX(a,b) = (a+1)(b+1) - {c+2 \choose 2}.
 $$
 \end{theorem}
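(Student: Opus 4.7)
The plan is to use Lemma \ref{lem:multiproj-aff-proj} to translate the question into the computation of $\dim(I_X)_{a+b}$ for $X = aQ_1 + bQ_2 + mP_1 + \ldots + mP_s \subset \PP^2$, and then to transport the results proven in Sections \ref{sec: m = b} and \ref{sec: m > b}. Concretely, since $\dim(I_\XX)_{(a,b)} = \dim(I_X)_{a+b}$, we have
\begin{equation*}
	\HF_\XX(a,b) = (a+1)(b+1) - \dim(I_X)_{a+b},
\end{equation*}
so everything reduces to rewriting the formulas for $\HF_X(a+b)$ already obtained.

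First I would treat the case $m = b$, where Theorem \ref{thm:main} applies directly and the formula to be proven collapses to $\min\{(a+1)(b+1),\, s\binom{b+1}{2}\}$ (since $\binom{m-b}{2}=0$). Combining $\HF_X(a+b) = \min\{\binom{a+b+2}{2}, \binom{a+1}{2} + (s+1)\binom{b+1}{2}\}$ with the identity $(a+1)(b+1) = \binom{a+b+2}{2} - \binom{a+1}{2} - \binom{b+1}{2}$ yields the claim, including the exceptional formula $(a+1)(b+1) - \binom{c+2}{2}$ when $s = 2k+1$ and $a = bk+c$.

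Next, for $m > b$, the argument opening Section \ref{sec: m > b} shows via B\'ezout that each of the $s$ lines $\overline{Q_1 P_i}$ is a base component of $\caL_{a+b}(X)$ with multiplicity $m-b$; after removing them, and assuming $a' := a - s(m-b) \geq b$, we are reduced to
\begin{equation*}
	\dim (I_X)_{a+b} = \dim (I_{X'})_{a'+b}, \qquad X' = a' Q_1 + b Q_2 + b P_1 + \ldots + b P_s,
\end{equation*}
so the result from the previous paragraph applies to $X'$ in degree $a'+b$. Using the identity $\binom{m+1}{2} - \binom{m-b}{2} = (m-b)(b+1) + \binom{b+1}{2}$, the shifted formula for $X'$ becomes exactly $\min\{(a+1)(b+1),\, s\binom{m+1}{2} - s\binom{m-b}{2}\}$, and the exceptional condition $a' = bk+c$ rewrites as $a = bk+c+s(m-b)$ with defect $\binom{c+2}{2}$ preserved.

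For the remaining range $a' < b$ (including $a' < 0$), Proposition \ref{prop: a' < b} gives $\dim\caL_{a+b}(X) = 0$, except for $s = 1$ with $0 \leq a+b-m < b$, where the dimension equals $\binom{a+b-m+2}{2}$. In the generic sub-case a short estimate using $a+1 \leq s(m-b) + b$ together with $s \geq 2$ shows that $s\binom{m+1}{2} - s\binom{m-b}{2} \geq (a+1)(b+1)$, so the minimum in the theorem is achieved at $(a+1)(b+1)$. The $s = 1$ sub-exception slots into the exceptional branch of the theorem with $k = 0$ and $c = a+b-m \in \{0,\ldots,b-2\}$; the boundary value $c = b-1$ produces no genuine defect and is absorbed into the generic minimum formula, which explains why the exception range stops at $c = b-2$.

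The hard part here is not a new geometric argument, since all the geometry was done in Sections \ref{sec: m = b} and \ref{sec: m > b}; the main obstacle I anticipate is the bookkeeping needed to verify that the single uniform statement of Theorem \ref{thm:main_any_m}, together with its single family of exceptions $s=2k+1$, $a = bk+c+s(m-b)$, consolidates the three sub-cases above. In particular, one must carefully compare the binomial expressions on both sides of the reduction, check that the defect $\binom{c+2}{2}$ arising in $\PP^2$ for $s$ odd remains the same defect in $\PP^1\times\PP^1$ after the translation, and confirm that the $s = 1$ exception of Proposition \ref{prop: a' < b} coincides precisely with the $k=0$ instance of the general exceptional family and does not create any further case.
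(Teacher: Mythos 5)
Your proposal is correct and follows essentially the same route as the paper's proof: translate via Lemma \ref{lem:multiproj-aff-proj}, apply Theorem \ref{thm:main} when $m=b$, remove the lines $\overline{Q_1P_i}$ to reduce the case $a-s(m-b)\geq b$ to that situation, invoke Proposition \ref{prop: a' < b} when $a-s(m-b)<b$, and then do the binomial bookkeeping (including checking inequality-type comparisons such as $(a+1)(b+1)\leq s\binom{m+1}{2}-s\binom{m-b}{2}$ for $s\geq 2$, the absorption of $c=b-1$ into the generic formula, and the identification of the $s=1$ exception with the $k=0$ member of the exceptional family). The only detail left implicit is the trivial verification of that inequality also when $s=1$ and $m>a+b$, which the paper spells out but which follows at once from $m-b>a$.
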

\begin{proof}
 By using the multiprojective-affine-projective method, we need to look at the linear system $\caL_{a+b}(X)$ with $X = aQ_1 + bQ_2 + mP_1 +\ldots + mP_s$. 
 
\textcolor{black}{Let $a-s(m-b) < b$. If $s \geq 2$, by Proposition \ref{prop: a' < b}, $\dim\caL_{a+b}(X) = 0$. Since in this case the inequality}
\begin{equation}\label{ineq:main_thm}
 (a+1)(b+1) \leq s{m+1 \choose 2} - s{m-b \choose 2},
\end{equation}
\textcolor{black}{holds, we conclude. If $s = 1$ and $a-s(m-b) <  0$, again by Proposition \ref{prop: a' < b}, $\dim\caL_{a+b}(X) = 0$. Since the inequality \eqref{ineq:main_thm} is still true, we conclude. If $0 \leq a +b - m < b$, by Proposition \ref{prop: a' < b}, we have that}
$$
	\dim\caL_{a+b}(X) = {a+b-m+2 \choose 2} = {c+2 \choose 2}.
$$

Now, assume $a - s(m-b) \geq b$. As explained in Section \ref{sec: m > b}, we may reduce to the case where $m = b$, namely, 
 $$
 	\dim\caL_{a+b}(X) = \dim\caL_{a'+b}(X'),
 $$
 where $X' = a'Q_1 + bQ_2 + bP_1 +\ldots + bP_s$, with $a' = a-s(m-b) \geq b$. Hence, by Theorem \ref{thm:main}, the dimension of $\caL_{a'+b}(X')$ is the maximum between $0$ and
 \begin{align*}
 	{a-s(m-b)+b+2 \choose 2} & - {a-s(m-b) + 1 \choose 2} - (s+1){b+1 \choose 2} = \\
 	& = {a+b+2 \choose 2} - {a+1 \choose 2} - {b+1 \choose 2} - s{m+1 \choose 2} + s{m-b \choose 2} = \\
 	& = (a+1)(b+1) - s{m+1 \choose 2} + s{m-b \choose 2},
 \end{align*}
 except for $s = 2k+1$ and $a - s(m-b) = bk+c$, with $0 \leq c \leq b-1$, where the dimension is ${c+2 \choose 2}$. Since by Lemma \ref{lem:multiproj-aff-proj} we have $\dim\caL_{(a,b)}(\XX) = \dim\caL_{a+b}(X)$, we conclude.
\end{proof} 

\begin{remark}
 In \cite[Corollary 3.4]{GVT}, the authors proved that, for any row and column, in the notation of the latter theorem, the bi-graded Hilbert function of $\XX$ is constant for $\max(a,b) \geq sm$. Theorem \ref{thm:main_any_m} improves this result and tells us that the $b$-th column, for $b \leq m$, becomes constant for $a \geq sm - \left\lceil \frac{sb}{2} \right\rceil$.  
\end{remark}
 \begin{corollary}\label{cor:main_HF}
  Let $\XX$ be a scheme of $s$ fat points of multiplicity $m$ and in general position in $\PP^1\times \PP^1$. Assume that $a \geq b$ and set $k = \left\lfloor \frac{s}{2} \right\rfloor$.  Then, for $b \in \{m-1,m\}$, the Hilbert function of $\XX$ in the $b$-th column is constant for $ a \geq b(k+1) + s(m-b) - 1$ and equal to the degree of $\XX$, i.e., $\HF_\XX(a,b) = s{m+1 \choose 2}$.
 \end{corollary}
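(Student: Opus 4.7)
The strategy is to derive the corollary directly from Theorem \ref{thm:main_any_m}. The first observation is that when $b \in \{m-1, m\}$ one has $\binom{m-b}{2} = 0$ by the convention that $\binom{i}{j} = 0$ for $i < j$, so the second entry of the minimum in that theorem simplifies to $s\binom{m+1}{2}$, which is exactly $\deg(\XX)$. Thus it suffices to verify that at every $a \geq b(k+1) + s(m-b) - 1$ the minimum is achieved by the second entry and that $(a,b)$ is not in the exceptional list.

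Ruling out the exceptions is essentially bookkeeping: they occur only when $s$ is odd, say $s = 2k+1$, and the exceptional values of $a$ are $bk + c + s(m-b)$ with $c \in \{0, \ldots, b-2\}$. The maximum such value is $b(k+1) + s(m-b) - 2$, which is strictly less than the stated threshold, so the threshold immediately places us outside the exceptional locus.

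It then remains to verify $(a+1)(b+1) \geq s\binom{m+1}{2}$ at $a = b(k+1) + s(m-b) - 1$, namely
\[
\bigl(b(k+1) + s(m-b)\bigr)(b+1) \;\geq\; \tfrac{sm(m+1)}{2}.
\]
For $b = m$ the left side equals $m(k+1)(m+1)$, so the inequality reduces to $k+1 \geq s/2$, which holds since $k = \lfloor s/2 \rfloor$. For $b = m-1$ the left side equals $\bigl((m-1)(k+1) + s\bigr)m$, and a short rearrangement (cancelling an $m$ and using $m \geq 1$) again reduces the inequality to $k+1 \geq s/2$. Since $(a+1)(b+1)$ is strictly increasing in $a$, the inequality persists for every $a$ past the threshold, so the second term of the minimum dominates throughout. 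Combined with the exception-avoidance step, Theorem \ref{thm:main_any_m} yields $\HF_\XX(a,b) = s\binom{m+1}{2}$, which is $\deg(\XX)$, as required. There is no real obstacle here — the main point is simply that the stated threshold was engineered to be exactly one more than the largest exceptional $a$ and just large enough to push $(a+1)(b+1)$ past the degree.
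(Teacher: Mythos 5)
Your proposal is correct and follows essentially the same route as the paper: the paper's proof also just evaluates Theorem \ref{thm:main_any_m} at the threshold bi-degrees $((m-1)(k+1)+s-1,m-1)$ and $(m(k+1)-1,m)$ and observes the value equals $\deg(\XX)=s\binom{m+1}{2}$. You merely make explicit what the paper leaves implicit, namely that the exceptional values $a=bk+c+s(m-b)$, $c\le b-2$, all lie strictly below the threshold and that $(a+1)(b+1)\ge s\binom{m+1}{2}$ persists for all larger $a$.
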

 \begin{proof}
  It follows from \ref{thm:main_any_m} by computing the Hilbert function in bi-degrees $((m-1)(k+1)+s-1,m-1)$ and $(m(k+1)-1,m)$ and checking that it is equal to the degree of $\XX$. 
 \end{proof}
 
\noindent A nice property of $0$-dimensional schemes is that their Hilbert function is eventually constant to the degree of the scheme. Corollary \ref{cor:main_HF} gives us lower bounds on the bi-degrees for which the Hilbert function gets constant. Hence, we are left with a limited squared unknown region. This area can be restricted by using \cite[Remark 5.4]{SV06}.

\begin{example}\label{example:HF}
We give an explicit example to describe the situation after Theorem \ref{thm:main_any_m}. Here, we look at the Hilbert function of $5$ random points of multiplicity $5$ in $\PP^1\times\PP^1$. The computation has been done with the algebra software \textit{Macaulay2} {\rm\cite{M2}}. In the table we underline the defective cases. The shaded region indicates the area that we are not yet able to compute with our result. Note that the lower right corner that has been removed from the squared region is due to \cite[Remark 5.4]{SV06} which computes the Hilbert function in any bi-degrees $(i,j)$ such that $i+j \geq \max\{sm-1, 2m-2\} = \max\{24,9\} = 24$.

\smallskip
\begin{center}
{\scriptsize
\begin{tikzpicture}
        \matrix [matrix of math nodes,left delimiter=(,right delimiter=), inner sep=3pt] (m)
        {
1&
      2&
      3&
      4&
      5&
      6&
      7&
      8&
      9&
      10&
      11&
      12&
      13&
      14&
      15&
      16&
      17&
      18&
      19&
      20&
      21&
      22&
      23&
      24&
      25& 
      \textcolor{red}{\underline{25}}\\
      2&
      4&
      6&
      8&
      10&
      12&
      14&
      16&
      18&
      20&
      22&
      24&
      26&
      28&
      30&
      32&
      34&
      36&
      38&
      40&
      42&
      44&
      \textcolor{red}{\underline{45}}&
      \textcolor{red}{\underline{45}}&
      \textcolor{red}{\underline{45}}&
      \textcolor{red}{\underline{45}}\\
      3&
      6&
      9&
      12&
      15&
      18&
      21&
      24&
      27&
      30&
      33&
      36&
      39&
      42&
      45&
      48&
      51&
      54&
      57&
      \textcolor{red}{\underline{59}}&
      \textcolor{red}{\underline{60}}&
      \textcolor{red}{\underline{60}}&
      \textcolor{red}{\underline{60}}&
      \textcolor{red}{\underline{60}}&
      \textcolor{red}{\underline{60}}&
      \textcolor{red}{\underline{60}}&\\
      4&
      8&
      12&
      16&
      20&
      24&
      28&
      32&
      36&
      40&
      44&
      48&
      52&
      56&
      60&
      64&
      \textcolor{red}{\underline{67}}&
      \textcolor{red}{\underline{69}}&
      \textcolor{red}{\underline{70}}&
      \textcolor{red}{\underline{70}}&
      \textcolor{red}{\underline{70}}&
      \textcolor{red}{\underline{70}}&
      \textcolor{red}{\underline{70}}&
      \textcolor{red}{\underline{70}}&
      \textcolor{red}{\underline{70}}&
      \textcolor{red}{\underline{70}}\\
      5&
      10&
      15&
      20&
      25&
      30&
      35&
      40&
      45&
      50&
      55&
      60&
      65&
      \textcolor{red}{\underline{ 69}}&
      \textcolor{red}{\underline{ 72}}&
      \textcolor{red}{\underline{ 74}}&
      75&
      75&
      75&
      75&
      75&
      75&
      75&
      75&
      75&
      75\\
      6&
      12&
      18&
      24&
      30&
      36&
      42&
      48&
      54&
      60&
      \textcolor{red}{\underline{ 65}}&
      \textcolor{red}{\underline{ 69}}&
      \textcolor{red}{\underline{ 72}}&
      \textcolor{red}{\underline{ 74}}&
      75&
      75&
      75&
      75&
      75&
      75&
      75&
      75&
      75&
      75&
      75&
      75\\
      7&
      14&
      21&
      28&
      35&
      42&
      \textcolor{black}{49}&
      \textcolor{black}{56}&
      \textcolor{black}{63}&
      \textcolor{red}{\underline{69}}&
      \textcolor{red}{\underline{ 72}}&
      \textcolor{red}{\underline{ 74}}&
      \textcolor{red}{\underline{ 75}}&
      \textcolor{red}{\underline{ 75}}&
      75&
      75&
      75&
      75&
      75&
      75&
      75&
      75&
      75&
      75&
      75&
      75\\
      8&
      16&
      24&
      32&
      40&
      48&
      \textcolor{black}{56}&
      \textcolor{black}{64}&
      \textcolor{red}{\underline{71}}&
      \textcolor{red}{\underline{74}}&
      75&
      75&
      75&
      75&
      75&
      75&
      75&
      75&
      75&
      75&
      75&
      75&
      75&
      75&
      75&
      75\\
      9&
      18&
      27&
      36&
      45&
      54&
      \textcolor{black}{63}&
      \textcolor{red}{\underline{71}}&
      \textcolor{black}{75}&
      \textcolor{black}{75}&
      \textcolor{black}{75}&
      \textcolor{black}{75}&
      \textcolor{black}{75}&
      \textcolor{black}{75}&
      75&
      75&
      75&
      75&
      75&
      75&
      75&
      75&
      75&
      75&
      75&
      75\\
      10&
      20&
      30&
      40&
      50&
      60&
      \textcolor{red}{\underline{69}}&
      \textcolor{red}{\underline{74}}&
      \textcolor{black}{75}&
      \textcolor{black}{75}&
      \textcolor{black}{75}&
      \textcolor{black}{75}&
      \textcolor{black}{75}&
      \textcolor{black}{75}&
      75&
      75&
      75&
      75&
      75&
      75&
      75&
      75&
      75&
      75&
      75&
      75\\
      11&
      22&
      33&
      44&
      55&
      65&
      \textcolor{red}{\underline{72}}&
      \textcolor{black}{75}&
      \textcolor{black}{75}&
      \textcolor{black}{75}&
      \textcolor{black}{75}&
      \textcolor{black}{75}&
      \textcolor{black}{75}&
      \textcolor{black}{75}&
      75&
      75&
      75&
      75&
      75&
      75&
      75&
      75&
      75&
      75&
      75&
      75\\
      12&
      24&
      36&
      48&
      60&
      \textcolor{red}{\underline{69}}&
      \textcolor{red}{\underline{74}}&
      \textcolor{black}{75}&
      \textcolor{black}{75}&
      \textcolor{black}{75}&
      \textcolor{black}{75}&
      \textcolor{black}{75}&
      \textcolor{black}{75}&
      \textcolor{black}{75}&
      75&
      75&
      75&
      75&
      75&
      75&
      75&
      75&
      75&
      75&
      75&
      75\\
      13&
      26&
      39&
      52&
      65&
      \textcolor{red}{\underline{72}}&
      \textcolor{black}{75}&
      \textcolor{black}{75}&
      \textcolor{black}{75}&
      \textcolor{black}{75}&
      \textcolor{black}{75}&
      \textcolor{black}{75}&
      \textcolor{black}{75}&
      \textcolor{black}{75}&
      75&
      75&
      75&
      75&
      75&
      75&
      75&
      75&
      75&
      75&
      75&
      75\\
      14&
      28&
      42&
      56&
      \textcolor{red}{\underline{69}}&
      \textcolor{red}{\underline{74}}&
      \textcolor{black}{75}&
      \textcolor{black}{75}&
      \textcolor{black}{75}&
      \textcolor{black}{75}&
      \textcolor{black}{75}&
      \textcolor{black}{75}&
      \textcolor{black}{75}&
      \textcolor{black}{75}&
      75&
      75&
      75&
      75&
      75&
      75&
      75&
      75&
      75&
      75&
      75&
      75\\
      15&
      30&
      45&
      60&
      \textcolor{red}{\underline{72}}&
      75&
      75&
      75&
      75&
      75&
      75&
      75&
      75&
      75&
      75&
      75&
      75&
      75&
      75&
      75&
      75&
      75&
      75&
      75&
      75&
      75\\
      16&
      32&
      48&
      64&
      \textcolor{red}{\underline{74}}&
      75&
      75&
      75&
      75&
      75&
      75&
      75&
      75&
      75&
      75&
      75&
      75&
      75&
      75&
      75&
      75&
      75&
      75&
      75&
      75&
      75\\
      17&
      34&
      51&
      \textcolor{red}{\underline{67}}&
      75&
      75&
      75&
      75&
      75&
      75&
      75&
      75&
      75&
      75&
      75&
      75&
      75&
      75&
      75&
      75&
      75&
      75&
      75&
      75&
      75&
      75\\
      18&
      36&
      54&
      \textcolor{red}{\underline{69}}&
      75&
      75&
      75&
      75&
      75&
      75&
      75&
      75&
      75&
      75&
      75&
      75&
      75&
      75&
      75&
      75&
      75&
      75&
      75&
      75&
      75&
      75\\
      19&
      38&
      57&
      \textcolor{red}{\underline{70}}&
      75&
      75&
      75&
      75&
      75&
      75&
      75&
      75&
      75&
      75&
      75&
      75&
      75&
      75&
      75&
      75&
      75&
      75&
      75&
      75&
      75&
      75\\
        };  
        \draw[fill opacity=0.2, color=black, fill = gray] (m-7-7.north west) -- (m-7-14.north east) -- (m-11-14.south east) -- (m-11-13.south east) -- (m-12-13.south east)--(m-12-12.south east)--(m-13-12.south east)--(m-13-11.south east)--(m-14-12.south west) -- (m-14-7.south west) -- (m-7-7.north west); 
        \end{tikzpicture}
        }
        \end{center}
      \end{example}
     
 \section{Triple points}\label{sec:4}
\noindent In this section, we complete Theorem \ref{thm:main_any_m} in the case of triple points in $\PP^1 \times \PP^1$. By Lemma \ref{lem:multiproj-aff-proj}, we want to compute all the dimensions of the linear systems $\caL_{a+b}(X)$ where 
$$X = aQ_1 + bQ_2 + 3P_1 + \ldots + 3P_s \subset \PP^2.$$
Accordingly with Remark \ref{remark: extremal}, we first focus on the two extremal cases
$s=s_1$ and $s=s_2$
where 
$$ s_1 = \left\lfloor \frac{(a+1)\cdot (b+1)}{6} \right\rfloor,\ \ \ \   s_2 = \left\lceil \frac{(a+1)\cdot (b+1)}{6} \right\rceil.
$$
Considering the results of the previous section, we only have to consider the cases with $a,b \geq 4$. Due to technical reasons in our general argument, we prefer to separately consider the cases $(a,b) = (4,4), (5,4)$.

\begin{lemma}[Case $(4,4)$]\label{(4,4)}
 Let $X = 4Q_1 + 4Q_2 + 3P_1 + \ldots + 3P_s\subset \PP^2.$ Then, for any $s$, the Hilbert function of $X$ in degree $8$ is as expected, i.e.,
 $$
 	\HF_X(8) = \min\left\{45,20+6s\right\}.
 $$ 
\end{lemma}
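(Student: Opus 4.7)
By the analog of Remark \ref{remark: extremal} in the projective setting, it suffices to verify the statement in the two extremal cases $s=4$ and $s=5$. Moreover, the case $s=4$ follows from the case $s=5$ via Lemma \ref{lemma: extremal}(1) (a subscheme of a scheme imposing independent conditions in a fixed degree also imposes independent conditions), and $s\geq 6$ follows from Lemma \ref{lemma: extremal}(2). Hence the whole lemma reduces to proving $\dim\caL_8(X)=0$ when $s=5$.

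For $s=5$, the plan is to specialize the points $P_1,P_2,P_3$ onto the line $L=\overline{Q_1Q_2}$ one at a time, alternating specializations with the extraction of $L$ as a fixed component of the base locus; by Lemma \ref{lemma: special}, showing that the specialized system has dimension $0$ is enough to conclude. After specializing $P_1\in L$, B\'ezout forces $L$ to be a base component because the multiplicity contribution at $Q_1,Q_2,P_1$ is $4+4+3=11>8=L\cdot C_8$; after extracting $L$, the residual scheme $3Q_1+3Q_2+2P_1+3P_2+3P_3+3P_4+3P_5$ in degree $7$ still forces $L$ ($3+3+2=8>7$), and a second extraction brings the problem to degree $6$, where $L$ is no longer automatically fixed. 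Specializing $P_2\in L$ reinstates the condition ($2+2+1+3=8>6$); removing $L$ once more lands in degree $5$ with the scheme $Q_1+Q_2+2P_2+3P_3+3P_4+3P_5$. Specializing $P_3\in L$ forces $L$ again ($1+1+2+3=7>5$), and a further extraction leaves the scheme $P_2+2P_3+3P_4+3P_5$ in degree $4$.

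At this point the auxiliary line $\overline{P_4P_5}$ becomes a fixed component of $\caL_4$, since its multiplicity contribution at $P_4$ and $P_5$ is $3+3=6>4$; removing $\overline{P_4P_5}$ twice in succession brings us to $P_2+2P_3+P_4+P_5$ in degree $2$. A last extraction of $L$ (still forced through $P_2,P_3$, as $1+2=3>2$) leaves the linear system of lines through the three simple points $P_3,P_4,P_5$. Since $P_4$ and $P_5$ remained generic throughout the argument, these three points are generically non-collinear, so the final linear system has dimension $0$. The main technical obstacle is precisely the combinatorial bookkeeping of the multiplicities after each removal and the careful timing of the successive specializations: at every stage a suitable line must be genuinely forced into the base locus, so that extracting it preserves the dimension exactly rather than merely bounding it, and the sequence must be designed so that the chain terminates at a configuration whose emptiness is immediate.
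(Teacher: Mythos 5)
Your reduction of the whole lemma to the single case $s=5$ is where the argument breaks. Lemma \ref{lemma: extremal}(1) transfers \emph{independence of conditions} from a scheme to its subschemes, so to use it for $s=4$ you would need the $s=5$ scheme to impose independent conditions in degree $8$; but that scheme has degree $10+10+30=50>45$, so it cannot possibly do so, and what you actually prove about it, namely $\caL_8=\{0\}$, only yields $\dim\caL_8(4Q_1+4Q_2+3P_1+\cdots+3P_4)\le 0+\deg(3P_5)=6$ via the inequality in the proof of Lemma \ref{lemma: extremal}(1). The case $s=4$ has expected dimension $1$, not $0$, and it is genuinely not a consequence of emptiness for $s=5$: a priori the dimension for $s=4$ could be anywhere between $2$ and $6$ and the $s=5$ system would still be empty. (The logical dependence in fact runs the other way: once one knows $\dim\caL_8=1$ for $s=4$, a general fifth point misses the unique octic and emptiness for $s\ge 5$ follows.) So the crucial case $s_1=4$ of Remark \ref{remark: extremal} is missing from your proof. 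The paper settles exactly this case: by Lemma \ref{lemma: special} it suffices to show $\caL_8(X+A)=\{0\}$ for a general simple point $A$; the unique cubic through $2Q_1+Q_2+P_1+\cdots+P_4+A$ is forced into the base locus by B\'ezout ($25>24$), and removing it reduces to $\caL_5(2Q_1+3Q_2+2P_1+\cdots+2P_4)$, which is empty by Theorem \ref{thm:main}. You would need to add an argument of this kind (or an analogous specialization chain for the scheme $X+A$ with $s=4$) to close the gap.

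For what it is worth, your $s=5$ computation itself is sound and is a different route from the paper's: the staged specializations of $P_1,P_2,P_3$ onto $\overline{Q_1Q_2}$, interleaved with B\'ezout extractions of that line and of $\overline{P_4P_5}$, all go in the right direction (each specialization only uses semicontinuity as an upper bound, each extraction is an equality), and the chain correctly terminates with three non-collinear simple points in degree $1$. The paper instead gets emptiness for $s\ge 5$ for free from the $s=4$ case; your argument buys an independent, elementary verification of the empty half, but it cannot replace the non-defectivity half.
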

\begin{proof}
We need to prove that $X$ imposes independent conditions in degree $8$ for $s = s_1 = 4$. Since, for $s = 4$, $\dim\caL_8(X) = 1$, we would have also that $\caL_8(X)$ is empty for $s > 4$. Hence, by Lemma \ref{lemma: extremal}, we conclude.

Let $s = 4$. By Lemma \ref{lemma: special}, it is enough to prove that, for a generic point $A$, $\caL_8(X+A)$ is empty. Consider the unique (irreducible) cubic in the linear system $\caL_3(2Q_1+Q_2+P_1+\ldots+P_4+A)$. By B\'ezout's Theorem, this cubic is a fixed component of the linear system and
$$
 \dim\caL_8(X+A) = \dim\caL_5(X'),
 $$
 with $X' = 2Q_1+3Q_2+2P_1+\ldots+2P_4$. By Theorem \ref{thm:main}, we have that $\caL_5(X')$ is empty and we conclude.
\end{proof}
\begin{lemma}[Case $(5,4)$]\label{(5,4)}
 Let $X = 5Q_1 + 4Q_2 + 3P_1 + \ldots + 3P_s\subset \PP^2.$ Then, the Hilbert function of $X$ in degree $9$ is as expected, i.e., 
 $$
 	\HF_X(9) = \min\{55,25+6s\},
 $$
 except for $s = 5$ where $\HF_X(9) = 54$, instead of $55$.
\end{lemma}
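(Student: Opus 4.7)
The plan is to treat separately the defective case $s=5$, the empty cases $s \geq 6$, and the non-defective cases $s \leq 4$. Since here $s_1 = s_2 = 5$ is the unique extremal index, the core argument concerns $s=5$; the remaining cases will follow from it combined with semicontinuity.

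For $s=5$, the virtual dimension of $\caL_9(X)$ is $0$, but I claim the actual dimension is $1$. Both auxiliary linear systems $\caL_3(2Q_1 + Q_2 + P_1 + \ldots + P_5)$ and $\caL_3(Q_1 + 2Q_2 + P_1 + \ldots + P_5)$ have virtual dimension $1$, so each is spanned by a unique cubic, which I call $C_1$ and $C_2$ respectively; both are irreducible for a generic configuration of the $P_i$. The product $C_1^2 \cdot C_2$ is a curve of degree $9$ vanishing with multiplicities $(5,4,3,3,3,3,3)$ at $(Q_1, Q_2, P_1, \ldots, P_5)$, giving the lower bound $\dim \caL_9(X) \geq 1$. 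For the upper bound, B\'ezout applied to any $C \in \caL_9(X)$ against the irreducible $C_1$ gives intersection multiplicity at the fat points of at least $2\cdot 5 + 1 \cdot 4 + 5 \cdot 3 = 29 > 27 = \deg(C) \deg(C_1)$, forcing $C_1 \mid C$. A second B\'ezout application to the residual curve in $\caL_6(3Q_1 + 3Q_2 + 2P_1 + \ldots + 2P_5)$ against $C_2$ yields $1 \cdot 3 + 2 \cdot 3 + 5 \cdot 2 = 19 > 18 = 3 \cdot 6$, forcing $C_2$ into the base locus as well. The twice-residual linear system is $\caL_3(2Q_1 + Q_2 + P_1 + \ldots + P_5)$ itself, of dimension $1$, so $\dim \caL_9(X) \leq 1$.

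For $s \geq 6$, the unique (up to scalar) element $C_1^2 C_2$ of $\caL_9(X|_{s=5})$ has finite support, so a generic simple point $A$ is not on $C_1^2 C_2$, giving $\dim \caL_9(X|_{s=5} + A) = 0$. Since $A$ is contained in $3A$ as schemes, $\caL_9(X|_{s=5} + 3A) \subseteq \caL_9(X|_{s=5} + A)$ also has dimension $0$. Specializing $P_6 \to A$ and invoking Lemma \ref{lemma: special}, $\dim \caL_9(X|_{s=6}) \leq \dim \caL_9(X|_{s=5} + 3A) = 0$, and for $s > 6$ the linear system is contained in this one and is therefore also empty.

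For $s \leq 4$, Lemma \ref{lemma: extremal}(1) reduces the problem to proving $\dim \caL_9(X|_{s=4}) = 6$. The lower bound is the virtual one. For the upper bound, I would apply Castelnuovo's inequality \eqref{ineq:castelnuovo} to the line $r = \overline{Q_2 P_1}$: the trace on $r$ has degree $4 + 3 = 7$, contributing $\dim (I_{{\rm Tr}_r X, r})_9 = 3$, and the residue $5Q_1 + 3Q_2 + 2P_1 + 3P_2 + 3P_3 + 3P_4$ in degree $8$ would then be bounded by further iterated Castelnuovo on lines $\overline{Q_1 P_j}$ and $\overline{Q_2 P_k}$ until the problem reduces to a configuration of general double points in small degree covered by the Alexander--Hirschowitz theorem. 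The main obstacle will be the defective case $s = 5$: verifying the irreducibility of $C_1$ and $C_2$ and the strict B\'ezout inequalities above. The case $s \leq 4$ is more technical in execution (requiring a careful sequence of Castelnuovo reductions with tight bounds at every step), while $s \geq 6$ is then a short semicontinuity argument.
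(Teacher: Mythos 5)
Your treatment of $s=5$ and $s\geq 6$ is correct and essentially the paper's argument in a mild variant: you exhibit the element $C_1^2C_2$ for the lower bound and split off $C_1$ and then $C_2$ by B\'ezout for the upper bound, whereas the paper splits off only $C_1$ and quotes the double-point result \cite[Proposition 2.1]{CGG05} for $\caL_6(3Q_1+3Q_2+2P_1+\ldots+2P_5)$; both routes are fine (your phrase ``finite support'' for $C_1^2C_2$ is a slip --- you mean that a general point does not lie on this fixed curve --- but the intent is clear), and the emptiness for $s\geq 6$ then follows exactly as in the paper.

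The genuine gap is the case $s\leq 4$, which carries real content here: for $s=4$ you must prove $\dim\caL_9(5Q_1+4Q_2+3P_1+\ldots+3P_4)=6$, and your proposal only outlines a strategy (``I would apply Castelnuovo\ldots would then be bounded by further iterated Castelnuovo\ldots until the problem reduces to\ldots''). After your first reduction along $r=\overline{Q_2P_1}$ you are left with $\caL_8(5Q_1+3Q_2+2P_1+3P_2+3P_3+3P_4)$, which must be shown to have dimension exactly $3$; this is not covered by Alexander--Hirschowitz (the multiplicities are not all $2$), and the further chain of lines $\overline{Q_1P_j}$, $\overline{Q_2P_k}$ is neither specified nor checked --- each step requires the trace to impose independent conditions and the residual arithmetic to stay tight, and the quintuple point at $Q_1$ makes the intermediate systems exactly the kind where hidden defectivity (e.g.\ via the conic through $Q_1,Q_2,P_2,P_3,P_4$) has to be ruled out by hand. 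So as written the upper bound $\dim\caL_9(X)\leq 6$ for $s=4$ is not established. For comparison, the paper closes this case in two lines: add two simple points $A_1,A_2$ with $A_1$ general and $A_2$ specialized onto the unique cubic in $\caL_3(2Q_1+Q_2+P_1+\ldots+P_4+A_1)$; by B\'ezout that cubic is a fixed component of $\caL_9(X+A_1+A_2)$, and removing it gives $\caL_6(3Q_1+3Q_2+2P_1+\ldots+2P_4)$, whose dimension $4$ is known from \cite[Proposition 2.1]{CGG05}, whence $\dim\caL_9(X)\leq 4+2=6$ by semicontinuity. Either adopt such an argument or spell out and verify every step of your Castelnuovo chain; as it stands, $s\leq 4$ is a plan, not a proof.
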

\begin{proof}
If $s = s_1 = s_2 = 5$, we consider the unique (irreducible) cubic $C$ in $\caL_3(2Q_1+Q_2+P_1+\ldots+P_5)$. By B\'ezout's Theorem, the curve is contained in the base locus of $\caL_9(X)$. Then, by \cite[Proposition 2.1]{CGG05}, we have
 $$
  \dim\caL_9(X) = \dim\caL_6(3Q_1+3Q_2+2P_1+\ldots+2P_5) = 1.
 $$
 Since the linear system has dimension $1$ for $s = 5$, then $\caL_9(X)$ is empty for $s > 5$. 
 
 Consider now $s = 4$. We need to show that $\dim\caL_9(X)=6$. Let $A_1,A_2$ be two points such that $A_1$ is general and $A_2$ lies on the unique (irreducible) cubic in $\caL_3(2Q_1+Q_2+P_1+\ldots+P_4+A_1)$. By Lemma \ref{lemma: special}, it is enough to show that $\dim\caL_9(X+A_1+A_2) = 4$. By B\'ezout's Theorem, the cubic is in the base locus. Then,
 $$
  \dim\caL_9(X+A_1+A_2) = \dim\caL_6(X'),
 $$
 where $X' = 3Q_1 + 3Q_2 + 2P_1 + \ldots + 2P_4$. By \cite[Proposition 2.1]{CGG05}, we conclude.
\end{proof}

\noindent From now on, let $a\geq b\geq 4$ and $a+b \geq 10$. Our computations will be structured as follows.

\smallskip

\hfill
\begin{minipage}{0.95\textwidth}
\begin{enumerate}
\item[\it Step 1:] 
Let $r$ be a general line. We specialize the scheme $X$ to a scheme $\widetilde{X}$ having some of the triple points with support lying generically on $r$, but, with suitable degrees in such a way that, by differential Horace method, the line $r$ and the line $\overline{Q_1Q_2}$ become fixed components and can be removed.
Let $T := {\rm Res}_{\overline{Q_1Q_2}}({\rm Res}_{r}(\widetilde{X}))$ be the residual scheme.

\item[\it Step 2:]
 If necessary, we specialize another point $P_i$ on the line $r$ in such a way that the lines $r$ and $\overline{Q_1Q_2}$ are again fixed components and we can remove them. Let $\widetilde{T}$ such a specialization and consider $W := {\rm Res}_{\overline{Q_1Q_2}}({\rm Res}_{r}(\widetilde{T}))$ the residual scheme.

\item[\it Step 3:] 
The scheme $W$ has a some of the points which are in general position over the line $r$. Then, we use induction on $b$ and Lemma \ref{lem:collinear points} to conclude.
\end{enumerate}
\end{minipage}

\smallskip

Our procedure will depend on the congruence class of $a+b$ modulo $5$. The reason of this dependency will be clear during the proof and it will be caused by our particular approach.

\begin{notation}
 Recalling the constructions of Section \ref{sec:Horace}, we denote by $D^{(i)}_r(P)$ the $0$-dimensional scheme defined by $(x^i,y)$, where $\{x,y\}$ are a regular system of parameters at $P$ such that $r$ is the line $y = 0$. More in general, let $D_r^{(i_1,\ldots,i_m)}(P)$ be the $0$-dimensional scheme with support at $P$ and such that ${\rm Tr}_r^j\left(D^{(i_0,\ldots,i_m)}_r(P)\right) = D^{(i_j)}_r(P)$, for any $j = 1,\ldots,m$. We denote by $\caS_r(3P)$ the {\it slice} of the triple point that we want to consider on the line $r$.
\end{notation}

\begin{lemma}\label{lemma: main triple 1}
  Let $X = aQ_1 + bQ_2 + 3P_1 + \ldots + 3P_s \subset\PP^2$ with $s \geq s_1$, $a\geq b\geq 4$ and $a+b \geq 10$. Let $a + b = 5h + c$, with $0 \leq c \leq 4$, and let
 $$x = 
 \begin{cases}
  h+1 & \text{ for } c = 0; \\
  h+2 & \text{ for } c = 1,2,3,4; \\
 \end{cases}
 ~~~~~
 y = \begin{cases}
   h-1 & \text{ for } c = 0; \\
   h-2 & \text{ for } c = 1,2,3,4. \\
 \end{cases}.
$$
Let $\widetilde{X}$ be a specialization of $X$ having $P_1,\ldots,P_{x+y}$ lying generically on a line $r$ and, in cases $c=2,3,4$, having also $P_{x+y+1}$ lying on the line $r$, with the following degrees
$$
  \deg(\caS_r(3P_i))  = \begin{cases} 3 & \text{ for } i =1,\ldots,x; \\
  2 & \text{ for } i = x+1,\ldots,x+y; \end{cases} ~~~~~
  \deg(\caS_r(3P_{x+y+1})) = \begin{cases}
   1 & \text{ for } c = 2; \\
   2 & \text{ for } c = 3; \\
   3 & \text{ for } c = 4;
  \end{cases}
$$
 Then:
 \begin{enumerate}
 
\item $x+y+1 \leq s_1 = \left\lfloor \frac{(a+1)\cdot (b+1)}{6} \right\rfloor;$
\item $
  \dim\caL_{a+b}(\widetilde{X}) = \dim\caL_{a+b-2}\left({\rm Res}_{\overline{Q_1Q_2}}\left({\rm Res}_r(\widetilde{X})\right)\right).
 $
 \item ${\rm Res}_{\overline{Q_1Q_2}}\left({\rm Res}_r(\widetilde{X})\right)= (a-1)Q_1 + (b-1)Q_2 ~ + $
 \end{enumerate}
 $$
+\begin{cases}
  2P_1 + \ldots + 2P_{h+1} +  D_r^{(3,1)}(P_{h+2})+ \ldots + D_r^{(3,1)}(P_{2h}) +  3P_{2h+1}\ldots + 3P_{s}, \hfill \text{ for } c = 0; \\
  2P_1 + \ldots + 2P_{h+2} + D_r^{(3,1)}(P_{h+3})+ \ldots + D_r^{(3,1)}(P_{2h})   +  3P_{2h+1}\ldots + 3P_{s},
  \hfill \text{ for } c = 1; \\
 2P_1 + \ldots + 2P_{h+2} + D_r^{(3,1)}(P_{h+3})+ \ldots + D_r^{(3,1)}(P_{2h}) + D_r^{(3,2)}(P_{2h+1})  +  3P_{2h+2}\ldots + 3P_{s}, \\
 \hfill \text{ for } c = 2; \\
  2P_1 + \ldots + 2P_{h+2} +  D_r^{(3,1)}(P_{h+3})+ \ldots + D_r^{(3,1)}(P_{2h}) + D_r^{(3,1)}(P_{2h+1}) +  3P_{2h+2}\ldots + 3P_{s},\\
  \hfill \text{ for } c = 3;  \\
 2P_1 + \ldots + 2P_{h+2} +  D_r^{(3,1)}(P_{h+3})+ \ldots + D_r^{(3,1)}(P_{2h}) + 2(P_{2h+1}) +  3P_{2h+2}\ldots + 3P_{s}. \\
 \hfill \text{ for } c = 4.
 \end{cases}
 $$
\end{lemma}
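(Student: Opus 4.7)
My proof plan for the lemma breaks naturally into its three stated parts, with the substantive work in (2).

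For part (1), I observe that $x + y + 1 = 2h + 1$ in every case $c \in \{0,1,2,3,4\}$. Under the constraints $a \geq b \geq 4$ and $a+b = 5h+c$, the product $(a+1)(b+1)$ is minimized at $b = 4$, giving $(a+1)(b+1) \geq 5(5h+c-3)$; combined with $h \geq 2$ (forced by $a+b \geq 10$), the inequality $6(2h+1) \leq (a+1)(b+1)$ reduces to $13h + 5c \geq 21$, which is immediate.

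The core of the lemma is part (2). The key arithmetic observation — which I verify case by case — is that the prescribed slice degrees sum to exactly $a + b + 1$ on $r$:
\[3x + 2y + \bigl(\text{extra slice at }P_{x+y+1}\bigr) \;=\; 5h + c + 1 \;=\; a+b+1.\]
Because this exceeds $a + b$, the induced trace linear system on $r \cong \PP^1$ of degree $a+b$ is empty, and the differential Horace method (together with the standard inclusion obtained by multiplying by the linear form cutting out $r$) gives
\[\dim \caL_{a+b}(\widetilde X) \;=\; \dim \caL_{a+b-1}({\rm Res}_r(\widetilde X)).\]
For the second reduction, I observe that on ${\rm Res}_r(\widetilde X)$ the line $\overline{Q_1 Q_2}$ meets only $Q_1$ (multiplicity $a$) and $Q_2$ (multiplicity $b$) — no $P_i$ lies on it by genericity — so its trace has degree $a+b > a+b-1$. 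By a standard B\'ezout argument, $\overline{Q_1 Q_2}$ is then a fixed component, giving the reduction to ${\rm Res}_{\overline{Q_1Q_2}} {\rm Res}_r(\widetilde X)$ in degree $a+b-2$.

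For part (3), I compute the differential residues from their definitions. At each $P_i$ specialized on $r$ with level $t_i \in \{0,1,2\}$ (matching slice degree $3,2,1$), the residue ${\rm Res}_r^{t_i}(3P_i)$ is respectively $2P_i$, $D_r^{(3,1)}(P_i)$, or $D_r^{(3,2)}(P_i)$, in accord with the worked examples of Section \ref{sec:Horace}. Removing $\overline{Q_1Q_2}$ reduces the multiplicities at $Q_1, Q_2$ by one. Matching the output against the five formulas in (3) case by case is routine.

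The main obstacle is combinatorial rather than conceptual: one must track the five cases and verify the trace-count in each. The slice degrees are chosen deliberately to land on exactly $a+b+1$, minimally forcing $r$ to be a fixed component while preserving as much structure in the residue as possible — this is essential for the inductive step on $b$ carried out later in the overall strategy, since a cruder standard-trace approach would remove too much to be usable downstream.
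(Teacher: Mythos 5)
Your proposal is correct and follows essentially the same route as the paper: the arithmetic verification that $x+y+1=2h+1\leq s_1$, the count $3x+2y+\deg(\caS_r(3P_{x+y+1}))=a+b+1$ forcing first $r$ and then $\overline{Q_1Q_2}$ to be fixed components (so the system drops to degree $a+b-2$ on the double residue), and the case-by-case computation of the differential residues $2P_i$, $D_r^{(3,1)}(P_i)$, $D_r^{(3,2)}(P_i)$. Your argument for part (1), minimizing $(a+1)(b+1)$ at $b=4$ and reducing to $13h+5c\geq 21$ with $h\geq 2$, is a slightly different but equally valid estimate than the one in the paper.
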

\begin{proof}
(i) It is enough to show that $\frac{(a+1)(b+1)}{6} \geq x+y+1 = 2h+1$. Since 
\begin{align*}\frac{(a+1)\cdot (b+1)}{6}- ( 2h +1 )
& \geq
\frac{a(5b-7)-7b-25 }{30} \geq\frac{(b-4)(5b+6)-1 }{30},
\end{align*}
then, in case  $b> 4$, we are done. For  $b=4$, we have $a \geq 6$, hence
$a(5b-7)-7b-25 = 13a -53>0$.

 (ii) First, by B\'ezout's Theorem, we prove that $r$ is a fixed component for $\caL_{a+b}(\widetilde{X})$. Now,  for a general $C \in \caL_{a+b}(\widetilde{X})$, we have 
 $$\deg (r \cap C) =
  3x+2y+\deg(\caS_r(3P_{x+y+1})) = 5h + c + 1 = a+b+1.
 $$
 So, we remove the line $r$. Since in ${\rm Res}_r(\widetilde{X})$ the points $Q_1$ and $Q_2$ have still multiplicity $a$ and $b$, respectively, the line $\overline{Q_1Q_2}$ is a fixed component for $\caL_{a+b-1}\left({\rm Res}_r(\widetilde{X})\right)$, and we are done.
 
 (iii) Easily follows from the definition of $\widetilde{X}$.
\end{proof}

\begin{lemma}\label{lemma: main triple 2}
 Notation as in Lemma \ref{lemma: main triple 1}. Denote by $T := {\rm Res}_{\overline{Q_1Q_2}}\left({\rm Res}_r(\widetilde{X})\right)$. Let $\widetilde{T}$ be a specialization of $T$ such that, in cases $c=1$, $\widetilde{T}$ has also $P_{x+y+1}$ lying generically on the line $r$, and, in case $c=3,4$, $\widetilde{T}$ has also $P_{x+y+2}$ lying on the line $r$, with the following degrees
$$
  \deg(\caS_r(3P_i))  = \begin{cases} 2 & \text{ for } i =1,\ldots,x; \\
  3 & \text{ for } i = x+1,\ldots,x+y; \end{cases} 
  $$
  $$
  \deg(\caS_r(3P_{x+y+1})) = \begin{cases}
    2 & \text{ for } c = 1; \\
   3 & \text{ for } c = 2; \\
   3 & \text{ for } c = 3; \\
   2 & \text{ for } c = 4.
  \end{cases} \quad \quad
  \deg(\caS_r(3P_{x+y+2})) = \begin{cases}
 1& \text{ for } c = 3; \\
   3 & \text{ for } c = 4.
  \end{cases}
$$
 Denote by $ W:= {\rm Res}_{\overline{Q_1Q_2}}\left({\rm Res}_r(\widetilde{T})\right)$.
 Then:
 \begin{enumerate}

\item[(i)]  for $c=3,4,$
$$x+y+2 \leq s_1 = \left\lfloor \frac{(a+1)\cdot (b+1)}{6} \right\rfloor;$$
 \item[(ii)]  $
  \dim\caL_{a+b-2}(\widetilde{T}) = \dim\caL_{a+b-4}\left({\rm Res}_{\overline{Q_1Q_2}}\left({\rm Res}_r(\widetilde{T})\right)\right);$
  \item[(iii)]$W= (a-2)Q_1 + (b-2)Q_2 + P_1 + \ldots + P_{2h}
+ $
$$+\begin{cases}
   3P_{2h+1}\ldots + 3P_{s}  & \text{ for } c = 0; \\
  D_r^{(3,1)}(P_{2h+1})+  3P_{2h+2}\ldots + 3P_{s}  & \text{ for } c = 1; \\
  D_r^{(2)}(P_{2h+1})+  3P_{2h+2}\ldots + 3P_{s} & \text{ for } c = 2; \\
   P_{2h+1} + D_r^{(3,2)}(P_{2h+2})+  3P_{2h+3}\ldots + 3P_{s} & \text{ for } c = 3;  \\
 P_{2h+1} +  2P_{2h+2}+  3P_{2h+3}\ldots + 3P_{s} & \text{ for } c = 4;
 \end{cases}$$
 and
$$
{\rm Res}_r(W) =(a-2)Q_1 + (b-2)Q_2
+ \begin{cases}
   3P_{2h+1}\ldots + 3P_{s}  & \text{ for } c = 0; \\
  P_{2h+1}+  3P_{2h+2}\ldots + 3P_{s}  & \text{ for } c = 1; \\
   3P_{2h+2}\ldots + 3P_{s} & \text{ for } c = 2; \\
  + D_r^{(2)}(P_{2h+2})+  3P_{2h+3}\ldots + 3P_{s} & \text{ for } c = 3;  \\
 P_{2h+2}+  3P_{2h+3}\ldots + 3P_{s} & \text{ for } c = 4.
 \end{cases}
$$

\end{enumerate}

\end{lemma}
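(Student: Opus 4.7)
The plan is to mirror the three-step structure of Lemma~\ref{lemma: main triple 1}, applied now to the scheme $T$ with one further round of the differential Horace method, verifying (i), (ii), and (iii) case by case in $c\in\{0,1,2,3,4\}$.

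For part (i), the inequality $\frac{(a+1)(b+1)}{6}\geq 2h+2$ for $c\in\{3,4\}$ reduces, after substituting $a+b=5h+c$ and expanding, to a linear estimate in $a$ and $b$. Using $b\geq 4$ and $a\geq b$, the same manipulation as in Lemma~\ref{lemma: main triple 1}(i) establishes the bound when $b\geq 5$, while the remaining subcase $b=4$ (where $a+b\geq 10$ forces $a\geq 6$) is verified by direct substitution.

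For part (ii), I would apply B\'ezout's theorem twice, as in Lemma~\ref{lemma: main triple 1}(ii). First, I compute $\deg {\rm Tr}_r(\widetilde{T})$: the contributions are $2x$ from the points $2P_1,\ldots,2P_x$ (supported on $r$), $3y$ from the schemes $D_r^{(3,1)}(P_{x+1}),\ldots,D_r^{(3,1)}(P_{x+y})$, and the additional specializations account for $\deg(\caS_r(3P_{x+y+1}))$ in the cases $c\geq 1$ and $\deg(\caS_r(3P_{x+y+2}))$ in the cases $c\in\{3,4\}$. A short case-by-case count shows the total equals exactly $5h+c-1=a+b-1$, which strictly exceeds $a+b-2$, forcing $r$ to be a fixed component of $\caL_{a+b-2}(\widetilde{T})$. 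After stripping $r$, the scheme ${\rm Res}_r(\widetilde{T})$ still contains $(a-1)Q_1+(b-1)Q_2$; a curve of degree $a+b-3$ through these two fat points meets $\overline{Q_1Q_2}$ in at least $(a-1)+(b-1)=a+b-2>a+b-3$ points, so $\overline{Q_1Q_2}$ must also be fixed. Removing both lines yields the claimed equality of dimensions.

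For part (iii), the explicit form of $W$ and ${\rm Res}_r(W)$ is read off from the two residuations using the local slice rules: a $2P_i$ supported on $r$ residuates to a simple point $P_i$; a $D_r^{(3,1)}(P_i)$ residuates to a simple point $P_i$; a $D_r^{(3,2)}(P_i)$ residuates to $D_r^{(2)}(P_i)$; and the newly specialized schemes at $P_{x+y+1}$ (and at $P_{x+y+2}$ for $c\in\{3,4\}$) produce the listed residual tails. The subsequent residuation with respect to $\overline{Q_1Q_2}$ drops the multiplicities at $Q_1$ and $Q_2$ by one, yielding exactly the stated $W$. The formula for ${\rm Res}_r(W)$ then follows by removing one further slice from the points remaining on $r$. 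The hard part will be the case-by-case bookkeeping: the slice degrees at $P_{x+y+1}$ and $P_{x+y+2}$ have been tuned so that $\deg {\rm Tr}_r(\widetilde{T})$ lands at $a+b-1$ on the nose for every $c$, which is precisely what makes $r$ a fixed component of $\caL_{a+b-2}(\widetilde{T})$ without overshooting. Once this tuning is verified, the B\'ezout step and the slice-level computations of the residuals are mechanical; the subtle point, and the main content of the lemma, is that this is the unique tuning compatible with leaving a residue scheme on which the induction in the next step of the argument can be carried through.
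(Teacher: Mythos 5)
Your proposal follows the paper's own argument essentially verbatim: the numerical estimate for (i), the Bézout step for (ii) via the trace degree on $r$ summing to $a+b-1 > a+b-2$ and then the line $\overline{Q_1Q_2}$ being fixed since $(a-1)+(b-1) > a+b-3$, and the mechanical slice-by-slice bookkeeping for (iii). The outline is correct; the only superfluous element is the closing claim that the chosen slice degrees are the \emph{unique} admissible tuning, which is neither needed nor established, but it plays no role in the proof.
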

\begin{proof}
(i) It is enough to prove that $\frac{(a+1)(b+1)}{6} \geq x+y+2 = 2h + 2$. Since $b\geq 4$, 
\begin{align*}
\frac{(a+1)\cdot (b+1)}{6}- ( 2h +2 ) 
& \geq
\frac{a(5b-7)-7b+36-55 }{30}\geq \frac{5b^2-14b-19 }{30}.
\end{align*}

(ii)
 We prove that $r$ is a fixed component for $\caL_{a+b-2}(\widetilde{T})$. Now,  for a general $C \in \caL_{a+b-2}(\widetilde{T})$, we have 
 \begin{align*}\deg (r \cap C) &=
  2x+3y+\deg(\caS_r(3P_{x+y+1})) +\deg(\caS_r(3P_{x+y+2})) = 5h + c - 1 = a+b-1.
 \end{align*}
So, by B\'ezout's Theorem, we may remove the line $r$.
 Since in ${\rm Res}_r(\widetilde{T})$ the points $Q_1$ and $Q_2$  have multiplicity $a-1$ and $b-1$, respectively, we have that the line $\overline{Q_1Q_2}$ is a fixed component of $\caL_{a+b-3}\left({\rm Res}_r(\widetilde{T})\right)$.

(iii) Easily follows from Lemma \ref{lemma: main triple 1}(iii) and the definition of $\widetilde{T}$.
\end{proof}

We are ready to complete our computations in the case of triple points. The final result is the following.

\begin{theorem} \label{thm:main_triple}
Let $X = aQ_1 + bQ_2 + 3P_1 + \ldots + 3P_s \subset \PP^2$, with $a\geq b\geq1$. Then, 
$$
	\HF_X(a+b) = \min\left\{{a+b+2 \choose 2} , {a+1 \choose 2} + {b+1 \choose 2} + 6s\right\},
$$
as expected, except for
\begin{enumerate}
\item $b = 1$ and $s < \frac{2}{5}(a+1)$, where $\HF_X(a+1) = {a+1 \choose 2} + 1 + 5s$;
\item  $s$ odd, say $s = 2k+1$, and 
\begin{enumerate}
	\item $(a,b) = (4k+1,2)$, where $\HF_X(4k+3) = {a+b+2 \choose 2} - 1$;
	\item $(a,b) = (3k,3)$, where $\HF_X(3k+3) = {a+b+2 \choose 2} - 1$;
	\item $(a,b) = (3k+1,3)$, where $\HF_X(3k+4) = {a+1 \choose 2} + {b+1 \choose 2} + 6s- 1$;
\end{enumerate}
	\item $s = 5$ and $(a,b) = (5,4)$, where $\HF_X(9) = 54$, instead of $55$.
\end{enumerate}
 \end{theorem}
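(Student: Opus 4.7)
My plan is to reduce to the two extremal populations $s = s_1 = \lfloor (a+1)(b+1)/6 \rfloor$ and $s = s_2 = \lceil (a+1)(b+1)/6 \rceil$ (Remark~\ref{remark: extremal}), peel off small bi-degrees, and then run an induction on $b$ built on the two specialization lemmas already prepared. For $b \in \{1, 2, 3\}$ the formula is a direct reformulation of Theorem~\ref{thm:main_any_m} with $m = 3$: using Lemma~\ref{lem:multiproj-aff-proj} and the identity $\HF_X(a+b) = \HF_\XX(a,b) + \binom{a+1}{2} + \binom{b+1}{2}$, each exception in Theorem~\ref{thm:main_any_m} translates exactly into the corresponding item (1), (2a), (2b), (2c) of the present statement. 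The two remaining small bi-degrees $(4,4)$ and $(5,4)$ (both with $a+b<10$) are handled by Lemmas~\ref{(4,4)} and \ref{(5,4)}, the latter producing the sporadic exception (3).

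For the main range $a \geq b \geq 4$ and $a + b \geq 10$, I would induct on $b$. Writing $a + b = 5h + c$ with $0 \leq c \leq 4$, Lemma~\ref{lemma: main triple 1} specializes $X$ to $\widetilde X$ so that, by Bézout, the line $r$ (carrying the prescribed slices) and then $\overline{Q_1 Q_2}$ become fixed components, whence $\dim\caL_{a+b}(\widetilde X) = \dim\caL_{a+b-2}(T)$ with $T$ of bi-type $(a-1,b-1)$. For $c \in \{1,2,3,4\}$ I would iterate with Lemma~\ref{lemma: main triple 2}, dropping to $\dim\caL_{a+b-4}(W)$, where $W = (a-2)Q_1 + (b-2)Q_2 + P_1 + \ldots + P_{2h} + (\text{residual triple points})$. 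A final Castelnuovo step on $r$ then separates $W$ into the $\PP^1$-trace (a zero-dimensional scheme of degree $\approx 2h$, whose contribution is computed directly) and the residue ${\rm Res}_r(W) = (a-2)Q_1 + (b-2)Q_2 + 3P_1 + \ldots + 3P_{s'}$, which is of the same shape as $X$ but with $b' = b-2 < b$: the inductive hypothesis applies.

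Assembling the inequalities: for $s = s_1$ the chain
\[
\dim\caL_{a+b}(X) \;\leq\; \dim\caL_{a+b-2}(T) \;\leq\; \dim\caL_{a+b-4}(W)
\]
combined with the inductively known dimension of $\caL_{a+b-5}({\rm Res}_r(W))$ and a trace count on $r$ gives the upper bound matching the virtual dimension; the lower bound is parameter-count, so the expected value is attained. For $s = s_2$, the inductive emptiness of $\caL_{a+b-5}({\rm Res}_r(W))$ combined with Lemma~\ref{lem:collinear points}(2), applied to the $2h$ collinear general points $P_1,\ldots,P_{2h}$ sitting on $r$ inside $W$, yields $\dim\caL_{a+b-4}(W) = 0$ and hence $\dim\caL_{a+b}(X) = 0$.

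The main obstacle is the case-by-case verification for the five residue classes $c=0,1,2,3,4$. For each one I would need to check: (i) the arithmetic inequalities $x+y+1 \leq s_1$ (resp.\ $x+y+2 \leq s_1$) ensuring the specialization has enough points available, which is already done in Lemmas~\ref{lemma: main triple 1}(i) and \ref{lemma: main triple 2}(i); (ii) that the differential Horace trace hypothesis on $r$ is met by an elementary calculation in $\PP^1$; and (iii) that the Castelnuovo estimate is tight, i.e.\ that the specialization $\widetilde X$ actually realizes equality. The delicacy is that for $c = 3, 4$ a second triple point must be pinned onto $r$, leaving little slack; the exceptional case $(5,4)$, where $a+b = 9$ lies just below the inductive threshold and the slack is exactly used up, shows that this is sharp, and I would verify that for $a + b \geq 10$ the extra unit of degree suffices to absorb the discrepancy that foiled $(5,4)$.
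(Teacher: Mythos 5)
Your overall strategy is the same as the paper's: reduce to $s_1,s_2$ via Remark \ref{remark: extremal}, dispose of $b\leq 3$ and $(4,4),(5,4)$ by the earlier results, and for $a\geq b\geq 4$, $a+b\geq 10$ use the specializations of Lemmas \ref{lemma: main triple 1} and \ref{lemma: main triple 2} to drop to $\caL_{a+b-4}(W)$ and finish with Lemma \ref{lem:collinear points} plus induction. However, there is a genuine gap in the way you close the induction. The sentence ``the inductive hypothesis applies'' conceals the actual difficulty: the statement being proved by induction has exceptions, and the residual schemes produced by the last step can land exactly on them. Concretely, after removing $r$ from $W_1$ (resp.\ $W_2$) the line $\overline{Q_1Q_2}$ is again a fixed component, so the scheme governing the induction is $(a-3)Q_1+(b-3)Q_2+(\text{triple points})$ in degree $a+b-6$, i.e.\ bi-degree $(a-3,b-3)$ --- not $(a-2,b-2)$ as you write --- and one must also control $\overline{W}_1$ at bi-degree $(a-2,b-2)$. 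If $(a-3,b-3)$ or $(a-2,b-2)$ is one of the pairs $(4k+1,2)$, $(3k,3)$, $(3k+1,3)$, $(5,4)$, or if $b-3=1$ (where triple points impose only $5$ conditions), the inductively known dimension exceeds the virtual one, and the numerical hypotheses of Lemma \ref{lem:collinear points} are no longer automatic: for $s=s_1$ one needs the strict inequality $\dim\caL_{a+b-4}(W_1+P_1+\cdots)>\dim\caL_{a+b-5}({\rm Res}_r(W_1))$ and the fact that $W_1$ itself imposes independent conditions in degree $a+b-4$; for $s=s_2$ one needs both the emptiness of $\caL_{a+b-5}({\rm Res}_r(W_2))$ (which a defect of $1$ could destroy when the expected dimension is $0$ or negative but small) and the bound $\dim\caL_{a+b-4}(W_2)\leq 2h$ (resp.\ $2h+1$). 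Verifying all of this, residue class by residue class and defective case by defective case (including the small configurations $(6,5),(9,5),(12,5)$, $(8,4)$, $(6,6)$ and the interaction with the sporadic case of Lemma \ref{(5,4)}), is the bulk of the paper's proof of Theorem \ref{thm:main_triple}; your list of obstacles (arithmetic availability of points, the Horace trace condition on $r$, tightness of the Castelnuovo estimate) does not include it, so the proposal as written would break precisely at those exceptional landings.

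A secondary omission: for $s=s_2$ you invoke Lemma \ref{lem:collinear points}(2) citing only the emptiness of the residue system, but its second hypothesis, that the dimension of the system through $W_2$ does not exceed the number of collinear simple points being added, is not a parameter count alone (the expected dimension can be negative for some small $h$), and requires the separate verification carried out in the paper's Claim 4.
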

 
\begin{proof}
The cases $b \leq 3$ and  $(a,b)=(4,4),(5,4)$ follow from Theorem \ref{thm:main}, Theorem \ref{thm:main_any_m}, Lemma \ref{(4,4)} and Lemma  \ref{(5,4)}, respectively.  Let $b\geq 4$, $a+b \geq 10$ and set $a + b = 5h + c$, with $0 \leq c \leq 4.$
By Remark \ref{remark: extremal}, we need to show:
\begin{itemize}
\item for $s = s_1$, $\caL_{a+b}(X)$ has dimension as expected, $\dim\caL_{a+b}(X) = (a+1)(b+1)-6s_1$;
\item for $s = s_2$, $\caL_{a+b}(X)$ is empty.
\end{itemize} 
By the previous lemmas, we reduce to the linear system $\caL_{a+b-4}(W)$, where $W := {\rm Res}_{\overline{Q_1Q_2}}\left({\rm Res}_r(\widetilde{T})\right)$ is as in Lemma \ref{lemma: main triple 1} and Lemma \ref{lemma: main triple 2}. If we prove that, for $s=s_1$, $\dim\caL_{a+b-4}(W) = (a+1)(b+1)-6s_1$ and, for $s=s_2$, $\caL_{a+b-4}(W) $  is empty, then, by the semicontinuity of the Hilbert function, we are done.

\medskip
\noindent {\sc [Case $s=s_1$]}
Recall that, by Lemma \ref{lemma: main triple 1} and Lemma \ref{lemma: main triple 2}:
 $$W= (a-2)Q_1 + (b-2)Q_2+ P_1 + \ldots + P_{2h}
+ \begin{cases}
   3P_{2h+1}\ldots + 3P_{s},  \hfill \text{ for } c = 0; \\
  D_r^{(3,1)}(P_{2h+1})+  3P_{2h+2}\ldots + 3P_{s}, \hfill \text{ for } c = 1; \\
  D_r^{(2)}(P_{2h+1})+  3P_{2h+2}\ldots + 3P_{s}, \hfill \text{ for } c = 2; \\
   P_{2h+1} + D_r^{(3,2)}(P_{2h+2})+  3P_{2h+3}\ldots + 3P_{s}, \\
   \hfill \text{ for } c = 3;  \\
 P_{2h+1} +  2P_{2h+2}+  3P_{2h+3}\ldots + 3P_{s}, \hfill \text{ for } c = 4.
 \end{cases}
$$
The expected dimension of $\caL_{a+b-4}(W) $ is
\begin{align*}{\it exp}.\dim \caL_{a+b-4}(W) & = 
(a-1) (b-1)-2h  -6(s_1-2h) +2c= \\
& = (a+1)(b+1) - 6 s_1 - 2(5h+c) + 10 h + 2c = (a+1)(b+1) - 6 s_1.
\end{align*}
Thus, we need to prove that, for $s=s_1$,  $W$ imposes  independent conditions to the curves of degree $a+b-4$. We use Lemma \ref{lem:collinear points}(i). Let
 $$W_1 = \begin{cases} W-\{P_1,\ldots,P_{2h}\} & \text{ for } c = 0,1,2; \\
W-\{P_1,\ldots,P_{2h+1}\} & \text{ for } c = 3,4,
 \end{cases}
$$
that is,
 $$W_1= (a-2)Q_1 + (b-2)Q_2+
 \begin{cases}
   3P_{2h+1}\ldots + 3P_{s_1}  & \text{ for } c = 0; \\
  D_r^{(3,1)}(P_{2h+1})+  3P_{2h+2}\ldots + 3P_{s_1}  & \text{ for } c = 1; \\
  D_r^{(2)}(P_{2h+1})+  3P_{2h+2}\ldots + 3P_{s_1} & \text{ for } c = 2; \\
   D_r^{(3,2)}(P_{2h+2})+  3P_{2h+3}\ldots + 3P_{s_1} & \text{ for } c = 3;  \\
 2P_{2h+2}+  3P_{2h+3}\ldots + 3P_{s_1} & \text{ for } c = 4.
 \end{cases}
$$

\smallskip
\noindent {\it Claim 1.} {\it For $s=s_1$, 
\begin{itemize}
\item $ \dim\caL_{a+b-4}(W_1+P_1+\cdots +P_{2h-1} ) > \dim\caL_{a+b-5}({\rm Res}_r(W_1)), \quad \text{ for } c = 0,1,2; $
  \item $\dim\caL_{a+b-4}(W_1+P_1+\cdots +P_{2h} ) > \dim\caL_{a+b-5}({\rm Res}_r(W_1)), \quad \text{ for } c = 3,4.$
   \end{itemize}
   }
   
   \smallskip
 
\noindent {\it Proof of Claim 1.}
 By parameter count, we know that the left hand sides are always greater or equal than $(a+1)(b+1)-6s_1+1$ which is strictly positive, by definition of $s_1$. Since the line $\overline{Q_1Q_2}$ is a fixed component for $\caL_{a+b-5}({\rm Res}_r(W_1))$, we also have 
$$\dim\caL_{a+b-5}({\rm Res}_r(W_1))=
\dim\caL_{a+b-6}({\rm Res}_{\overline{Q_1Q_2}}({\rm Res}_r(W_1))),$$
where

$${\rm Res}_{\overline{Q_1Q_2}}({\rm Res}_r(W_1))
 =(a-3)Q_1 + (b-3)Q_2
+ \begin{cases}
   3P_{2h+1}\ldots + 3P_{s},  & \text{ for } c = 0; \\
  P_{2h+1}+  3P_{2h+2}\ldots + 3P_{s},  & \text{ for } c = 1; \\
   3P_{2h+2}\ldots + 3P_{s}, & \text{ for } c = 2; \\
  D_r^{(2)}(P_{2h+2})+  3P_{2h+3}\ldots + 3P_{s}, & \text{ for } c = 3;  \\
 P_{2h+2}+  3P_{2h+3}\ldots + 3P_{s}, & \text{ for } c = 4.
 \end{cases}
$$

	Now, we want to prove our claim by induction, but we need to be careful because we might fall in one of the defective cases we have considered above.

\smallskip
\noindent (a) {\it Non-defective case}. If we do not fall in one of the defective cases, we have that, by induction and by observing that general simple points, in the case $c = 1,4$, and a general $2$-jet, in the case $c = 3$, impose independent conditions, we have
	$$
		\dim \caL_{a+b-6}({\rm Res}_{\overline{Q_1Q_2}}({\rm Res}_r(W_1))) = 
		\begin{cases}
			\max\left\{0, (a-2)(b-2) - 6(s_1-2h) \right\}, \quad  \hfill \text{ for } c = 0; \\
			\max\left\{0, (a-2)(b-2) - 6(s_1-2h) + 5\right\},  \hfill \text{ for } c = 1; \\
			\max\left\{0, (a-2)(b-2) - 6(s_1-2h) + 6\right\},  \hfill  \text{ for } c = 2; \\
			\max\left\{0, (a-2)(b-2) - 6(s_1-2h) + 10\right\},  \hfill  \text{ for } c = 3; \\
			\max\left\{0, (a-2)(b-2) - 6(s_1-2h) + 11\right\},  \hfill  \text{ for } c = 4.
		\end{cases}
	$$
	If $\caL_{a+b-6}({\rm Res}_{\overline{Q_1Q_2}}({\rm Res}_r(W_1)))$ is empty, the claim is trivial. Otherwise, for $c = 0,1,2$,
	\begin{align}\label{eq:claim1_1}
		\dim\caL_{a+b-4}&(W_1+P_1+ \cdots +P_{2h-1}) - \dim \caL_{a+b-6}({\rm Res}_{\overline{Q_1Q_2}}({\rm Res}_r(W_1))) \nonumber \\
		& \geq (a+1)(b+1)-6s_1+1 - (a-2)(b-2) + 6(s_1-2h) - 
		\begin{cases}
			0 & \text{ for } c = 0;\\
			5 & \text{ for } c = 1;\\
			6 & \text{ for } c = 2;\\
		\end{cases} \nonumber \\
		& = 3(a+b) -12h - 2 - \begin{cases}
			0 & \text{ for } c = 0;\\
			5 & \text{ for } c = 1;\\
			6 & \text{ for } c = 2;\\
		\end{cases} \geq 
		\begin{cases}
			4 & \text{ for } c = 0;\\
			2 & \text{ for } c = 1;\\
			4 & \text{ for } c = 2;\\
		\end{cases}
	\end{align}
	similarly, for $c = 3,4$, we obtain
	\begin{equation}\label{eq:claim1_2}\dim\caL_{a+b-4}(W_1+P_1+ \cdots +P_{2h}) - \dim \caL_{a+b-6}({\rm Res}_{\overline{Q_1Q_2}}({\rm Res}_r(W_1))) 
	\geq 		
	\begin{cases}
			3, \hfill ~\text{for } c = 3;\\
			5, \hfill ~\text{for } c = 4.\\
	\end{cases}
	\end{equation}
	In particular, we obtain that the Claim 1 holds under the assumption (a).	

\smallskip
\noindent (b) {\it $b-3 = 1$}. In this case, we know that we have defect; in particular, we have
	$$
		\dim \caL_{a+b-6}({\rm Res}_{\overline{Q_1Q_2}}({\rm Res}_r(W_1))) = 
		\begin{cases}
			\max\left\{0, (a-2)(b-2) - 5(s_1-2h) \right\}, \hfill \text{ for } c = 0; \\
			\max\left\{0, (a-2)(b-2) - 5(s_1-2h) + 4\right\}, \hfill \text{ for } c = 1; \\
			\max\left\{0, (a-2)(b-2) - 5(s_1-2h) + 5\right\},  \hfill \text{ for } c = 2; \\
			\max\left\{0, (a-2)(b-2) - 5(s_1-2h) + 8\right\},  \hfill \text{ for } c = 3; \\
			\max\left\{0, (a-2)(b-2) - 5(s_1-2h) + 9\right\},  \hfill \text{ for } c = 4.
		\end{cases}
	$$
	If $\caL_{a+b-6}({\rm Res}_{\overline{Q_1Q_2}}({\rm Res}_r(W_1)))$ is empty, the claim is trivial. Otherwise, for $c = 0,1,2$,
	\begin{align*}
		\dim&\caL_{a+b-4}(W_1+P_1+ \cdots +P_{2h-1}) - \dim \caL_{a+b-6}({\rm Res}_{\overline{Q_1Q_2}}({\rm Res}_r(W_1))) \\
		& \geq (a+1)(b+1)-6s_1+1 - (a-2)(b-2) + 5(s_1-2h) - 
		\begin{cases}
			0, \hfill \text{ for } c = 0;\\
			4, \hfill \text{ for } c = 1;\\
			5, \hfill \text{ for } c = 2;\\
		\end{cases} \\
		& = 3(a+4) - 10h - 2 - s_1 - \begin{cases}
			0 & \text{ for } c = 0;\\
			5 & \text{ for } c = 1;\\
			6 & \text{ for } c = 2;\\
		\end{cases} \\
		& = 5h - \frac{5(a+1)}{6} - 
		\begin{cases}
			2 & \text{ for } c = 0;\\
			3 & \text{ for } c = 1;\\
			1 & \text{ for } c = 2;\\
		\end{cases} \geq \frac{5h - 8}{6} > 0;
	\end{align*}
	similarly, for $c = 3,4$, we obtain
	\begin{align*}
	\dim\caL_{a+b-4}(W_1+P_1+ &\cdots +P_{2h}) - \dim \caL_{a+b-6}({\rm Res}_{\overline{Q_1Q_2}}({\rm Res}_r(W_1)))  \\
	& \geq 		
	5h - \frac{5(a+1)}{6} - 
		\begin{cases}
			1 & \text{ for } c = 3;\\
			-1 & \text{ for } c = 4;\\
		\end{cases} \geq \frac{5h - 6}{6} > 0.
	\end{align*}
	In particular, we obtain that the Claim 1 holds under the assumption (b).
	
\smallskip
\noindent (c) {\it $b-3 \neq 1$ and algebraic defect {\rm (for the definition, see Section \ref{sec:2})} equal to $1$.} In these cases, since the algebraic defect is equal to $1$, we may adapt the computations to obtain \eqref{eq:claim1_1} and \eqref{eq:claim1_2}. In particular, we obtain
 \begin{align*}
		\dim\caL_{a+b-4}(W_1+P_1+ \cdots +P_{2h-1}) - \dim & \caL_{a+b-6}({\rm Res}_{\overline{Q_1Q_2}}({\rm Res}_r(W_1))) \geq 
		\begin{cases}
			3, \hfill \text{ for } c = 0;\\
			1, \hfill \text{ for } c = 1;\\
			3, \hfill \text{ for } c = 2;\\
			2, \hfill \text{ for } c = 3;\\
			4, \hfill \text{ for } c = 4.\\
		\end{cases}
 \end{align*}
	Hence, Claim 1 holds also in this case.
	
	\smallskip
	\noindent (d) {\it $b-3 \neq 1$ and algebraic defect equal to $3$}. In this case, we cannot adapt the previous computations. This defective case would appear only for $(a-3,b-3) = (3k,3)$, for some positive integer $k$, and if $(a+1)(b+1) - 6s_1 + 1 = 1$. Therefore, only if $7(3k+4) = 6s_1$. This is a contradiction, cause the left hand side is clearly not divisible by $3$.

	\smallskip
	Therefore, Claim 1 is proved.

\bigskip
\noindent {\it Claim 2.} {\it For $s=s_1$, 
$W_1$ gives independent conditions to the curves of degree $a+b-4$.}

\medskip
\noindent {\it Proof of Claim 2.}
  Let
$$\overline{W}_1= 
(a-2)Q_1 + (b-2)Q_2+
 \begin{cases}
   3P_{2h+1}\ldots + 3P_{s}  & \text{ for } c = 0,1,2; \\
3P_{2h+2}+  3P_{2h+3}\ldots + 3P_{s} & \text{ for } c = 3,4.
 \end{cases}
$$
Since $\overline{W}_1 \supset W_1$, by Lemma \ref{lemma: special}, it is enough to check that the claim holds for $\overline{W}_1$.
  
  Again, we need to be careful because we might fall in a defective case. We consider them separately. 
	
	\smallskip
	\noindent (a) {\it $(a-2,b-2) = (3k+1,3)$, for some positive integer $k$.} In this case, we have that $s_1 = 3k+4$ and $3k+8 = 5h+c$. Therefore, if $c = 0,1,2$,
		$$
			s_1 - 2h < 2k+1 \Longleftrightarrow k > \begin{cases}
				1 & \text{ for } c = 0; \\
				3 & \text{ for } c = 1; \\
				5 & \text{ for } c = 2; \\
			\end{cases}
		$$
		and, if $ c = 3,4$,
		$$
			s_1 - 2h - 1 < 2k+1 \Longleftrightarrow k > \begin{cases}
				0 & \text{ for } c = 3; \\
				2 & \text{ for } c = 4. \\
			\end{cases}
		$$
		Therefore, for large values of $k$, we do not risk to fall in the defective cases. Hence, we are left with the cases $(a,b) = (6,5),(9,5),(12,5)$ where it can be easily checked by specialization that $W_1$ impose independent conditions.
		
		\smallskip
		\noindent (b) $(a-2,b-2) \neq (3k+1,3)$. In this case, observe that, if $c = 0,1,2$,
		$$
			(a-1)(b-1) - 6(s_1-2h) = (a+1)(b+1) - 6s_1 +2h - 2c \geq 2h - 2c \geq 0;
		$$
		and, if $c = 3,4$,
		\begin{align*}
			(a-1)(b-1) - 6(s_1-2h-1) & = (a+1)(b+1) - 6s_1 +2h - 2c + 6 \\ & \geq 2h - 2c + 6 \geq 2.
		\end{align*}
		Therefore, since in the defective cases, we have that the dimension of the linear system is equal to $1$, then we are left to only check the case where $c = 2$, $h = 2$ and $(a+1)(b+1) - 6s_1 = 0$. Since it has to be $a + b = 12$ and the defective cases $(a-2,b-2) = (3k,3),(5,4)$ do not satisfy this condition, a fortiori, we have that that $b - 2 = 2$ and, consequently, $(a,b) = (8,4)$ with $s_1 = 7$. However, in this case, $(a+1)(b+1) - 6s_1 \neq 0$ and we obtain a contradiction.
 
  \smallskip
	Therefore, Claim 2 is proved.

  \bigskip
Now by Lemma \ref{lem:collinear points}(i), with $W_1=Z$, and Claim 1   it follows that, 
for $s=s_1$, 
$$
\dim\caL_{a+b-4}(W) =
\begin{cases} \dim\caL_{a+b-4}(W_1)-2h & \text{ for } c = 0,1,2; \\
\dim\caL_{a+b-4}(W_1)-2h-1 & \text{ for } c = 3,4.
 \end{cases}
$$
By Claim 2 and easy computation, it follows that $\dim\caL_{a+b-4}(W) = (a+1)(b+1)-6s_1$.

Hence, {\sc Case} $s = s_1$ is proved.

\bigskip
\noindent {\sc [Case $s=s_2$]} We need to prove that, for $s=s_2$, the linear system $\caL_{a+b-4}(W) $ is empty. 

If $s_2=s_1$ the conclusion follows from the previous case. So, assume that $s_2>s_1$.
We use Lemma \ref{lem:collinear points}(ii). Let
$$W_2 = \begin{cases} W-\{P_1,\ldots,P_{2h}\} & \text{ for } c = 0,1,2; \\
W-\{P_1,\ldots,P_{2h+1}\} & \text{ for } c = 3,4;
 \end{cases}
$$
that is,
 $$W_2= (a-2)Q_1 + (b-2)Q_2+
 \begin{cases}
   3P_{2h+1}\ldots + 3P_{s_2}  & \text{ for } c = 0; \\
  D_r^{(3,1)}(P_{2h+1})+  3P_{2h+2}\ldots + 3P_{s_2}  & \text{ for } c = 1; \\
  D_r^{(2)}(P_{2h+1})+  3P_{2h+2}\ldots + 3P_{s_2} & \text{ for } c = 2; \\
   D_r^{(3,2)}(P_{2h+2})+  3P_{2h+3}\ldots + 3P_{s_2} & \text{ for } c = 3;  \\
 2P_{2h+2}+  3P_{2h+3}\ldots + 3P_{s_2} & \text{ for } c = 4.
 \end{cases}
$$
 
\bigskip
\noindent {\it Claim 3.} {\it For $s = s_2$, the linear system $\caL_{a+b-5}({\rm Res}_r(W_2))$ is empty.} 
 
\medskip
\noindent {\it Proof of Claim 3.}
As in the proof of Claim 1,  since the line $\overline{Q_1Q_2}$ is a fixed component for $\caL_{a+b-5}({\rm Res}_r(W_2))$, we have 
$$\dim\caL_{a+b-5}({\rm Res}_r(W_2))=
\dim\caL_{a+b-6}({\rm Res}_{\overline{Q_1Q_2}}({\rm Res}_r(W_2))),$$
where
$${\rm Res}_{\overline{Q_1Q_2}}({\rm Res}_r(W_2))
 =(a-3)Q_1 + (b-3)Q_2
+ \begin{cases}
   3P_{2h+1}\ldots + 3P_{s_2}  & \text{ for } c = 0; \\
  P_{2h+1}+  3P_{2h+2}\ldots + 3P_{s_2}  & \text{ for } c = 1; \\
   3P_{2h+2}\ldots + 3P_{s_2} & \text{ for } c = 2; \\
  D_r^{(2)}(P_{2h+2})+  3P_{2h+3}\ldots + 3P_{s_2} & \text{ for } c = 3;  \\
 P_{2h+2}+  3P_{2h+3}\ldots + 3P_{s_2} & \text{ for } c = 4.
 \end{cases}
$$
Note that in ${\rm Res}_{\overline{Q_1Q_2}}({\rm Res}_r(W_2))$ there are $s'$ triple points, where
$$s' = 
\begin{cases}
   s_2- 2h  & \text{ for } c = 0; \\
s_2- 2h-1 & \text{ for } c = 1,2; \\
   s_2- 2h-2 & \text{ for } c = 3,4.
 \end{cases}
$$

Again, we need to be careful to distinguish when we fall in the defective cases.

\smallskip
\noindent (a) {\it $(a-3,b-3)$ is not a defective case.} In this case, we have (recall that $a + b = 5h+c$)
	\begin{align*}
		(a-2)(b-2) - 6s' & = (a+1)(b+1) - 3(a+b) + 3 - 6s' = \\
		& = (a+1)(b+1) - 6s_2 -3h - 3c + 3 +
		\begin{cases}
			0 & \text{ for } c = 0; \\
			6 & \text{ for } c = 1,2; \\
			12 & \text{ for } c = 3,4; \\
		\end{cases} \\
		& \leq -3h + 
		\begin{cases}
			3 & \text{ for } c = 0; \\ 
			6 & \text{ for } c = 1,2; \\
			6 & \text{ for } c = 3,4. \\
		\end{cases}
	\end{align*}
	Since $h \geq 2$, we have $(a-2)(b-2) - 6s' \leq 0$ and Claim 3 holds under assumption (a).

\smallskip
\noindent (b) {\it $b-3 = 1$}. In this case, we know that triple points given $5$ condition instead of $6$. Since $a + b = 5h+c$, we have $s_2 = \left\lceil \frac{5(5h+c-3)}{6} \right\rceil$. Therefore,
	\begin{align*}
		2(a-2) - 5s' & = -5\left\lceil \frac{5(5h+c-3)}{6} \right\rceil + 10h + 2c - 12 + 
		\begin{cases}
			10h & \text{ for } c = 0; \\
			10h+5 & \text{ for } c = 1,2; \\
			10h+10 & \text{ for } c = 3,4; \\
		\end{cases} \\
		& = \begin{cases}
			-5\left\lceil \frac{h+3}{6}\right\rceil + 3  & \text{ for } c = 0; \\
			-5\left\lceil \frac{h+2}{6}\right\rceil + 5  & \text{ for } c = 1; \\
			-5\left\lceil \frac{h+1}{6}\right\rceil + 2  & \text{ for } c = 2; \\			
			-5\left\lceil \frac{h}{6}\right\rceil + 2  & \text{ for } c = 3; \\			
			-5\left\lceil \frac{h+5}{6}\right\rceil + 6  & \text{ for } c = 4. \\			
		\end{cases}
	\end{align*}
	Since $h \geq 2$, we have $(a-2)(b-2) - 5s' \leq 0$ and Claim 3 holds under assumption (b).
	
	\smallskip
	\noindent (c) {\it $b - 3 = 2$}. In this case, we have that $s_2 = s_1$ and it follows from {\sc Case} $s = s_1$.
	
	\smallskip
	\noindent (d) {\it $(a-3,b-3) = (3k,3)$}. If $k = 1$, we have $(a,b) = (6,6)$, $s_2 = 9$ and $s' = 4 > 2k+1$. Hence, in this case Claim 3 holds. Assume now $k \geq 2$. Then, $a+b \geq 15$ and $h \geq 3$. Moreover, since $a = 3k+3$ and $ a+ b = 5h + c$, we obtain $k = \frac{5h+c-9}{3}$. Therefore,
	\begin{align*}
		s' - (2k+1) & = \left\lceil \frac{7(5h+c-5)}{6} \right\rceil -2h - \frac{10h+2c-15}{3} + \begin{cases}
		0 & \text{ for } c = 0;\\
		-1 & \text{ for } c = 1,2;\\
		-2 & \text{ for } c = 3,4;\\
\end{cases}
\\ & \geq  \frac {3h+3c-5}6 + \begin{cases}
0 & \text{ for } c = 0; \\
-1  & \text{ for } c = 1,2; \\
 -2 & \text{ for } c = 3,4;
 \end{cases} =  \begin{cases}
\frac{3h-5}{6}  & \text{ for } c = 0,2,4; \\
\frac{3h-8}{6}  & \text{ for } c = 1,3.
 \end{cases}
	\end{align*}
	 Hence, since $h \geq 3$, we have that $s' > 2k+1$ and Claim 3 holds under assumption (d).
	 
	 \smallskip
	 \noindent (e) {\it $b-3 = 3$ and $a - 3 = 3k+1$.} Since $a = 3k+4$ and $a+b = 5h+c$, we get $k = \frac{5h+c-10}{3}$. Therefore,
	 \begin{align*}
	 	s'-(2k+1) & = \left \lceil  \frac {35h +7c -35} 6 \right \rceil -2h -  \frac {10h+2c-17}3 +  \begin{cases}
0 & \text{ for } c = 0; \\
-1  & \text{ for } c = 1,2; \\
 -2 & \text{ for } c = 3,4;
 \end{cases} \\
 & \geq  \frac {3h+3c-1}6 + \begin{cases}
0 & \text{ for } c = 0; \\
-1  & \text{ for } c = 1,2; \\
 -2 & \text{ for } c = 3,4;
 \end{cases} \ \ =  \begin{cases}
\frac{3h-1}{6}  & \text{ for } c = 0,2,4; \\
\frac{3h-4}{6}  & \text{ for } c = 1,3.
 \end{cases}
	 \end{align*}
	 Hence, since $h \geq 2$, we have that $s' > 2k+1$ and Claim 3 holds under assumption (e).
	 
	 \smallskip
	 \noindent (f) {\it $(a-3,b-3) = (5,4)$}. In this case, we have $h = 3$, $c = 0$ and $s_2 = 12$. Hence, $s' = s_2 - 2h = 6$, so we do not fall in the defective case and Claim 3 holds under assumption (f).

\smallskip
Hence, Claim 3 is completely proved.

\bigskip
\noindent {\it Claim 4.} {\it For $s = s_2$,
 $$\dim\caL_{a+b-4}(W_2)\leq\begin{cases} 2h  & \text{ for } c = 0,1,2; \\
2h+1 & \text{ for } c = 3,4.
 \end{cases}
$$}

\noindent {\it Proof of Claim 4.}
\textcolor{black}{
Since $a + b = 5h + c$, the expected dimension of  $\caL_{a+b-4}(W_2)$ is 
\begin{align*}
\textit{exp}.\dim\caL_{a+b-4}(W_2) &= \begin{cases}
\max \{0, (a-1)(b-1) - 6(s_2 -2h) +  0\}  & \text{ for } c = 0; \\
\max \{0, (a-1)(b-1) - 6(s_2 -2h) +  2 \}  & \text{ for } c = 1; \\
\max \{0, (a-1)(b-1) - 6(s_2 -2h) +  4 \}  & \text{ for } c = 2; \\
\max \{0, (a-1)(b-1) - 6(s_2 -2h) +  7 \}  & \text{ for } c = 3; \\
\max \{0, (a-1)(b-1) - 6(s_2 -2h) +  9 \} & \text{ for } c = 4;
 \end{cases}\\
 & = 
\begin{cases}
\max \{0, (a+1)(b+1) - 6s_2 +  
2h  \}  & \text{ for } c = 0,1,2; \\
\max \{0, (a+1)(b+1) - 6s_2 + 2h+1  \} & \text{ for } c = 3,4.
 \end{cases}
\end{align*}
Now, if $h > 2$ or if $h = 2$ and $c = 3,4$, we have that $(a+1)(b+1) - 6s_2 \geq -5$. Therefore, 
$$
\textit{exp}.\dim\caL_{a+b-4}(W_2)  = (a+1)(b+1) - 6s_2 +  \begin{cases}
2h  & \text{ for } c = 0,1,2; \\
2h+1 & \text{ for } c = 3,4.
 \end{cases}
 $$
 If $h = 2$ and $c = 0,1,2$, since we assume $s_2 > s_1$, we are left only with the following cases: $(a,b) = (6,4), (7,4), (6,6), (8,4)$. Among these cases, only for $(a,b) = (4,4)$ we have that 
 $$
 	(a+1)(b+1) - 6s_2 + 4 = 49 - 54 + 4 = -1 < 0;
 $$
 but, since $(a-2,b-2) = (4,4)$ is a non-defective case, we have $\dim\caL_{a+b-4}(W_2) = 0$. Therefore, excluding this case, we have
 $$
 	\dim\caL_{a+b-4}(W_2) \leq {\it exp}.\dim\caL_{a+b-4}(W_2) \leq \begin{cases}
2h  & \text{ for } c = 0,1,2; \\
2h+1 & \text{ for } c = 3,4;
 \end{cases}
 $$
 because, since $s_2 > s_1$, we have $(a+1)(b+1) - 6s_2 \leq -1$.  Hence, Claim 4 is proved.
}

\medskip
Now, by  Claim 3, Claim 4 and Lemma \ref{lem:collinear points}(ii), with $W_2=Z$, it follows that also {\sc Case} $s = s_2$ is proved.
\end{proof}

Therefore, as a direct corollary of Theorem \ref{thm:main_triple}, we obtain the following formulas for the complete bi-graded Hilbert function for schemes of triple points.

\begin{theorem}\label{thm:main_triple_multi}
	Let $\XX = 3P_1 + \ldots + 3P_s \subset \PP^1 \times \PP^1$. Then,
	$$
		\HF_\XX(a,b) = \min\left\{(a+1)(b+1), 6s\right\},
	$$
	except for
	\begin{enumerate}
	\item $b=1$ and $s<\frac{2}{5}(a+1)$, where $\HF_{\XX}(a,1) = 5s$;
\item  $s$ odd, say $s = 2k+1$, and \\
	 	$\quad (a,b) = (4k+1,2)$, where $\HF_\XX(4k+1,2) = (a+1)(b+1) - 1$; \\
   		$\quad (a,b) = (3k,3)$, where $\HF_\XX(3k,3) = (a+1)(b+1) - 1$;\\
	 	$\quad (a,b) = (3k+1,3)$, where $\HF_\XX(3k+1,3) = 6s- 1$; \\
	\item $s = 5$ and $(a,b) = (5,4)$, where $\HF_X(5,4) = 29$.\\
\end{enumerate}
\end{theorem}
\begin{proof}
	It directly follows from Lemma \ref{lem:multiproj-aff-proj} and Theorem \ref{thm:main_triple}.
\end{proof}
\appendix
\section{{\it Macaulay2} code}\label{sec: appendix}
In this appendix, we implement our results with the algebra software {\it Macaulay2} \cite{M2}. With the standard tools of the software, we would need to first construct the ideal of fat points by using random coordinates and then we would compute the Hilbert function with the implemented command {\tt hilbertFunction}. These computations, since they involve Gr\"obner basis, might not even finish in reasonable time. Here is a possible code to try this.

{\small
\begin{verbatim}
	-- INPUT: s = number of points;
	--        m = multiplicity;
	--        a,b = bi-degree;
	S = QQ[x_0,x_1,y_0,y_1, Degrees => {{1,0},{1,0},{0,1},{0,1}}]
	I = intersect for i from 1 to s list 
         (ideal(random(QQ)*x_0 + random(QQ)*x_1, 
                        random(QQ)*y_0+random(QQ)*y_1))^m;
	hilbertFunction({a,b},I)
\end{verbatim}
}
\medskip

Our main results Theorem \ref{thm:main_any_m} and Theorem \ref{thm:main_triple} allow us to give a numerical function which computes the Hilbert function of schemes of general fat points in $\PP^1 \times \PP^1$ very quickly, even for large numbers, where the usual functions cannot be efficient due to the Gr\"obner basis computation. Here is a possible implementation of this in the language of the algebra software {\it Macaulay2} \cite{M2}.

\medskip
{\small
\begin{verbatim}
	-- Function which returns 
	--    the binomial coefficient (m choose k) if m is greater or equal 
	--    than k and 0 otherwise;
	Bin = method();
	Bin (ZZ,ZZ) := (m,k) -> (if m >= k then return binomial(m,k) else return 0)
	
	-- INPUT: s = number of points;
	--        m = multiplicity;
	--        a,b = bi-degree
	-- OUTPUT: (if m >= b or m <= 3) Hilbert function in bi-degree (a,b) of  
	--          a scheme of s general fat points of multiplicity m
	multiFatPoints = method()
	multiFatPoints (ZZ,ZZ,ZZ,ZZ) := (m,s,a,b) -> (
	      if (m < min(a,b) and m > 3) then (
	         return "ERROR: multiplicity has to be m >= min(a,b) or m <= 3");
	      A := max(a,b); B := min(a,b);
	      if (m >= B) then (
	         if s % 2 == 1 then (
	            k := s // 2;
	            if (0 <= A-B*k-s*(m-B) and A-B*k-s*(m-B) <= B-2) then (
	               c := A - B*k - s*(m-B);
	               return ((A+1)*(B+1) - Bin(c+2,2))
	            ) else (
	               return min((A+1)*(B+1) , s*Bin(m+1,2) - s*Bin(m-B,2)))
	       ) else (
	             return min((A+1)*(B+1) , s*Bin(m+1,2) - s*Bin(m-B,2))) 
	 ) else (
	       if ( s == 5 and A == 5 and B == 4) then (
	             return 54
	       ) else (
	             return  min((A+1)*(B+1) , s*Bin(m+1,2) - s*Bin(m-B,2)))
          )
      )
\end{verbatim}
}

\section{Other defective cases}\label{appendix: defective}

\noindent We give an infinite family of defective cases for any multiplicity that is not covered from our previous computations.

\begin{proposition}
	Let $X = aQ_1 + bQ_2 + mP_1 + \ldots +mP_s$, where $a = (2m-1)(m-2),~b = m+1$, $m \geq 2$, and $s = 4m - 7$. Then, we have that 
	$\caL_{a+b}(X)$ is defective with defect $1$.
\end{proposition}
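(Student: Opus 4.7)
The plan is to exhibit a specific plane curve $C$ of degree $d := 2m-3$ which, by an iterated B\'ezout argument, is forced to appear in every member of $\caL_{a+b}(X)$ with multiplicity at least $2$, and then to show that removing $2C$ leaves a residual linear system whose dimension equals $\mathrm{virtual}(\caL_{a+b}(X)) + 1$, giving defect $1$.

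Take $C$ to be a plane curve of degree $d$ having a point of multiplicity $d-1 = 2m-4$ at $Q_1$ and passing simply through $Q_2$ and each $P_i$. The parameter count
\[
\binom{d+2}{2} - \binom{d}{2} - 1 - s = 2d - s = 2(2m-3) - (4m-7) = 1
\]
shows such a $C$ is unique up to scalar, for generic configurations. Existence and irreducibility follow by specializing $Q_2, P_1, \ldots, P_s$ to $s+1$ general points on a fixed irreducible rational curve of degree $d$ with a $(d-1)$-fold singularity at $Q_1$, combined with upper semicontinuity. Using the explicit values of $a, b, s$, one verifies that for any $D' \in \caL_{a+b}(X)$,
\[
\sum_P \mathrm{mult}_P(C) \cdot \mathrm{mult}_P(D') = a(2m-4) + b + ms = d(a+b) + 2,
\]
which exceeds the B\'ezout bound $d(a+b)$; hence $C \subseteq D'$. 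Iterating the same computation on the residual $\caL_{a+b-d}(X - C)$ (where the excess becomes $1$) further forces $C \subseteq D' - C$, so $D' = 2C + D''$ with $D'' \in \caL_{a+b-2d}(X - 2C)$. Therefore
\[
\dim \caL_{a+b}(X) = \dim \caL_{a+b-2d}(X - 2C),
\]
where $X - 2C = (2m-5)(m-2)\, Q_1 + (m-1)\, Q_2 + (m-2) \sum_{i=1}^{s} P_i$.

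A direct arithmetic check gives $\mathrm{virtual}(\caL_{a+b-2d}(X - 2C)) = \mathrm{virtual}(\caL_{a+b}(X)) + 1 = \frac{(m-3)(m-4)}{2} + 1$. Thus, provided $X - 2C$ imposes independent conditions in degree $a+b-2d = 2m^2 - 8m + 9$, the actual dimension of $\caL_{a+b}(X)$ exceeds its expected dimension by exactly $1$, as claimed.

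The main obstacle is verifying non-defectivity of the residual $X - 2C$. Via Lemma \ref{lem:multiproj-aff-proj}, this translates to showing that $s = 4m-7$ general $(m-2)$-fold points in $\PP^1 \times \PP^1$ impose independent conditions in bi-degree $((2m-5)(m-2),\, m-1)$. Since the multiplicity $m-2$ is strictly less than $\min\{(2m-5)(m-2),\, m-1\} = m-1$ for $m \geq 3$, this regime lies outside the scope of Theorem \ref{thm:main_any_m}. For small $m$ the residual reduces to known cases: for $m=2$, $X-2C$ reduces to $\caL_1(Q_2)$, trivially of dimension $2$; for $m=3$, to $5$ simple points in bi-degree $(1,2)$ (trivially non-defective); for $m=4$, to $9$ double points in bi-degree $(6,3)$, handled in \cite{CGG05}; and for $m=5$, to $13$ triple points in bi-degree $(15,4)$, covered by Theorem \ref{thm:main_triple_multi}. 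For $m \geq 6$, a further $(-1)$-curve reduction or a differential Horace-type specialization in the spirit of Section \ref{sec:4} would be required to uniformly establish non-defectivity of the residual.
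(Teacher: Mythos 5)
Your setup is sound and runs parallel to the paper's: you use the same curve $C$ of degree $2m-3$ with a $(2m-4)$-fold point at $Q_1$, the same B\'ezout count with excess $2$, and your twice-removed residual $X-2C$ in degree $a+b-2(2m-3)$ is exactly the scheme the paper denotes $X_{m-4}$ in its inductive Claim (the paper instead removes $C$ once and lands on $X_{m-3}$, but the two reductions are equivalent in spirit, and your arithmetic, including the excess $1$ at the second removal and the value $\tfrac{(m-3)(m-4)}{2}+1$ for the residual virtual dimension, checks out). The problem is that you have not proved the one statement that carries all the weight: that $4m-7$ general points of multiplicity $m-2$, together with the two big points, impose independent conditions in the residual degree. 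You correctly observe that this lies outside Theorem \ref{thm:main_any_m}, you verify it ad hoc for $m\le 5$, and then for $m\ge 6$ you only say that ``a further $(-1)$-curve reduction or a differential Horace-type specialization would be required.'' That is precisely the content that must be supplied, so as written the argument establishes the defect statement only for $m\le 5$; this is a genuine gap, not a technicality.

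The paper closes exactly this gap with an induction on the multiplicity: it considers the family $X_i = a_iQ_1+b_iQ_2+m_i(P_1+\cdots+P_s)$ with $m_i=2+i$, adds $m-3-i$ general simple points lying on the fixed curve $C$ so that B\'ezout again forces $C$ into the base locus, removes $C$ to pass from $X_i$ to $X_{i-1}$, and bounds the dimension using the cost of the added points; the base case $i=0$ is the double-point case settled in \cite[Proposition 2.1]{CGG05}. If you want to complete your version, you should run the same descent on your residual $X-2C$: keep peeling off $C$ (inserting the auxiliary general points on $C$ needed to keep it in the base locus at each stage) until the $P_i$ have multiplicity $2$, and then quote \cite{CGG05}. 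A smaller point: your existence/uniqueness/irreducibility argument for $C$ via specialization to a rational curve is only sketched; uniqueness for general position follows from your parameter count, but irreducibility deserves an explicit justification (e.g.\ that a curve of degree $d$ with a point of multiplicity $d-1$ and otherwise general base points is rational and irreducible), since the B\'ezout removal uses it.
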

\begin{proof}The expected dimension is
 $$
  \textit{exp}.\dim\caL_{a+b}(X) = (a+1)(b+1) - s{m+1 \choose 2} = \frac{(m-3)(m-4)}{2}.
 $$
  Now, consider the unique curve $C$ of degree $2m-3$ passing simply through all the points $Q_2,P_1,\ldots,P_s$ and with multiplicity $2m-4$ at $Q_1$. Then, for a general $C'\in\caL_{a+b}(X)$, we have
 \begin{align*}
  \deg(C \cap C') & = (2m-1)(m-2)(2m-4) + (m+1) + (4m-7)m, \\
  \deg(C)\deg(C') & = ((2m-1)(m-2)+(m+1))(2m-3). 
 \end{align*}
 Hence, we get that the curve $C$ is contained in the base locus of $\caL_{a+b}(X)$ because
 \begin{align*}
  \deg(C \cap C') - \deg(C)\deg(C') & = (4m-7)m-(m+1)(2m-4)-(2m-1)(m-2) = 2.
 \end{align*}
Then, we can remove it and we obtain
$$
	\dim\caL_{a+b}(X) = \dim\caL_{a'+b'}(X'),
$$ 
 where $X' = a'Q_1 + b'Q_2 + (m-1)P_1 + \ldots + (m-1)P_s$, with $a' = a - (2m-4) = 3m-6 + (m-3)(2m-4)$ and $b' = m$. The expected dimension is
 $$
 	(a'+1)(b'+1) - s{m \choose 2} = \frac{(m-3)(m-4)}{2} + 1.
 $$
 Therefore, it is enough to prove the following claim.
 
 \medskip
\noindent {\it Claim.} Let $m \geq 3$. Consider $X_i = a_iQ_1 + b_iQ_2 + m_iP_1 +\ldots + m_iP_s$, where $a_i = 3m-6 + i(2m-4),~b_i = 3+i,~m_i = 2+i$ and $s = 4m-7$, for any $0 \leq i \leq m-3$. Then, $\caL_{a_i+b_i}(X_i)$ has dimension as expected. 
 
 \medskip
 In particular, this concludes the proof because we have that $X'$ coincides with $X_{m-3}$.
 
\medskip
\noindent {\it Proof of Claim.}
	We proceed by induction on $i$. If $i = 0$, we conclude by \cite[Proposition 2.1]{CGG05}. Let $0 < i \leq m-3$. Consider again the curve $C$ and let $\widetilde{X}_i = X_i + A_1 +\ldots + A_{m-3-i}$, where the points $A_i$'s are general on the curve $C$. The expected dimension of $\caL_{a_i+b_i}(\widetilde{X}_i)$ is still positive. In fact, since $m \geq i+3$,
	\begin{align*}
		\textit{exp}.\dim\caL_{a_i+b_i}(\widetilde{X}_i) & = \textit{exp}.\dim\caL_{a_i+b_i}({X}_i) - (m-3-i) = \\ 
		& = \left(1+i(m-\frac{7}{2}) - \frac{1}{2}i^2\right) - (m-3-i) \geq {i \choose 2} + 1.
	\end{align*}
	 Now, for a general element $C' \in \caL_{a_i+b_i}(\widetilde{X}_i)$.
	\begin{align*}
		\deg(C\cap C') = a_i(2m-4) & + b_i + (4m-7)m_i + m - 3 - i = \\
		& = (2m-3)(a_i+b_i) + 1 = \deg(C)\deg(C') + 1.
	\end{align*}
	Hence, the curve $C$ is a fixed component and can be removed. Then, by induction, we have
	\begin{align*}
		\dim\caL_{a_i+b_i}(X_i) \leq & \dim\caL_{a_i+b_i}(\widetilde{X}_i) + (m - 3 - i) = \dim\caL_{a_{i-1}+b_{i-1}}(X_{i-1}) + (m-3-i) = \\ & = (a_{i-1}+1)(b_{i-1}+1) - (4m-7){m_{i-1} + 1 \choose 2} + (m-3-i) = \\
		& = \textit{exp}.\dim\caL_{a_i+b_i}(X_i).
	\end{align*}
	This concludes the proof.
\end{proof}
\bibliographystyle{alpha}
\bibliography{references}
\end{document}